\declaretheorem[numberwithin=section]{theorem}
\declaretheorem[sibling=theorem]{corollary}
\declaretheorem[sibling=theorem]{lemma}
\declaretheorem[sibling=theorem]{proposition}
\declaretheorem{question}
\declaretheorem{conjecture}
\declaretheorem[style=definition]{definition}
\declaretheorem[style=remark,numbered=no]{remark}
\declaretheorem[style=definition,numberwithin=section]{example}
\newcommand{\abs}[1]{\left\lvert #1 \right\rvert}
\newcommand{\set}[1]{\left\{ #1 \right\}}
\renewcommand{\P}{\mathbb P}
\newcommand{\C}{\mathbb C}
\newcommand{\Z}{\mathbb Z}
\newcommand{\fm}{\mathfrak m}
\newcommand{\cO}{\mathscr O}
\DeclareMathOperator{\im}{im}
\DeclareMathOperator{\rank}{rank}
\DeclareMathOperator{\Ass}{Ass}
\DeclareMathOperator{\len}{len}
\DeclareMathOperator{\Aut}{Aut}
\DeclareMathOperator{\Bl}{Bl}
\DeclareMathOperator{\codim}{codim}
\DeclareMathOperator{\id}{id}
\DeclareMathOperator{\Hilb}{Hilb}
\DeclareMathOperator{\mult}{mult}
\DeclareMathOperator{\Nef}{Nef}
\DeclareMathOperator{\GL}{GL}
\DeclareMathOperator{\Spec}{Spec}
\DeclareMathOperator{\Tor}{Tor}
\theoremstyle{remark}
\theoremstyle{definition}
\newcommand\nc{\newcommand}
\nc\on{\operatorname}
\newcommand\fp{{\mathfrak p}}
\newcommand\fq{{\mathfrak q}}
\nc\Hom{\on{Hom}}
\nc\Sections{\on{Sections}}
\nc\Specm{\on{Specm}}
\nc\ul{\underline}
\nc{\dfp}{\overset{\cdot}{\fp}}
\nc{\dfq}{\overset{\cdot}{\fq}}
\nc{\dfm}{\overset{\cdot}{\fm}}
\title[Dynamical {Mordell--Lang} and Automorphisms of Blow-ups]{Dynamical Mordell--Lang and \\ Automorphisms of Blow-ups} 
\author{John Lesieutre and Daniel Litt}
\begin{document}

\begin{abstract}
We show that if \(\phi : X \to X\) is an automorphism of a smooth projective variety and \(D \subset X\) is an irreducible divisor for which the set \(\set{d \in D : \text{$\phi^n(d) \in D$ for some nonzero $n$}}\) is not Zariski dense in $D$, then \((X, \phi)\) admits an equivariant rational fibration to a curve.  As a consequence, we show that certain blowups (e.g. blowups in high codimension) do not alter the finiteness of $\text{Aut}(X)$, extending results of Bayraktar--Cantat.  We also generalize results of Arnol'd on the growth of multiplicities of the intersection of a variety with the iterates of some other variety under an automorphism.

These results follow from a non-reduced analogue of the dynamical Mordell-Lang conjecture. Namely, let \(\phi : X \to X\) be an \'etale endomorphism of a smooth projective variety \(X\) over a field $k$ of characteristic zero.  We show that if \(Y\) and \(Z\) are two closed subschemes of \(X\), then the set \(A_\phi(Y,Z) = \set{n : \phi^n(Y) \subseteq Z}\) is the union of a finite set and finitely many residue classes, whose modulus is bounded in terms of the geometry of \(Y\).
\end{abstract}

\maketitle

\section{Introduction}
\label{introsection}

The aim of this paper is to study the group of biregular automorphisms of a projective variety \(X\), a basic invariant of \(X\).  The study of these groups naturally breaks up into the case of varieties of general type, which are well understood, and varieties not of general type.

Varieties of general type have finite birational and hence biregular automorphism groups; work of Hacon, McKernan, and Xu \cite{haconmckernanxu} gives bounds on their size.  On the other hand, varieties of smaller Kodaira dimension (and in particular rational varieties) may have wild birational automorphism groups, and little is known in general about possible groups of regular automorphisms.  For example, it seems to be unknown whether the group of components of \(\Aut(X)\) for \(X\) a rational surface is necessarily finitely generated.

As the birational automorphism group of a variety which is not of general type may be extremely large, it is natural to study the extent to which $\text{Aut}(X)$ can change under birational modifications of $X$, e.g. blowups.  One result of this paper is a sort of ``purity theorem'' for the automorphism group---namely, we show that its finiteness is not affected by blow-ups along smooth centers that have either large codimension or Picard rank \(1\).  These considerations naturally motivate a study of the dynamics of (exceptional) divisors under the action of an automorphism.  We study this via a generalization of the dynamical Mordell-Lang conjecture.  In our view, the main contribution of this paper is the insight that this (non-reduced) dynamical Mordell-Lang conjecture places serious restrictions on the automorphism groups of blowups of a variety and the dynamics of those automorphisms.

The paper proceeds as follows. First, in Sections~\ref{introsection} and \ref{examplesection}, we discuss the main results and illustrate them in the two-dimensional setting.
In Section \ref{mordelllang}, we formulate and prove a version of the dynamical Mordell--Lang conjecture for non-reduced schemes.  Sections~\ref{sectseparation} and \ref{sectglobalsep} then show how this result can be applied to study the orbits of divisors under infinite order automorphisms.  Finally, Sections \ref{secblowupautos} and \ref{highercodimstuff} explain how these results can be applied to questions about automorphisms of varieties: we extend results of Bayraktar--Cantat on automorphisms of blowups~\cite{bayraktarcantat} and show how some results of Arnol'd~\cite{arnold} can be generalized and understood as an instance of the non-reduced dynamical Mordell--Lang conjecture.

We first survey the main results. Throughout we work over an arbitrary algebraically closed field $k$, of characteristic zero. A variety is an integral scheme of finite type over \(k\), not assumed to be proper.  Many of our results hold without any properness assumptions, but some results requiring global geometric tools require projectivity.

The usual form of the Dynamical Mordell--Lang conjecture is the following.

\begin{conjecture}[Dynamical Mordell--Lang]
Suppose that \(\phi : X \to X\) is endomorphism of a quasiprojective variety \(X\) over \(k\), with \(V \subset X\) a subvariety and \(p \in X\) a point.  Then the set
\[
A_\phi(p,V) = \set{n : \phi^{n}(p) \in V}
\]
is a union of a finite set and a finite number of arithmetic progressions. 
\end{conjecture}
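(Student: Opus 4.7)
The plan is to follow the $\mathfrak{p}$-adic analytic approach introduced by Bell, Ghioca, and Tucker, which succeeds at least when $\phi$ is \'etale. The key insight is that, although the orbit $\set{\phi^n(p)}$ is a discrete subset of $X$, one can often interpolate it by a $\mathfrak{p}$-adic analytic family of points, after which zeros of analytic functions govern the return times.

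First, I would spread out $X$, $\phi$, $V$, and $p$ over a finitely generated $\Z$-subalgebra $R \subset k$ to obtain models $\mathcal{X}$, $\Phi$, $\mathcal{V}$, $\mathcal{P}$ over $\Spec R$. Choosing a sufficiently generic closed point $\Spec \mathbb{F}_q \hookrightarrow \Spec R$ yields reductions $X_0$, $\phi_0$, $V_0$, $p_0$ in good position, e.g.\ with $\phi_0$ \'etale at $p_0$. Since $X_0(\mathbb{F}_q)$ is finite and $\phi_0$ is a local isomorphism near $p_0$, some iterate $\phi_0^N$ fixes the reduction $\bar p$ of $p_0$. After partitioning $\Z$ into residue classes mod $N$ and replacing $\phi$ by $\phi^N$ on each, I may assume $\phi_0(\bar p) = \bar p$ from the outset.

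Next I would invoke the Bell--Ghioca--Tucker construction: using that $\phi$ is a local isomorphism at $\bar p$, one obtains via a formal exponential/logarithm a $\mathfrak{p}$-adic analytic interpolation $\gamma : \Z_{\mathfrak{p}} \to X(\C_{\mathfrak{p}})$ with $\gamma(n) = \phi^n(p)$ for every $n \in \Z$. The condition $\phi^n(p) \in V$ then translates into the simultaneous vanishing at $n$ of the pullbacks under $\gamma$ of local equations cutting out $V$ near $\bar p$---finitely many $\mathfrak{p}$-adic analytic functions on $\Z_{\mathfrak{p}}$. By Strassmann's theorem, each is either identically zero or has only finitely many zeros; intersecting these conditions, the return set within a single residue class is either the whole class or a finite set. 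Reassembling across residue classes gives the conclusion: a finite set together with finitely many arithmetic progressions.

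The main obstacle is that the statement as given does \emph{not} assume $\phi$ is \'etale. In this generality the conjecture remains open: the $\mathfrak{p}$-adic interpolation step genuinely requires invertibility of the derivative of $\phi$ along the orbit, and at ramified orbit points one does not get a power-series inverse to interpolate iterates. Thus my argument proves the conjecture only in the \'etale case, which is in any event the setting that the rest of the paper will need. Circumventing this restriction---e.g.\ by passing to an \'etale cover, restricting to an appropriate invariant subvariety, or controlling orbits that hit the ramification locus---is where the real difficulty of the general conjecture lies.
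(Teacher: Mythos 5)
This statement appears in the paper only as an open conjecture: the paper offers no proof of it in general, citing Bell--Ghioca--Tucker for the \'etale case, and you correctly identify that the non-\'etale case is genuinely open rather than something your argument merely fails to reach. Your sketch of the \'etale case --- spreading out over a finitely generated $\Z$-algebra, passing to an iterate of $\phi$ fixing the reduction of $p$ modulo a suitable prime, $p$-adic analytic interpolation of the orbit, and Strassmann's theorem applied to local equations of $V$ --- is exactly the strategy the paper itself deploys (in Lemma~\ref{localmordell-lang} and Proposition~\ref{localassociated}) for its non-reduced generalization, so both your conclusion and your method align with the paper's treatment.
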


When \(\phi\) is \'etale, the conjecture is a theorem of Bell--Ghioca--Tucker~\cite{bellghiocatucker}.  Our first result extends this result in the \'etale case to the setting in which \(p\) and \(V\) are replaced by arbitrary closed subschemes of \(X\).

\begin{restatable}{theorem}{thmmordelllang}
\label{nonreducedmordelllang}
Suppose that \(X\) is a smooth variety over \(k\), and that \(\phi : X \to X\) is an \'etale endomorphism of \(X\).  Let \(Y\) and \(Z\) be closed subschemes of \(X\).  Then the set \(A_\phi(Y,Z) = \set{n : \phi^n(Y) \subseteq Z}\)
is a union of a finite set and a finite number of arithmetic progressions. 
\end{restatable}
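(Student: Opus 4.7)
The plan is to adapt the $p$-adic analytic strategy of Bell--Ghioca--Tucker to the non-reduced setting, detecting the scheme-theoretic containment $\phi^n(Y)\subseteq Z$ via the vanishing of $\Z_p$-analytic functions. First I would localize: on an affine open cover with finitely many local generators $f$ of $\cI_Z$, the containment becomes $(\phi^n)^*(f)\in\cI_Y$ for every such $f$. Using the primary decomposition of $\cI_Y$, it suffices to analyze, for each primary component $\mathfrak{q}$ and each generator $f$, the set of $n$ with $(\phi^n)^*(f)\in\mathfrak{q}$; the critical case is when $\mathfrak{q}$ is $\mathfrak{m}_y$-primary for a closed point $y$.

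Next I would spread $(X,\phi,Y,Z)$ out to a model over a finitely generated $\Z$-subalgebra $R\subset k$ and pick a closed point of $\Spec R$ with residue field $\mathbb{F}_q$ of large characteristic $p$, so that the reduction $\bar\phi$ remains \'etale and the chosen local data survive. Pigeonhole on the finite set $\supp(\bar Y)(\mathbb{F}_q)$ and on the action of $\bar\phi$ on cotangent jets (valued in the finite group $\GL_d(\mathbb{F}_q)$) produces an integer $\ell\geq 1$ such that $\bar\phi^\ell$ fixes each closed point of $\supp(\bar Y)$ and induces the identity on its cotangent space modulo $p$. Partitioning $\Z$ into residue classes mod $\ell$ and replacing $\phi$ by $\phi^\ell$, I may assume $\bar\phi$ fixes a given closed point $\bar y\in\supp(\bar Y)$ with $d\bar\phi_{\bar y}\equiv\id\pmod p$. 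The modulus $\ell$ is then controlled by $|\supp(\bar Y)(\mathbb{F}_q)|$ and the jet-group order, matching the quantitative claim in the abstract.

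\'Etaleness now furnishes the $p$-adic interpolation: there is a $\Z_p$-analytic family
\[
\Phi_y:\Z_p\to\Aut_{\text{cts}}(\widehat{\cO}_{X,y})
\]
with $\Phi_y(n)=(\phi^n)^*$ for $n\in\Z$, obtained via a convergent $p$-adic logarithm on the formal automorphism group of $\widehat{\cO}_{X,y}$. When $\mathfrak{q}$ is $\mathfrak{m}_y$-primary, $\widehat{\cO}_{X,y}/\mathfrak{q}\widehat{\cO}_{X,y}$ is a finite free $\Z_p$-module, and $n\mapsto\Phi_y(n)(f)\bmod\mathfrak{q}\widehat{\cO}_{X,y}$ is a $\Z_p$-analytic function valued in it. Strassmann's theorem, applied coordinate-wise, implies each such function either vanishes identically on $\Z_p$ or has only finitely many zeros; intersecting finitely many such zero loci with $\Z$ yields the desired decomposition into a finite set and finitely many arithmetic progressions with common difference dividing $\ell$.

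The main technical obstacle is the interpolation step in the non-reduced and possibly positive-dimensional setting. The argument above is clean when $\mathfrak{q}$ is $\mathfrak{m}_y$-primary, but for primary components whose associated prime is not maximal (i.e., $Y$ has positive-dimensional components), the quotient $\widehat{\cO}_{X,y}/\mathfrak{q}\widehat{\cO}_{X,y}$ is no longer a finite $\Z_p$-module and Strassmann does not apply directly. Overcoming this likely requires a uniformity argument across closed points of the relevant component (exploiting density of closed points in a Jacobson scheme while controlling the interpolations uniformly) or an extension of Strassmann to analytic functions valued in complete topological $\Z_p$-modules. The \'etale hypothesis is essential throughout: it makes $d\phi_y$ invertible and ensures convergence of the $p$-adic logarithm, without which the interpolation could not exist.
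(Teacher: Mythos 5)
Your overall route is the paper's route (spread out to a $p$-adic model, make the reduction of $\phi$ fix the relevant points and act as the identity on jets, interpolate $\phi^{Nr}$ $p$-adically, apply Strassmann), but the step you flag as ``the main technical obstacle'' is a genuine gap, and the fix you gesture at in your first option points in the wrong direction. The missing idea is the following purely algebraic reduction: if $Y$ has a \emph{single associated point}, then for \emph{any} closed point $y\in Y$ the composite $\mathscr{O}_Y\to\mathscr{O}_{Y,y}\to\widehat{\mathscr{O}_{Y,y}}$ is injective (the first map because there are no other associated points, the second by Krull's intersection theorem), so $Y\subseteq Z$ holds iff it holds after passing to the formal neighborhood of that one closed point, i.e.\ iff every infinitesimal thickening $V(\mathfrak{m}_y^n+\mathscr{I}_Y)$ lies in $Z$. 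The general case follows by induction on the number of associated points, peeling off the part of $\mathscr{O}_Y$ supported on the closure of one associated point at a time. No density or uniformity argument over all closed points of a component is needed: one well-chosen closed point per associated point suffices, and at such a point the residue field is finite, so your interpolation applies.

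Your second worry — that the target of the analytic function is not a finite free $\mathbb{Z}_p$-module — is real but is handled differently from what you suggest: one does not need $\widehat{\mathscr{O}}_{X,y}/\mathfrak{q}$ to be finite. Instead, declare a function valued in $R[[x_1,\ldots,x_n]]$ (or in $\mathscr{O}_Y$ completed at $y$) to be analytic if its composition with every continuous $R$-linear form is analytic, apply Strassmann to each such coordinate separately, and then observe that the zero locus is the intersection of the coordinatewise zero loci. Since each of these is $N$-periodic semilinear with the \emph{same} $N$ (coming from the order of the affine group over the residue field and the ramification), an arbitrary — even infinite — intersection of $N$-periodic semilinear sets is again $N$-periodic semilinear; this small combinatorial lemma is what replaces finiteness of the module. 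As written, your proposal neither performs the reduction to a single closed point per associated point nor justifies the infinite intersection, so the positive-dimensional (and embedded-point) case of the theorem is not established.
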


This was proved by Bell--Lagarias for affine \(X\)~\cite{belllagarias}.  For our geometric applications, it will be important to have some control over how the lengths of the arithmetic progressions appearing in \(A_\phi(Y,Z)\) depend on \(Y\) and \(Z\).  

\begin{restatable}{theorem}{thmmordelllangpds}
\label{nonreducedmordelllangpds}
Let \(X\), \(\phi\), and \(Z\) be as in Theorem~\ref{nonreducedmordelllang}.
Suppose that \(Y_1^\circ,\ldots,Y_r^\circ\) is a finite set of closed subschemes of \(X\), defined by ideal sheaves $\mathscr{I}_{Y_i^\circ}$, and that \(\set{Y_j}\) is an infinite collection of subschemes defined by ideal sheaves of the form \[\sum_{i=1}^r \mathscr{I}_{Y_i^\circ}^{n_i},\] with $n_i\geq 0$.  Let \(\Xi\) be the set of all associated points of the \(Y_j\).
\begin{enumerate}
\item Suppose \(x\) is a point of \(X\) with \(x \in \bar{\xi}\) for only finitely many \(\xi \in \Xi\).  Then there exists a constant \(N = N(Y_j,Z,x)\) (independent of \(j\)) so that the set of \(n\) for which \(\phi^n(Y_j) \subseteq Z\) in $\on{Spec}(\mathscr{O}_{X,x})$ can be expressed as the union of arithmetic progressions of length bounded by \(N\) and a finite set.
\item Suppose that the set of associated points \(\Xi\) is finite.  Then there exists a constant \(N = N(\set{Y_j},Z)\) such that the lengths of the arithmetic progressions appearing in \(A_\phi(Y_j,Z)\) are bounded by $N$ (and in particular are independent of $j$).
\end{enumerate}
\end{restatable}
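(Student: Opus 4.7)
My plan is to revisit the proof of Theorem~\ref{nonreducedmordelllang} and track how the modulus of the arithmetic progressions arising in $A_\phi(Y,Z)$ depends on the input $Y$. The Bell--Ghioca--Tucker/Bell--Lagarias strategy spreads $(X,\phi,Y,Z)$ out to an integral model over a finitely generated subring $R$ of $k$, picks a prime $\mathfrak{p}$ of $R$ of good reduction, and chooses a positive integer $N$ so that $\phi^N$ fixes each associated point of $Y$ modulo $\mathfrak{p}$ and is congruent to the identity on a suitable formal neighborhood of each such point modulo $\mathfrak{p}$. For each residue class $a \pmod N$, the iteration $n \mapsto \phi^{a+nN}$ then extends $p$-adic-analytically to $\BZ_p$, and the containment $\phi^{a+nN}(Y) \subseteq Z$ reads, at each associated point of $Y$, as the vanishing of a $p$-adic analytic family in $n$, which either vanishes identically or has only finitely many zeros. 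This is what produces the decomposition of $A_\phi(Y,Z)$ into a finite set and finitely many arithmetic progressions of modulus $N$.

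The key observation for the present statement is that $N$ constructed this way depends only on $(X,\phi,\mathfrak{p})$ and on the finite set of associated points of $Y$ in the model, and not on the specific ideal $\mathscr{I}_Y$: the formal neighborhood $\widehat{\cO}_{X,\xi}$ and the action of $\phi$ on it are intrinsic, so any two subschemes whose associated points lie in a common finite set admit a uniform choice of $N$. This immediately yields the two parts. For Part~(1), fix $x$ and let $\xi_1,\dots,\xi_m$ be the (finitely many) $\xi \in \Xi$ with $x \in \bar{\xi}$. Every associated point of any $Y_j$ specializing to $x$ lies in $\set{\xi_1,\dots,\xi_m}$, and $\phi^n(Y_j)\subseteq Z$ in $\Spec(\cO_{X,x})$ can be checked at these associated points; applying the strategy with this fixed finite set of points produces an $N$ depending on $Y_i^\circ$, $Z$, and $x$ but not on $j$. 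For Part~(2), $\Xi$ is already finite, so a single $N$ suffices at every associated point of every $Y_j$, giving a global uniform bound $N=N(\set{Y_j},Z)$.

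The main obstacle is verifying that the $p$-adic interpolation of $\phi^{nN}$ on $\widehat{\cO}_{X,\xi}$ is genuinely functorial in ideals, so that every ideal of the form $\sum_i \mathscr{I}_{Y_i^\circ}^{n_i}$ supported at $\xi$ is handled uniformly. Concretely, once $\phi^{nN}$ is realized as a $p$-adic analytic family of formal endomorphisms of $\widehat{\cO}_{X,\xi}$, the pullback $(\phi^{nN})^*\mathscr{I}_Z$ becomes a $p$-adic-analytic family of ideals in $\widehat{\cO}_{X,\xi}$, and its containment in any prescribed ideal $\mathscr{I}_{Y_j}\cdot\widehat{\cO}_{X,\xi}$ translates to the vanishing of a $p$-adic analytic function of $n$ extracted by finitely many $\widehat{\cO}_{X,\xi}$-linear functionals. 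Establishing this family-level interpolation is the technical heart of the argument; once it is in place, the reduction from ``variable $Y_j$'' to a single modulus $N$ is essentially automatic from the finiteness of the relevant subset of $\Xi$, locally in Part~(1) and globally in Part~(2).
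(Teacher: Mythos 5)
Your proposal is correct and follows essentially the same route as the paper: spread out to an integral model, reduce the containment \(\phi^n(Y_j)\subseteq Z\) to formal neighborhoods of the (finitely many relevant) associated points of \(Y_j\), and observe that the period produced by the \(p\)-adic interpolation depends only on the reduction data and the associated points, not on the particular ideals. The only imprecision is your appeal to ``finitely many linear functionals'': the vanishing condition is naturally an intersection over all \(h\in\mathscr I_Z\) (equivalently, all continuous linear functionals), and the paper handles this via Lemma~\ref{semilinearintersect}, which shows that an arbitrary intersection of \(N\)-periodic semilinear sets is again \(N\)-periodic semilinear.
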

\begin{remark}
The behavior of the periods \(A_\phi(Y_j,Z)\) can be quite subtle when the set \(\set{Y_j}\) is infinite, and some hypotheses on associated points are necessary for (2) to hold. In Example~\ref{supportnotenough} below, we exhibit an automorphism \(\phi : X \to X\), a subscheme \(Z \subset X\), and an infinite set of closed subschemes \(Y_j\) which all have the same support, but for which the periods of \(A_\phi(Y_j,Z)\) grow without bound due to the presence of embedded points.
\end{remark}

We are primarily motivated by geometric applications of Theorem~\ref{nonreducedmordelllangpds}, which does not have an analog in the setting where \(Y\) and \(Z\) are reduced.  The uniform bound on the period makes it possible to control the multiplicities of the intersection of a subvariety \(V\) with the iterates \(\phi^n(V)\) under an automorphism.

If \(x\) is the generic point of a codimension \(2\) variety of \(X\), then the hypotheses of point (1) hold automatically: an associated point of \(Y_j\) whose closure contains \(x\) must be either \(x\) itself, or the generic point of a divisor containing \(x\).  But only finitely many divisors appear among the support of the \(Y_j\).
However, in general the set \(\Xi\) need not be finite; see Example~\ref{manyassociatedpoints} below.  Nevertheless, this hypothesis is satisfied in a number of natural geometric settings, discussed in Theorem~\ref{finiteassociate}.

The first geometric application of these results is:
\begin{restatable}{theorem}{thmlocalsep}
\label{separationthm}
Let \(X\) be a smooth variety over \(k\) and \(\phi : X \to X\) an automorphism.  Suppose that \(D \subset X\) is an irreducible divisor, containing an irreducible codimension \(2\) closed subset \(V \subset X\) with \(\phi(V) = V\).  If $D$ is not $\phi$-periodic, then there exists a projective birational morphism \(\pi  : Y \to X\) such that:
\begin{enumerate}
\item \(Y\) is smooth;
\item \(\pi\) is an isomorphism away from \(V\);
\item some iterate of \(\phi\) lifts to an automorphism \(\psi : Y \to Y\);
\item \(\pi(\psi^m(\tilde{D}) \cap \psi^n(\tilde{D}))\) does not contain \(V\) for any \(m \neq n\). 
\end{enumerate}
\end{restatable}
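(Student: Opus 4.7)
The plan is to construct $\pi : Y \to X$ as a sequence of $\phi$-equivariant blowups supported on $V$, with the total number of blowups controlled by Theorem~\ref{nonreducedmordelllangpds}(1) applied at the generic point $\eta_V$.

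First I would translate the separation condition into a local statement.  Since $X$ is smooth and $V$ has codimension two, $R = \mathscr{O}_{X,\eta_V}$ is a two-dimensional regular local ring.  Let $f \in R$ generate the local ideal of $D$, and let $f_n = (\phi^{-n})^*(f)$ generate the local ideal of $\phi^n(D)$.  Define the contact order $J(n) = \sup\{\, j \geq 0 : f \in (f_n) + \mathfrak{m}_V^j\,\}$, which measures how closely $\phi^n(D)$ approximates $D$ along $V$.  I would then apply Theorem~\ref{nonreducedmordelllangpds}(1) with $x = \eta_V$, seed subschemes $Y_1^\circ = D$ and $Y_2^\circ = V$, subschemes $Y_j$ defined by $\mathscr{I}_D + \mathscr{I}_V^j$, and target $Z = D$.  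Only finitely many associated points of the $Y_j$ specialize to $\eta_V$, so the hypothesis is met, and the condition $\phi^n(Y_j) \subseteq D$ in $\Spec(R)$ unwinds to $J(n) \geq j$.  The theorem then supplies a period $N$, independent of $j$, for the descending chain $S_j := \{\, n : J(n) \geq j\,\}$.

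Because $D$ is not $\phi$-periodic, Krull's intersection theorem in $R$ combined with the irreducibility of $D$ forces $J(n) < \infty$ for $n \neq 0$, so $\bigcap_j (S_j \setminus \{0\}) = \emptyset$.  Since only finitely many arithmetic progressions of period $\leq N$ are available, a pigeonhole argument produces an integer $J^*$ with $J(n) \leq J^*$ for all $n \neq 0$.  With this uniform bound I would build $\pi$ iteratively: set $X_0 = X$, $\phi_0 = \phi$, and at stage $i$ let $W_i \subset X_i$ be the reduced union of the finitely many codimension-two irreducible subvarieties dominating $V$ that lie in $\phi_i^m(\tilde D^{(i)}) \cap \phi_i^n(\tilde D^{(i)})$ for some $m \neq n$.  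Since $\phi_i$ permutes the strict transforms of the $\phi$-iterates of $D$, $W_i$ is $\phi_i$-invariant, so $\phi_i$ lifts to $\phi_{i+1}$ on $X_{i+1} = \Bl_{W_i} X_i$.  A local computation in $\widehat R \cong K[[x,y]]$ shows that each blowup strictly decreases the pairwise contact order at every surviving shared component; combined with $J^*$ and the constancy of the multiplicity of $D$ along $V$ under iteration by $\phi$, this forces termination in finitely many stages, yielding the desired $Y$ and $\psi$.

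The principal obstacle is ensuring termination.  Without the Mordell--Lang input, each blowup could uncover new shared components drawn from an ever-growing pool of iterates of $D$, and the process might not terminate.  A related subtlety is that $D$ and some $\phi^n(D)$ could a priori share a formal analytic branch at $V$, which no algebraic blowup can separate; this pathology is ruled out by the same uniform contact bound, since it would force $J(n) = \infty$ for some $n \neq 0$, contradicting the non-periodicity hypothesis.
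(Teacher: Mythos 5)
Your first half is essentially the paper's: the sets $S_j$ are the paper's $A_k$, the uniform period $N$ comes from Theorem~\ref{nonreducedmordelllangpds}(1) at $\eta_V$ exactly as in the paper, and your derivation of a uniform contact bound $J^*$ (some $S_k$ must be finite, else a residue in $\{1,\dots,N\}$ lies in every $S_j$ and $J(n)=\infty$ contradicts non-periodicity via Krull) matches Lemma~\ref{eventuallyfinite}. The gap is in the construction of the blow-up. Your center $W_i$ --- the union of \emph{all} codimension-two components over $V$ of $\phi_i^m(\tilde D)\cap\phi_i^n(\tilde D)$ over all pairs $m\neq n$ --- need not be Zariski-closed. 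Already after the first blow-up $\Bl_V X$, if $D$ is singular at $\eta_V$ its strict transform meets the exceptional locus over $\eta_V$ in several points $q_1,\dots,q_s$, and if $q_a=\phi_1^k(q_b)$ for some $a\neq b$, $k\neq 0$, with $q_b$ of infinite orbit, then every point $\phi_1^m(q_a)$ is shared by two iterates of $\tilde D$, so $W_1$ has infinitely many components. This is precisely the ``first issue'' the paper isolates in Section~\ref{examplesection}, and it is not ruled out by the bound $J^*$, which only controls contact along $V$ itself, not the combinatorics of branches upstairs. The paper's fix is to pass to a further iterate at each stage so that the sets $A_{\psi}(W_i,W_j)$ become trivial (Lemma~\ref{semilineariterate}), after which only individually $\psi$-invariant components persist and can be blown up; you never re-invoke semilinearity upstairs, so your lift of $\phi_i$ to $X_{i+1}$ is unjustified in the singular case.

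The second gap is termination. You assert that ``a local computation in $K[[x,y]]$ shows each blowup strictly decreases the pairwise contact order,'' but when $D$ has multiplicity $\geq 2$ along $V$ the quantity $J(n)$ does not descend in a controlled way under point blow-ups, and the bound $J^*$ alone does not bound the length of the process. The paper separates the two difficulties: Lemma~\ref{regularizedivisor} first reduces (by equivariant blow-ups whose termination rests on embedded resolution of plane curve singularities, \cite[Lemma 1.68]{kollarresolution}, not on the contact bound) to the case where $\tilde D$ is smooth at the generic point of each invariant center; then in the smooth case no iteration is needed at all --- after replacing $\phi$ by an iterate so that $\mathscr I_{D,V}+\mathscr I_{\phi^n(D),V}=\mathscr I_{D,V}+\mathfrak m_V^{k+1}$ for all $n\neq 0$ (Lemma~\ref{localpartregular}, which uses that $\mathscr O_{X,V}/\mathscr I_{D,V}$ is a DVR), a \emph{single} blow-up along the invariant non-reduced subscheme $W$ defined by this ideal separates all iterates at once via the universal property of blowing up. Your outline would likely go through if you added the smoothness reduction and re-applied the semilinearity/iterate argument at each stage; as written, both the well-definedness of the centers and the termination are unproved in exactly the singular situation that forces the paper's Lemma~\ref{regularizedivisor}.
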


(Here $\tilde D$ is the strict tranform of $D$ on \(Y\).)

If the intersections of \(D\) with the divisors \(\phi^n(D)\) are a proper Zariski closed subset of \(D\), repeated applications of this local separation result make it possible to blow up \(X\) several times and simultaneously make the strict transforms of all of the divisors \(\phi^n(D)\) disjoint.  Global geometric considerations then yield:

\begin{restatable}{theorem}{thmglobalsep}
\label{mainblowup}
Let \(X\) be a smooth projective variety  and \(\phi : X \to X\) an automorphism.  Suppose that \(D \subset X\) is an irreducible divisor for which the set
\[
V(D) = \set{ d \in D: \text{$\phi^n(d) \in D$ for some nonzero $n$} } = \bigcup_{n \neq 0} D \cap \phi^n(D)
\]
is a proper, Zariski-closed subset of $D$.  Then there exists a projective birational morphism \(\pi : Y \to X\) such that
\begin{enumerate}
\item \(Y\) is smooth;
\item some iterate of \(\phi\) lifts to an automorphism \(\psi : Y \to Y\);
\item the divisors \(\psi^n(\tilde{D})\) are pairwise disjoint;
\item there exists a curve \(C\), a morphism \(f : Y \to C\), and an automorphism \(\tau : C \to C\) such that \(f \circ \psi = \tau \circ f\). 
\end{enumerate}
Moreover, the normal bundle of \(D\) satisfies \(H^0(D,N_{D/X}) > 0\) and \(D\) moves in a positive-dimensional family on \(X\).
\end{restatable}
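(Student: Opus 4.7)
My strategy is to iterate the local separation result (Theorem~\ref{separationthm}) to build a smooth projective birational morphism $\pi : Y \to X$ on which the iterates $\psi^n(\tilde D)$ are pairwise disjoint, and then to extract the fibration from this infinite family. Since $V(D)$ is a proper Zariski-closed subset of $D$, its finitely many irreducible components $V_1, \ldots, V_s$ have codimension $\geq 2$ in $X$. Applying Theorem~\ref{nonreducedmordelllang} to $V_i$ and $Z = D$ shows that the return set $\{k : \phi^k(V_i) \subseteq D\}$ contains $0$ and a nonzero integer, hence an arithmetic progression. Infinitely many iterates $\phi^{kp}(V_i)$ therefore lie in $V(D)$, and as each such iterate is itself an irreducible codimension-$2$ subvariety sitting inside $V(D)$, finiteness of the component set forces some further iterate to return to $V_i$. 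After replacing $\phi$ by a suitable power (which by the same argument leaves the set $V(D)$ unchanged), I may assume $\phi(V_i) = V_i$ for every $i$.

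I then apply Theorem~\ref{separationthm} along each $V_i$ and paste the local blowups into a single smooth projective birational $\pi_1 : Y_1 \to X$, passing to a further iterate of $\phi$ so that all these local modifications lift simultaneously. One round need not empty the intersection locus $\bigcup_{m \neq n} \pi_1(\psi_1^m(\tilde D) \cap \psi_1^n(\tilde D))$, but conclusion~(4) of Theorem~\ref{separationthm} ensures it no longer contains any $V_i$. Using Theorem~\ref{nonreducedmordelllangpds} to bound intersection periods uniformly along the chain and running a Noetherian induction on the dimensions and embedded multiplicities of the components of this bad locus, the procedure terminates in finitely many steps, yielding the model $\pi : Y \to X$ and an iterate $\psi$ of $\phi$ satisfying conclusions (1)--(3). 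Globalizing the local separations compatibly, especially when the $V_i$ meet, is the main obstacle I anticipate.

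On $Y$ I now have an infinite family of pairwise disjoint prime divisors $\{\psi^n(\tilde D)\}_{n \in \ZZ}$, which I use to construct the fibration. Under the tameness of the dynamics implicit in the hypothesis $V(D) \subsetneq D$ (which rules out the unbounded-degree behavior of positive-entropy automorphisms), infinitely many of these divisors lie in a single component $T$ of $\Hilb(Y)$; since $T$ contains infinitely many closed points it is positive-dimensional, and pairwise disjointness of its members forces the universal family $\mathcal{D} \subset Y \times T$ to be generically injective to $Y$, so $\dim T = 1$. A Stein factorization of the evaluation morphism produces $f : Y \to C$ with the $\psi^n(\tilde D)$ among its fibers, and $\psi$ descends to the required automorphism $\tau : C \to C$ via its action on the fibers. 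Pushing the resulting one-parameter family of fibers forward by $\pi$ yields a positive-dimensional algebraic family of divisors on $X$ containing $D$, giving both the moving statement and $H^0(D, N_{D/X}) > 0$.
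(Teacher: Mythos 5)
Your overall architecture matches the paper's: reduce to invariant codimension-two centers, iterate Theorem~\ref{separationthm} to disjoin the orbit of $D$, then read off the fibration from the infinite disjoint family. But three steps as written do not go through. First, your reduction to $\phi(V_i)=V_i$ for \emph{every} component rests on the claim that the return set $\set{k:\phi^k(V_i)\subseteq D}$ ``contains $0$ and a nonzero integer, hence an arithmetic progression.'' That is a non sequitur: Theorem~\ref{nonreducedmordelllang} only says this set is a finite set union finitely many progressions, and a two-element set is consistent with that. Moreover the conclusion is too strong: a component whose return set is finite never becomes invariant; it simply drops out of $V(D,\phi^N)$ for a suitable iterate. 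The paper instead isolates \emph{one} component that lies in $D\cap\phi^{n_i}(D)$ for an unbounded sequence $n_i$ (such a component exists by pigeonhole, unless the divisors are already eventually disjoint, in which case one is done), for which the return set really is infinite, and then uses Lemma~\ref{stepsize} together with pigeonhole on the finitely many components to make that single $V$ invariant.

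Second, your termination argument (``Noetherian induction on the dimensions and embedded multiplicities'') is not yet an argument, and you yourself flag the difficulty of globalizing when the $V_i$ meet. The paper sidesteps this: it inducts on the \emph{number} of components of $V(D,\phi)$, treating one invariant $V$ per step and checking that every codimension-two component of $V(\tilde D,\psi)$ pushes forward into a component of $V(D,\phi)$ other than $V$, so the count strictly drops. Third, and most seriously, the fibration is not a consequence of any ``tameness implicit in $V(D)\subsetneq D$'': a priori the classes $[\psi^n(\tilde D)]$ could be pairwise distinct in $N^1(Y)$ (e.g.\ if $\psi^*$ has infinite order on $N^1$), in which case the divisors lie in different components of $\Hilb(Y)$ and your construction never starts. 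The statement that an infinite pairwise disjoint family of irreducible divisors on a normal projective variety must sit inside the fibers of a morphism to a curve is exactly Proposition~\ref{propdisjoint} (\cite[Theorem 1.1]{disjointdivisors}), a nontrivial geometric input that the paper invokes explicitly; you are implicitly assuming its conclusion. Once that is granted, your Hilbert-scheme argument for descending $\psi$ to $\tau$ on $C$, and the deduction of $H^0(D,N_{D/X})>0$ from $\tilde D$ moving in a fiber, do agree with Corollary~\ref{disjointdivsapp}.
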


Theorem~\ref{mainblowup} can then be used to address concrete questions about automorphisms of varieties constructed using sequences of blow-ups.

\begin{restatable}{theorem}{thmautomorphisms}
\label{generalblowup}
Suppose that \(X\) is a smooth projective variety of dimension \(n\), and that \(Y\) is a variety constructed by a sequence of smooth blow-ups of \(X\).  If each blow-up \(\pi_i : X_{i+1} \to X_i\) in the sequence, with exceptional divisor \(E\) and center of dimension \(r = \dim \pi_i(E)\), satisfies either
\begin{enumerate}
\item \(2r + 3 \leq n\), or
\item \(r + 3 \leq n\) and \(\Nef(E)\) is a polyhedral cone,
\end{enumerate}
then there exists an integer $N \geq 1$ such that for any automorphism $\phi$ of \(Y\), $\phi^N$ descends to an automorphism of $X$.
\end{restatable}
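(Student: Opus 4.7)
The plan is to induct on the length $k$ of the blow-up sequence $X = X_0 \leftarrow X_1 \leftarrow \cdots \leftarrow X_{k+1} = Y$. It suffices to produce $N = N(\pi_k)$ such that every $\phi \in \mathrm{Aut}(Y)$ satisfies $\phi^N(E) = E$, where $E$ is the exceptional divisor of the last blow-up $\pi_k : Y \to X_k$ with smooth center $B$ of dimension $r$; then $\phi^N$ descends to $X_k$, and the inductive hypothesis applied to $X_k \to X$ yields a further iterate descending all the way. To upgrade per-$\phi$ finiteness of the orbit of $E$ to a uniform bound, I would use that $\mathrm{Aut}(Y)$ acts on the finitely generated group $\mathrm{Pic}(Y)$, and that the $\mathrm{Aut}(Y)$-orbit of $[E]$ consists of classes of divisors $D$ with $H^0(N_{D/Y})=0$ (an $\mathrm{Aut}(Y)$-invariant condition, since $\phi_*N_{E/Y} = N_{\phi(E)/Y}$); combined with standard stabilizer arguments this yields the uniform $N$.

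Fix $\phi$ and assume for contradiction that $\{\phi^n(E)\}_{n\in\mathbb{Z}}$ is an infinite family of distinct divisors. I would apply Theorem~\ref{mainblowup} to $D = E$. The first key fact is rigidity of $E$: since $E \cong \PP(\mathscr{N}_{B/X_k}^\vee)$ is a $\PP^{n-r-1}$-bundle over $B$ with $N_{E/Y} \cong \mathscr{O}_E(-1)$, and both hypotheses (1) and (2) force $n - r \geq 3$, the fibers of $\pi_k|_E$ have dimension $\geq 2$ and $(\pi_k|_E)_*\mathscr{O}_E(-1) = 0$, so $H^0(E, N_{E/Y}) = 0$. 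Thus $E$ does not move in a positive-dimensional family, and by the contrapositive of the last assertion of Theorem~\ref{mainblowup} the set $V(E) = \bigcup_{n\neq 0} E \cap \phi^n(E)$ must be Zariski-dense in $E$.

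The main obstacle is to derive a contradiction from the Zariski-density of $V(E)$, and this is where the two alternative hypotheses enter. For case (2), I would use polyhedrality of $\mathrm{Nef}(E)$, and thus of $\Effb(E)$ by duality in the polyhedral setting, to argue that the effective restrictions $[\phi^n(E)|_E] \in N^1(E)$ all lie in a finitely generated cone; combined with the action of $\phi^*$ on $N^1(Y)$ and the intersection-theoretic pairing against ample classes, this forces only finitely many numerical classes $[\phi^n(E)]$, contradicting the distinctness (and Zariski density) of the divisors in the orbit. For case (1), where $2r + 3 \leq n$, I would restrict the whole configuration to a general fiber $F \cong \PP^{n-r-1}$ of $\pi_k|_E$, so that density of $V(E)$ forces $F \cap \phi^n(E)$ to be a nonempty hypersurface in $F$ for infinitely many $n$; applying the uniform version Theorem~\ref{nonreducedmordelllangpds} to the collection of subschemes $\phi^n(E) \cap F$ and comparing degrees via the one-dimensional ample cone of $F$, the numerical gap provided by $n - r \geq r + 3$ should prevent the degrees of these hypersurfaces from being bounded, again contradicting the constraints imposed by $\phi^*$ acting on $N^1(Y)$. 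I expect the delicate step to be case (1): verifying that the codimension inequality $2r + 3 \leq n$ is exactly what makes the slicing-and-counting argument work, and confirming that the resulting density of codimension-two cycles on $Y$ indeed blows up the relevant numerical invariants.
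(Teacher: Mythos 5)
Your skeleton (reduce to showing each exceptional divisor is $\phi$-periodic with uniformly bounded period, then descend step by step) matches the paper's, and your observation that $H^0(E,N_{E/Y})=0$ rules out the conclusion of Theorem~\ref{mainblowup} is exactly the right lever for case (2). But the core of the argument --- how one actually derives the contradiction --- is where your proposal goes wrong, in both cases. For case (2), you have the logic pointing the wrong way: Theorem~\ref{mainblowup} requires as \emph{hypothesis} that $V(E)=\bigcup_{n\neq 0}E\cap\phi^n(E)$ is a proper Zariski-closed subset of $E$, so the work is to \emph{establish} that containment, after which $H^0(E,N_{E/Y})>0$ gives the contradiction. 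The paper does this via Lemma~\ref{cantintersect}(2): since $r+3\leq n$, for any $x\in E\cap \phi^m(E)$ the fiber of $\pi\circ\phi^{-m}$ through $x$ meets $E$ in a positive-dimensional set, so the divisor $E\cap\phi^m(E)\subset E$ is contracted by $\pi\circ\phi^{-m}|_E$; each such divisorial contraction of $E$ corresponds to a codimension-one face of $\Nef(E)$, and polyhedrality gives finitely many faces, hence finitely many possible divisors $W_i$ containing all the intersections. Your proposed use of polyhedrality --- that it forces only finitely many numerical classes $[\phi^n(E)]$ in $N^1(Y)$ --- is not correct: nothing bounds the classes $(\phi^*)^{-n}[E]$ (they are genuinely unbounded in examples such as Example~\ref{ellipticfibr}), and in any case finiteness of classes would contradict distinctness of the divisors rather than density of $V(E)$.

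For case (1) you are also missing the actual mechanism, which is much more elementary than your slicing-and-degree-counting sketch (and does not use Theorem~\ref{mainblowup} or Theorem~\ref{nonreducedmordelllangpds} at all). When $2r+3\leq n$, two distinct exceptional divisors of blow-ups along $\leq r$-dimensional centers must be \emph{disjoint}: if two fibers $F\cong\P^{n-r-1}$ and $F'\cong\P^{n-r-1}$ meet, they meet in a curve by the dimension count $2(n-r-1)-n\geq 1$, and one cannot contract a curve in $\P^{n-r-1}$ without contracting all of it. Pairwise disjointness plus $E_i\cdot f_i=-1$, $E_i\cdot f_j=0$ makes the classes $[E_i]$ linearly independent in $N^1$, so only finitely many such divisors exist --- an immediate contradiction with the infinitude of the orbit. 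Two smaller gaps: (i) $\phi^N(E)=E$ does not by itself let $\phi^N$ descend; you need $\phi^N$ to permute the fibers of $E\to B$, which costs a further iterate $\rho(E)$ via Wi\'sniewski's bound on the number of projective-bundle structures (Lemma~\ref{periodicexceptional}); (ii) your ``standard stabilizer arguments'' for the uniform $N$ need to be the uniform Skolem--Mahler--Lech statement for the integral matrix $\phi^*$ on $N^1$ (Corollary~\ref{uniformmsl}) combined with rigidity of $E$ in its class, since the periods coming from Theorem~\ref{nonreducedmordelllang} depend on the field of definition.
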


This was proved for blow-ups of type (1) by Bayraktar and Cantat under the additional hypothesis that \(\rho(X) = 1\)~\cite{bayraktarcantat}. Here we present a different proof which does not require this hypothesis.  Although the proof in case (1) is independent of our main results,  the handling of case (2) relies on the results of the preceding sections. 
Observe that a blow-up at a center of Picard rank $1$ automatically satisfies (2).
The fact that number $N$ in Theorem \ref{generalblowup} does not depend on $\phi$ is useful in applications, particularly Corollary \ref{boundedautos}.

Observe that part (2) of Theorem \ref{generalblowup} implies that blowups of fourfolds along points and curves cannot obtain new infinite order automorphisms:

\begin{restatable}{corollary}{corblowupauto}
\label{bcbound}
Suppose that \(X\) is a smooth projective variety of dimension \(4\), and that \(Y\) is a variety constructed by a sequence of blow-ups of points and smooth curves in \(X\).  Then there exists an integer $N \geq 1$ such that for any automorphism $\phi$ of \(Y\), $\phi^N$ descends to an automorphism of $X$.
\end{restatable}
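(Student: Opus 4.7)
The plan is to deduce the corollary as a direct application of Theorem~\ref{generalblowup}; no new dynamical input is required, and the only task is to verify the numerical/geometric hypotheses at every stage of the blow-up sequence.

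Since the blow-ups do not change dimension, each intermediate variety $X_i$ is a smooth projective $4$-fold. The center of the $i$-th blow-up is either a point, in which case $r = \dim \pi_i(E) = 0$, or a smooth curve, in which case $r = 1$. With $n = 4$ one then checks:

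\begin{itemize}
\item If $r = 0$, condition (1) of Theorem~\ref{generalblowup} holds trivially, since $2r+3 = 3 \leq 4 = n$.
\item If $r = 1$, condition (1) fails ($2r + 3 = 5 > 4$), so one must verify condition (2). The inequality $r + 3 = 4 \leq 4 = n$ is satisfied, and it remains to confirm that $\on{Nef}(E)$ is polyhedral. Here $E = \PP(N_{C/X_i})$ is a $\PP^2$-bundle over the smooth curve $C$, so $\on{Pic}(E)$ has rank $2$ (generated by a fiber and the relative hyperplane class); since every convex cone in a two-dimensional real vector space is polyhedral, $\on{Nef}(E)$ is polyhedral. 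Equivalently, as the author remarks after Theorem~\ref{generalblowup}, any blow-up along a center of Picard rank $1$ automatically satisfies (2), and a smooth projective curve has Picard rank $1$.
\end{itemize}

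Having checked the hypotheses stage by stage, Theorem~\ref{generalblowup} then produces an integer $N \geq 1$ such that $\phi^N$ descends through the entire tower of blow-ups to an automorphism of $X$, as desired.

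There is essentially no obstacle: the only subtlety worth noting is that the blow-up centers need to be smooth inside the intermediate $X_i$ (not only inside $X$), which is built into the hypothesis ``sequence of blow-ups of points and smooth curves'' together with the fact that dimensions are preserved. The uniformity in $\phi$ is inherited directly from the corresponding uniformity in Theorem~\ref{generalblowup}.
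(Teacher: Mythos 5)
Your proposal is correct and follows essentially the same route as the paper: both reduce to Theorem~\ref{generalblowup} by noting that with $n=4$ and $r\leq 1$ the numerical hypotheses hold, and that $N^1(E)$ has rank at most $2$ (rank $1$ for a point blow-up, rank $2$ for $\PP(N_{C/X_i})$ over a curve), so $\Nef(E)$ is automatically polyhedral. The only cosmetic difference is that you invoke condition (1) for point centers while the paper applies condition (2) uniformly; both are valid.
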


The conclusion of Theorem \ref{generalblowup} is false when \(r = n-2\) for every value of \(n\) (see Example~\ref{ellipticfibr} below), but the situation for \(\lfloor \frac{n-1}{2} \rfloor \leq r \leq n-3\) remains unclear.  It seems natural to ask:

\begin{question}
Does there exist a projective variety \(X\) with \(\Aut(X)\) a finite group, and a sequence of smooth blow-ups \(\pi : Y \to X\) along smooth centers of codimension at least \(3\) such that \(\Aut(Y)\) is infinite?
\end{question}

Corollary~\ref{boundedautos} below shows that the answer is negative for \(\dim(X) \leq 4\).  

\begin{restatable}{corollary}{corboundedautos}
\label{boundedautos}
Let $X$ and $Y$ be as in Theorem \ref{generalblowup}.  Then if $\on{Aut}(X)$
has finite component group, the same is true for $\on{Aut}(Y)$.
\end{restatable}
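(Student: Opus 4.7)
By Theorem~\ref{generalblowup}, fix $N$ so that $\phi^N$ descends to an element of $\Aut(X)$ for every $\phi \in \Aut(Y)$.  Let $H \subseteq \Aut(Y)$ be the closed subgroup of automorphisms that descend to $\Aut(X)$.  Since each blow-up in the sequence $\pi : Y \to X$ has smooth center of codimension at least $2$ and $Y$ is normal, $\pi$ is an isomorphism on the complement of the exceptional locus; any $\phi$ descending to $\on{id}_X$ must therefore restrict to the identity on a dense open of $Y$, and so the descent map $\rho : H \to \Aut(X)$ is injective on $R$-points for every $k$-algebra $R$.  An injective morphism of group schemes locally of finite type over a field of characteristic zero is a closed immersion, so $H$ sits as a closed subgroup of the finite-type group scheme $\Aut(X)$ and is itself of finite type.

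I next claim $\Aut^0(Y) \subseteq H$.  The $N$-th power morphism $\mu_N : \Aut(Y) \to \Aut(Y)$ lands in $H$ by Theorem~\ref{generalblowup}.  Its differential at the identity is multiplication by $N$ on $\on{Lie}(\Aut(Y))$, hence an isomorphism in characteristic zero, so $\mu_N$ is dominant on $\Aut^0(Y)$.  The image $\mu_N(\Aut^0(Y))$ sits inside the closed subscheme $H \cap \Aut^0(Y) \subseteq \Aut^0(Y)$ and is dense there, forcing $\Aut^0(Y) \subseteq H$.  Consequently $\pi_0(H) = H/\Aut^0(Y)$ is a finite subgroup of $\pi_0(\Aut(Y))$.

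The final step passes to the action on the N\'eron--Severi group.  Let $\on{NS}(Y)$ denote that group, $\on{Aut}^*(Y) \subseteq \GL(\on{NS}(Y))$ the image of $\Aut(Y)$, and $H^* \subseteq \on{Aut}^*(Y)$ the image of $H$.  Since $\Aut^0(Y)$ acts trivially on $\on{NS}(Y)$, $H^*$ is a quotient of the finite group $\pi_0(H)$, so $H^*$ is finite of some order $M$.  For every $\phi \in \Aut(Y)$ one has $\phi^N \in H$, whence $\phi_*^{NM} = \on{id}$ in $\on{Aut}^*(Y)$.  Thus $\on{Aut}^*(Y) \subseteq \GL(\on{NS}(Y)) \cong \GL_n(\ZZ)$ is a subgroup of bounded exponent; by Minkowski's theorem (reduction modulo an odd prime is injective on torsion elements of $\GL_n(\ZZ)$) such a subgroup must be finite.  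Combined with the Lieberman--Fujiki theorem---that the kernel of the action of $\Aut(Y)$ on $\on{NS}(Y)$ contains $\Aut^0(Y)$ with finite index, which applies since $Y$ is smooth projective---this gives $\pi_0(\Aut(Y))$ finite.

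The main obstacle is the containment $\Aut^0(Y) \subseteq H$: without it, finiteness of $H^*$ cannot be deduced from $H$ being of finite type, and the bounded-exponent argument collapses.  The use of characteristic zero (via the dominance of the $N$-th power map on connected algebraic groups) is essential here.
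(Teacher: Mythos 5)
Your proof is correct and follows essentially the same strategy as the paper: identify the subgroup of automorphisms descending to $X$ with a finite-type group scheme via $\Aut(X)$, deduce that the component group of $\Aut(Y)$ has bounded exponent, and conclude by combining a Burnside/Minkowski-type finiteness for the linear action on cohomology with Lieberman--Fujiki. The only differences are cosmetic: the paper works with the stabilizer $G \subset \underline{\on{Aut}}(X)$ of the blow-up center and the bound $M = \#|G/G^0|$ together with $G^0 \subseteq \underline{\on{Aut}}^0(Y)$ (so it never needs your step $\on{Aut}^0(Y) \subseteq H$ via the $N$-th power map), and it uses $H^*(Y,\C)$ with its Lemma \ref{burnside} in place of $\on{NS}(Y)$ and Minkowski.
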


Truong has also obtained a variety of results in this direction, under some additional hypotheses on the nef cone of \(Y\) (e.g.~\cite[Lemma 6]{truongblowup}).     
Arguments of a similar flavor appeared in the first author's paper~\cite{jdlthreeautos}, where the three-dimensional case of Theorem~\ref{separationthm} was proved by different methods.
The results on automorphisms in~\cite{jdlthreeautos} are specific to positive entropy automorphisms in dimension \(3\); much stronger conclusions can be obtained in that setting.
\section{Some examples}
\label{examplesection}

Before giving the proof of Theorem~\ref{nonreducedmordelllangpds}, we give an indication of its geometric content in the case that \(Y\) is a non-reduced subscheme.  For simplicity, suppose that \(X\) is a smooth algebraic surface, and that \(Y\) and \(Z\) are two smooth curves on \(X\) which intersect at \(V\), a fixed point of an automorphism \(\phi : X \to X\).  

Let \(Y^{(1)} \) be the first-order germ of \(Y\) at \(V\), a closed subscheme given by an embedding \(\Spec \C[\epsilon]/(\epsilon^2) \to X\).  We have \(\phi^n(Y^{(1)}) \subseteq Z\) exactly when \(\phi^n(Y)\) is tangent to \(Z\) at the fixed point \(V\). 

Theorem~\ref{nonreducedmordelllang} applied to the scheme \(Y^{(1)}\) then asserts that 
\begin{align*}
A_\phi(Y^{(1)},Z) &= \set{n : \phi^n(Y^{(1)}) \subseteq Z} \\
 &= \set{n : \text{$\phi^n(Y)$ is tangent to $Z$ at $V$}} 
\end{align*}
is a union of a finite set and finitely many arithmetic progressions.  

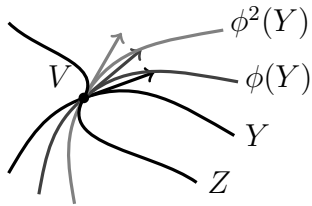
\begin{figure}[htb]
  \centering
\begin{tikzpicture}[scale=1.0]

\draw [very thick] (-1,-1) to[out=60,in=20-180] (0,0) to[out=20,in=180-30] (2,-1/2);
\node [right] at (2,-1/2) {$Y$};
\draw [->,very thick] (0,0) to ({cos(20)},{sin(20)});

\draw [rotate around={20:(0,0)}, color=black!75,very thick] (-1,-1) to[out=60,in=20-180] (0,0) to[out=20,in=180-30] (2,-1/2);
\node [right] at (2,1/4) {$\phi(Y)$};
\draw [->,color=black!75,very thick,rotate around={20:(0,0)}] (0,0) to ({cos(20)},{sin(20)});

\draw [rotate around={40:(0,0)}, color=gray,very thick] (-1,-1) to[out=60,in=20-180] (0,0) to[out=20,in=180-30] (2,-1/2);
\node [right] at (1.8,1) {$\phi^2(Y)$};
\draw [->,color=gray,very thick,rotate around={40:(0,0)}] (0,0) to ({cos(20)},{sin(20)});

\draw[color=black,very thick] (1.5,-9/8) to[out=145,in=180+60] (0,0) to[out=60,in=180-225] (-1,1);
\node [right] at (1.5,-9/8) {$Z$};

\fill (0,0) circle[radius=2pt];
\node [above left] at (0,0) {$V$};
\end{tikzpicture}
\caption{\(\phi^n(Y)\) is tangent to \(Z\) at \(V\) when \(\phi^n(Y^{(1)}) \subset Z\).}
\end{figure}

More generally, take \(Y_1^\circ = C\) and \(Y_2^\circ = V\), and consider the collection of schemes \(Y^{(j)}\) defined by the ideal sheaves
\[
\mathscr I_{Y_1^\circ} + \mathscr I_{Y_2^\circ}^{j+1},
\]

Then scheme \(Y^{(1)}\) is precisely the one considered above, while \(Y^{(j)}\) corresponds to a \(j\)\textsuperscript{th}-order germ of the curve \(C\) at \(V\).  The collection of closed subschemes \(Y^{(j)}\) is infinite, but it satisfies the hypotheses of Theorem~\ref{nonreducedmordelllangpds}(2).

Applying the theorem to the higher-order germs \(Y^{(j)}\) of \(Y\) at \(V\), 
we find that
\begin{align*}
A_\phi(Y^{(j)},Z) &= \set{n : \phi^n(Y^{(j)}) \subseteq Z} \\
 &= \set{n : \text{$\phi^n(Y)$ is tangent to $Z$ at $V$ to order $\geq j$}} 
\end{align*}
is the union of a finite set and finitely many arithmetic progressions, and there exists a bound \(N\) on the periods of the arithmetic progressions occurring in these sets which is independent of \(j\).  This implies in particular that there is a uniform bound on the order of tangency between \(C\) and \(\phi^n(C)\) for nonzero \(n\): if \(C\) is ever tangent to some \(\phi^n(C)\) to order at least some \(j\), then such a tangency must occur within the first \(N\) iterates or within the finite set.

We now turn our attention to Theorem~\ref{separationthm} in this setting: the  theorem asserts that it that it is possible to blow up above the point \(V\) a finite number of times and reach a model \(X^\prime\) such that \(\phi\) lifts to an automorphism of \(X^\prime\) and the curves \(\phi^n(Y)\) simultaneously become disjoint above \(V\). These two objectives are in tension: if we do not blow up enough, the curves \(\phi^n(C)\) will not be disjoint; on the other hand, if we blow up too aggressively, the automorphism \(\phi\) will not lift. There are essentially two obstacles to constructing the requisite blow-ups.

\medskip

\emph{First issue.} Suppose, for example, that \(C\) in tangent to \(\phi^n(C)\) at \(V\) precisely when \(n\) is a perfect square.  After blowing up the point \(V\), the curves \(C, \phi(C), \phi^4(C),\ldots\) all meet at a single point \(V^\prime\) on the exceptional divisor, while the other \(\phi^n(C)\) do not.  A further blow-up at \(V^\prime\) is necessary in order to make the curves \(\phi^n(C)\) disjoint.  However, the point \(V^\prime\) is not fixed by \(\psi : \Bl_V X \to \Bl_V X\), since the strict transforms of some of the \(\phi^n(C)\) do not pass through it.  As a result, it is impossible to blow up \(V^\prime\) and obtain a model to which $\psi$ lifts.

This issue can be avoided only if \(C\) is tangent to \(\phi^n(C)\) for either all \(n\) or for no \(n\).  More generally, since the statement of the theorem allows us to replace \(\phi\) by a positive iterate, it is necessary that the set of \(n\) for which \(C\) is tangent to \(\phi^n(C)\) is an arithmetic progression. Similar considerations on higher blow-ups suggest that it is also necessary that the set of \(n\) for which \(\phi^n(C)\) is tangent to \(C\) to order \(\geq k\) is an arithmetic progression, for any value of \(k\).

\medskip

\emph{Second issue.} Suppose that \(C\) is tangent to \(\phi^{2^k n}(C)\) to order \(k\) when \(n\) is odd.  This is consistent with the requirement of (1) that the set of iterates tangent to a given order is an arithmetic progression.  However, no finite sequence of blow-ups can separate all the curves, because the order of tangency between \(C\) and \(\phi^{2^k}(C)\) grows without bound.

\medskip

To overcome these two obstacles, we must show that: (1) \(C\) is tangent to \(\phi^n(C)\) to order \(k\) for all \(n\) in some arithmetic progression; (2) the bound on the length of the progression is independent of \(k\).  This is precisely the content of Theorem~\ref{nonreducedmordelllangpds} where applied to the schemes \(Y^{(j)}\) above.

The next two examples give automorphisms to which Theorem~\ref{mainblowup} is applicable, and in which conclusions of the theorem can easily be seen.

\begin{example}
Let \(X = \P^2\), and let \(\phi : \P^2 \to \P^2\) be an automorphism of infinite order.  Take \(D\) be a general line through a fixed point \(x\).  Then \(D\) has infinite order under \(\phi\), but \(V_D = \bigcup_{n \neq 0} D \cap \phi^n(D) = \set{x}\) is Zariski closed.

This is consistent with the last part of Theorem~\ref{mainblowup}: take \(\pi : Y \to X\) to be the blow-up at \(x\).  Then \(\phi\) lifts to an automorphism \(\psi\) of \(Y\), and the divisors \(\psi^n(\tilde{D})\) are pairwise disjoint.  The pencil of lines through \(x\) induces a map \(f : Y \to \P^1\), and there is an automorphism of \(\tau : \P^1 \to \P^1\) with \(f \circ \psi = \tau \circ f\).  Note that \(H^0(D,N_{D/X}) = H^0(D,\cO_D(1)) > 0\) as required.
\end{example}

The next example shows that although \(N_{D/X}\) must be effective, in general it need not have any stronger sort of positivity (e.g.\ nefness or movability).

\begin{example}
Let \(\psi : \P^3 \to \P^3\) be an infinite order automorphism, with fixed points \(p_i\).  Let \(\pi : X \to \P^3\) be the blow-up of \(\P^3\) at the points \(p_1\) and \(p_2\), with exceptional divisors \(E_1\) and \(E_2\). Then \(\psi\) induces an automorphism \(\phi : X \to X\).  Let \(D\) be the strict transform on \(X\) of a general plane in \(\P^3\) passing through the points \(p_1\) and \(p_2\), and let \(L\) be the strict transform of the line between these two points.  Then \(V_D = L\) is not Zariski dense.  

In this case, \(N_{D/X} \cong \left( \pi^\ast \mathscr O_{\P^3}(1) \otimes \cO_X(-E_1-E_2) \right)\vert_D\) is not nef, since it has negative intersection with \(L\).  This shows that the conclusion of  Theorem~\ref{mainblowup} that \(N_{D/X}\) is effective can not be strengthened to include the conclusion that it is nef or even movable.
\end{example}

It follows from Theorem~\ref{mainblowup} that if \(\phi : X \to X\) is an automorphism of a projective variety and \(D \subset X\) is any irreducible divisor which is not the fiber of any \(\phi^k\)-equivariant fibration, then \(\bigcup_{n \neq 0} D \cap \phi^n(D)\) is a Zariski dense subset of \(D\).  However, this set is typically not analytically dense.

\begin{example}
Suppose that \(\phi : X \to X\) is an automorphism of a surface.  Let \(x \in X\) be a attracting fixed point of \(\phi\) with basin of attraction the (analytically) open set \(W^s(x) \subset X\), and let \(y \in X\) be an attracting fixed point of \(\phi^{-1}\) with of attraction \(W^u(y) \subset X\).  Suppose too that \(W^s(x) \cap W^u(y)\) is nonempty. 

Let \(C \subset X\) be a curve meeting \(W^s(x) \cap W^u(y)\) but not passing through the points \(x\) or \(y\).  Choose a point \(z \in C \cap W^s(x) \cap W^u(y)\), and let \(B \subset X\) be a closed ball around \(z\) which is contained in \(W^s(x) \cap W^u(y)\).  
Fix a number \(N\) and compact sets \(K_x\) containing \(x\) and \(K_y\) containing \(y\) with \(K_x \cap C\) and \(K_y \cap C\) empty and such that if \(n > N\) we have \(\phi^n(B) \subset K_x\) and \(\phi^{-n}(B) \subset K_y\).

Let \(B^0\) be the interior of \(B\), and consider the open subset \(A = B^0 \cap C \subset C\).  If \(\abs{n} > N\), and \(a\) is any point of \(A\),  we have either \(\phi^n(a) \in K_x\) or \(\phi^n(a) \in K_y\), so that in particular \(\phi^n(a)\) does not lie on \(C\).  Then \(A \setminus \left( \bigcup_{n=-N}^N \phi^n(C) \cap C\right)\) is an (analytically) open subset of \(C\) which is disjoint from \(\bigcup_{n \neq 0} C \cap \phi^n(C)\), but in general \(C\) will not be a fiber of an invariant rational fibration.
\end{example}

It bears noting that Theorem~\ref{nonreducedmordelllang} contains the classical Skolem--Mahler--Lech theorem on arithmetic progressions as a special case.  As a result, it comes as no surprise that the proof relies on methods of \(p\)-adic analysis.
\begin{example}[\cite{bellghiocatucker}]
Let \(X = \P^2\) with coordinates \([W_0,W_1,W_2]\), and let \(M : \P^2 \to \P^2\) be the linear automorphism given by the matrix
\[
M = \begin{bmatrix}
0 & 1 & 0 \\
0 & 0 & 1 \\
c_3 & c_2 & c_1
\end{bmatrix}
\]
Let \(Y\) be the point \([x_0,x_1,x_2]\), and let \(Z\) be the hyperplane \(W_0 = 0\).  Then \(\phi^n(Y)\) is the point \([x_n,x_{n+1},x_{n+2}]\), where \(x_n\) is defined by the linear recurrence sequence \(x_n = c_1 x_{n-1} + c_2 x_{n-2} + c_3 x_{n-3}\).  Then \(\phi^n(Y) \subseteq Z\) exactly when \(x_n = 0\), and the theorem asserts that the set of such \(n\) is the union of a finite set and an arithmetic progression.  This is the classical Skolem--Mahler--Lech theorem.

Recall that the Skolem--Mahler--Lech theorem is false over a field \(k\) of positive characteristic, and so our main results cannot be extended to that setting.
\end{example}

Automorphisms of blow-ups of \(\P^2\) at configurations of \(9\) or more points are an important source of examples.  However, analogous examples in higher-dimensional settings are little understood.  The result of Theorem~\ref{bcbound} strengthens dimensional constraints on such examples due to Bayraktar and Cantat.

\begin{example}
\label{ellipticfibr}
Let \(C_1\) and \(C_2\) be two general cubics in \(\P^2\), and let \(X\) be the blow-up of \(\P^2\) at the \(9\) points of \(C_1 \cap C_2\), with exceptional divisors \(E_0,\ldots,E_8\).  The pencil of cubics induces an elliptic fibration \(\pi : X \to \P^1\), and the \(E_i\) are sections of the pencil.
The divisor \(E_0\) determines a section of \(\pi\), which we take to be the base point on each fiber.  Fiberwise addition using the group law of the fibers the gives infinite order automorphisms \(\tau_i : X \to X\) (\(1 \leq i \leq 8)\)) induced by addition of \(E_i\).

Taking products \(X = \P^2 \times \P^{n-2}\), we obtain examples of automorphisms of blow-ups of \(X\) along codimension \(2\) subsets which admit automorphisms of infinite order.
\end{example}

The next two examples illustrate some of the subtleties of dynamical Mordell--Lang statements in the setting of non-reduced schemes; the existence of embedded points creates new complications.  The first example gives an automorphism \(\phi : X \to X\) and an infinite collection of closed subschemes \(Y_j \subset X\) such that the periods of the arithmetic progressions in the sets \(A_\phi(Y_j,Z)\) are not uniformly bounded.  This necessitates that we consider only subschemes \(Y_j\) of a restricted form, as in Theorem~\ref{nonreducedmordelllangpds}.

\begin{example}
\label{supportnotenough}
Let \(g : S \to S\) be an automorphism of a projective variety for which there exist periodic points of all periods \(k \geq 0\). For example, we may take \(E\) to be an elliptic curve and let \(S = E \times E\).  Then the map \(g : S \to S\) given by the matrix \(\left(\begin{smallmatrix} 1 & 1 \\ 0 & 1 \end{smallmatrix} \right)\) has periodic points of all possible periods: if \(x_k\) is an \(k\)-torsion point on \(E\), then \(\phi^k(0,x_k) = (kx_k,x_k) = (0,x_k)\), and so this point has period \(k\).

Now, let \(X = S \times S \times \P^1\) and consider the automorphism \(\phi = \id_S \times \phi \times \id_{\P^1} : X \to X\). Let \(Z\) be the subscheme defined by \(\mathscr I_{S \times S \times 0} \cdot \mathscr I_{\Delta \times 0}^2\), giving \(S \times S \times 0\) with nilpotents in the structure sheaf along the diagonal \(\Delta \subset S \times S\).  Then consider the subschemes \(Y_k\) defined by \(\mathscr I_{S \times S \times 0} \cdot \mathscr I_{(y_k,y_k,0)}^2\), where \(y_k\) is a \(g\)-periodic point of period \(k\).  The associated points of \(Y_k\) are the generic point of \(S \times S \times 0\) and  embedded point \((y_k,y_k,0)\).

Observe that \(\mathscr I_{\phi^n(Y_k)} = 
\mathscr I_{S \times S \times 0} \cdot \mathscr I_{\phi(y_k,y_k,0)}^2 = \mathscr I_{S \times S \times 0} \cdot \mathscr I_{(x_k,\phi^n(x_k),0)}^2\), since \(\phi^n(y_k,y_k,0) = (y_k,\phi^n(y_k),0)\).  Then \(\phi^n(Y_k)\) is contained in \(Z\) if and only if the embedded point \((y_k,\phi^n(y_k),0)\) is contained in \(Z\), which is the case exactly when \((y_k,\phi^n(y_k),0)\) lies on the diagonal \(\Delta \times 0 \subset X\), so that \(\phi^n(y_k) = y_k\).  In particular, $\phi^n(Y_k)\subset Z$ only $n$ is a multiple of $k$.
\end{example}

\begin{example}
\label{manyassociatedpoints}
The hypothesis that the set \(\Xi\) is finite in Theorem \ref{nonreducedmordelllangpds} need not hold in general for subschemes defined by ideals of the form \(\sum_{i=1}^r \mathscr I_{Y_i^\circ}^{n_i}\).  This is illustrated by the following example of Singh and Swanson~\cite[Theorem 4.6]{singhswanson}.

Let \(k\) be an algebraically closed field and let \(X\) be the hypersurface in \(\mathbb A^4 = \Spec k[w,x,y,z]\) defined by \(wy^2 + xyz + wz^2 = 0\).  Consider the subschemes defined by the ideal sheaves \(\mathscr I_{Y_1^\circ} = (w)\) and \(\mathscr I_{Y_2^\circ} = (x)\)  It is shown in \cite{singhswanson} that for any \(n\), the associated points of the closed subscheme defined by \(\mathscr I_{Y_1^\circ}^n + \mathscr I_{Y_2^\circ}^n\) 
correspond to the primes
\[
(y,z), (x,y,z), (w,x,y,z), (y,z,x-w \xi - w \xi^{-1}),
\]
where \(\xi\) ranges among the \(n\)\textsuperscript{th} root of unity.  Here we have two surfaces contained in a threefold, intersecting along a curve.  The thickenings of the surfaces defined by powers of the ideals have intersections with various embedded points along the curve.
\end{example}

\section{Dynamical Mordell--Lang in the non-reduced case}
\label{mordelllang}

\begin{definition}
A subset \(A \subset \Z\) is said to be a a one-sided semilinear set if there is a decomposition $$A = F \cup\left(\bigcup_{i=1}^m P_i\right),$$ where \(F\) is a finite set and each \(P_i = a_i + b_i \Z_{\geq 0}\) is a one-sided arithmetic progression.
Similarly, \(A \subset \Z\) is called a (two-sided) semilinear set if $$A =  F \cup\left(\bigcup_i P_i\right),$$ where \(F\) is a finite set and each \(P_i = a_i + b_i \Z\) is a two-sided arithmetic progression.

We say that a one-sided or two-sided semilinear set \(A\) is \(N\)-periodic if there exists a decomposition for which the period \(b_i\) of each \(P_i\) divides \(N\).  \end{definition}
\begin{lemma}
\label{semilinearintersect}
Suppose that \(\set{A_i}\) is a (possibly infinite) collection of \(N\)-periodic semilinear sets.  Then \(\bigcap_i A_i\) is an \(N\)-periodic semilinear set.
\end{lemma}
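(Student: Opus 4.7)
The plan is to reduce the lemma to elementary bookkeeping about residues modulo $N$, exploiting the finiteness of $\Z/N\Z$ to control the infinite intersection. The first step is to put each $A_i$ into a canonical normal form. Observing that any two-sided arithmetic progression $a + b\Z$ with $b \mid N$ decomposes as the disjoint union of its $N/b$ residue classes modulo $N$, every $N$-periodic semilinear set $A$ can be rewritten as
\[
A = F_A \cup T_A, \qquad T_A = \set{n \in \Z : n \bmod N \in S_A},
\]
for some subset $S_A \subseteq \Z/N\Z$ and some finite set $F_A \subseteq \Z$ (we may even arrange $F_A \cap T_A = \emptyset$, although this is unnecessary).

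Given such decompositions $A_i = F_i \cup T_i$ with $T_i = \set{n : n \bmod N \in S_i}$, I would set $S := \bigcap_i S_i \subseteq \Z/N\Z$ and $T := \set{n \in \Z : n \bmod N \in S}$. The inclusion $T \subseteq \bigcap_i A_i$ is automatic, since $T \subseteq T_i \subseteq A_i$ for every $i$. The main step is then to show that $(\bigcap_i A_i) \setminus T$ is finite. If $n$ lies in this set, then its residue $r = n \bmod N$ lies outside $S = \bigcap_i S_i$, so some index $i_r$ satisfies $r \notin S_{i_r}$; since $n \in A_{i_r} = F_{i_r} \cup T_{i_r}$ but $n \notin T_{i_r}$, we must have $n \in F_{i_r}$. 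Fixing one such $i_r$ once and for all for each of the finitely many residues $r \in \Z/N\Z \setminus S$ gives
\[
\Bigl(\bigcap_i A_i\Bigr) \setminus T \;\subseteq\; \bigcup_{r \notin S} F_{i_r},
\]
a finite union of finite sets, hence finite. Setting $F$ equal to this overflow yields $\bigcap_i A_i = F \cup T$, which is manifestly an $N$-periodic semilinear set.

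I do not anticipate any genuine obstacle: the crucial reason the lemma works for arbitrary infinite families $\set{A_i}$ is that the choice of witnesses $r \mapsto i_r$ involves only finitely many selections, one per residue class in $\Z/N\Z \setminus S$. Once the residue-class normal form is in hand, everything else is a one-line bookkeeping argument, and the potentially wild ``correction'' sets $F_i$ never accumulate beyond a single representative per bad residue class.
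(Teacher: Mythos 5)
Your proof is correct and follows essentially the same route as the paper's: both normalize each $A_i$ into a finite set plus a union of full residue classes modulo $N$, then argue class by class that the intersection either contains the whole class or meets it in a finite set (your explicit choice of a witness $i_r$ per bad residue is just a slightly more careful phrasing of the paper's observation that $(m+N\Z)\cap A_i$ is either $m+N\Z$ or finite). No further comment is needed.
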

\begin{proof}
Observe that if \(A\) is an \(N\)-periodic semilinear set, then it can be written as the union of a finite set and a finite set of arithmetic progressions of length exactly \(N\): indeed, if \(k\) divides \(N\), an arithmetic progression of length \(k\) can be subdivided into \(N/k\) progressions of length \(N\).
For each \(A_i\), write
\[
A_i = F_i \cup \left( \bigcup_j P_{i,j} \right)
\]
where each \(P_{i,j}\) is an arithmetic progression with period \(N\).  Then
\[
\bigcap_i A_i = \left( \bigcup_{0 \leq m \leq N} (m+N\Z) \right) \cap \bigcap_i A_i = \bigcup_{0 \leq m \leq N} \left( (m+N\Z) \cap \bigcap_i A_i \right).
\]
Note that \((m+N\Z) \cap A_i = m+N\Z\) if \(m+N\Z\) is among the arithmetic progressions \(P_{i,n}\), while \((m+N\Z) \cap A_i = (m+N\Z) \cap F_i\) is a finite set otherwise.
It follows that \((m+N\Z) \cap A_i = m+N\Z\) is either \(m+N\Z\) or finite, and so
\[
\bigcup_{0 \leq m \leq N} \left( (m+N\Z) \cap \bigcap_i A_i \right)
\]
is a union of finite sets and arithmetic progressions of length \(N\).
\end{proof}

We now prove a local version of our Dynamical Mordell-Lang theorem.

\begin{lemma}[Local Dynamical Mordell--Lang]\label{localmordell-lang}
Let $K$ be a finite extension of $\mathbb{Q}_p$ with valuation ring $R$, uniformizer $\pi$, and residue field $\kappa$, and let $f\in R\langle x_1, \cdots, x_n\rangle^n$ be an $n$-tuple of convergent power series (i.e.\ the $p$-adic absolute values of the coefficients tend to zero) inducing a topological isomorphism $$R[[x_1, \cdots, x_n]]\to R[[x_1, \cdots, x_n]].$$  Suppose that $f$ is affine-linear mod $\pi$.

Then for any two closed formal subschemes $Y, Z$ of $\on{Spf}(R[[x_1, \cdots, x_n]])$, the set of $m\in \mathbb{Z}$ such that $f^m(Y)\subseteq Z$ is a two-sided semi-linear set.  Furthermore, the semilinear set is \(N\)-periodic for some \(N\) depending only on $\#|k|$ and $|\pi|_p$ (and not on $Y$, $Z$, or $f$). 
\end{lemma}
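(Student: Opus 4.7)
The plan follows the $p$-adic interpolation strategy of Bell--Ghioca--Tucker, adapted to the non-reduced setting (as in Bell--Lagarias for the affine case). The overall idea: find $N$, depending only on $\#|\kappa|$, $|\pi|_p$, and $n$, so that $f^N$ is close enough to the identity that $k \mapsto f^{Nk}$ extends to a $p$-adic analytic map $\Phi : \mathbb{Z}_p \to R\langle x_1, \ldots, x_n\rangle^n$ with $\Phi(k) = f^{Nk}$ for $k \in \mathbb{Z}$; then, in each residue class mod $N$, convert the containment $f^{Nk+r}(Y) \subseteq Z$ into the vanishing of a $p$-adic analytic family with values in $R\langle x\rangle / I_Y$ and invoke Strassmann's theorem.

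For the choice of $N$: as $f$ is a continuous $R$-algebra automorphism, it preserves the maximal ideal $(\pi, x_1, \ldots, x_n)$, and its reduction mod $\pi$ is a local $\kappa$-algebra automorphism of $\kappa[[x]]$. The affine-linearity hypothesis forces this reduction to have no constant term and to act linearly, with matrix in $GL_n(\kappa)$. Taking $N_0$ to be a multiple of $|GL_n(\kappa)|$ gives $f^{N_0} \equiv \mathrm{id} \pmod{\pi}$. A further factor of a power of $p$ dictated by $|\pi|_p$ yields $N$ with $f^N - \mathrm{id} \in \pi^c R\langle x\rangle^n$ for $c$ large enough to lie in the convergence radius of the formal-diffeomorphism logarithm; standard formal-group / Campbell--Hausdorff / Mahler-expansion constructions then produce the analytic interpolation $\Phi$.

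For the vanishing analysis, fix finitely many generators $g_1, \ldots, g_s$ of the closed ideal $I_Z$. The containment $f^{Nk}(Y) \subseteq Z$ is equivalent to $g_j(\Phi(k,x)) \in I_Y$ for each $j$. Expanding in the Mahler basis gives $g_j(\Phi(k,x)) = \sum_{i \geq 0} F_{i,j}(x) \binom{k}{i}$ with $F_{i,j} \in R\langle x\rangle$ and $\|F_{i,j}\| \to 0$. Passing to the quotient affinoid algebra $A = R\langle x\rangle / I_Y$, we obtain an $A$-valued analytic function of $k$. A vector-valued Strassmann argument---separate a nonzero Mahler coefficient $\bar F_{i_0, j} \in A$ from zero by a continuous $R$-linear functional $\lambda$ (into $K$ via $p$-adic Hahn--Banach in the torsion-free case, or into $R/\pi^L$ for suitable $L$ via a finite-length $\mathfrak m_A$-adic quotient in the $\pi$-torsion case) and reduce to classical Strassmann on $\lambda \circ g_j(\Phi(\cdot, x))$---shows that the zero set of $k \mapsto g_j(\Phi(k,x)) \bmod I_Y$ is either all of $\mathbb{Z}_p$ or finite. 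This dichotomy is preserved under the finite intersection over $j$. Running the argument for each of the $N$ residue classes (by replacing $Y$ with $f^r(Y)$, $r = 0, \ldots, N-1$) then gives the claimed $N$-periodic two-sided semilinear structure.

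The main technical obstacle is the vector-valued Strassmann step in the $\pi$-torsion case (e.g.\ when $\pi \in \sqrt{I_Y}$): there $A \otimes_R K = 0$, so classical $p$-adic Hahn--Banach on a Banach $K$-vector space is unavailable, and one must separate nonzero elements of $A$ by $R$-linear functionals into finite $R/\pi^L$. Each resulting scalar function on $\mathbb{Z}_p$ is then continuous with values in a finite ring, hence locally constant, and one must use the closeness of $f^N$ to the identity together with the completeness of $A$ for the $\mathfrak m_A$-adic topology to combine these locally-constant constraints into a finite-or-full zero-set dichotomy with uniformly bounded period. Handling this torsion case carefully is where the quantitative dependence of $N$ on $|\pi|_p$ really earns its keep.
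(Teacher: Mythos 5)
Your proposal is correct and follows essentially the same route as the paper: replace $f$ by an iterate $f^N$ congruent to the identity modulo a high power of $\pi$ (using affine-linearity mod $\pi$ and the finiteness of the affine group over $\kappa$), $p$-adically interpolate $k\mapsto f^{Nk}$ (the paper cites Poonen's result for this), and then apply a Strassmann-type finite-or-identically-zero dichotomy to the pullbacks of elements of $\mathscr I_Z$ viewed as analytic functions valued in $\mathscr O_Y$, intersecting over generators. The vector-valued Strassmann step that you elaborate (separating by continuous $R$-linear functionals, including in the $\pi$-torsion case) is exactly the point the paper handles via its definition of analyticity for maps into infinite-rank $R$-modules, so your extra care there is a welcome expansion rather than a divergence.
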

\begin{proof}
By the affine-linearity assumption, $$f(\mathbf{x})=A\mathbf{x}+\mathbf{b}\bmod \pi$$ for some $A\in \GL_n(\kappa)$ and $\mathbf{b}\in \kappa^n.$   As the group of affine-linear transformations of $\kappa^n$ is finite, there exists some $N$ such that $$f^N=\mathbf{x}\bmod \pi.$$ (Observe that this $N$ depends only on $\#|\kappa|$.)  After possibly increasing $N$ (say, replacing it with $p^MN$ for some $M\gg 0$ depending only on $|\pi|_p$), we may assume that $$f^N=\mathbf{x}\bmod \pi^a$$ for any fixed $a$.  See for example Remark 4 of \cite{poonen}.

We may now apply the main result of \cite{poonen} to obtain an element $$g\in R\langle x_1, \cdots, x_n, m\rangle$$ such that $$g(\mathbf{x}, r)=f^{Nr}(\mathbf{x})$$ for any $r\in \mathbb{Z}$.
Given any $\gamma\in R$, $g$ induces a map $$g(-, \gamma): R[[x_1, \cdots, x_n]]\overset{g(\mathbf{x}, \gamma)}{\longrightarrow} R[[x_1, \cdots, x_n]].$$  Fixing any element $h\in \mathscr{I}_Z$, the induced function $$g(h, -): \mathbb{Z}_p\to R[[x_1, \cdots, x_n]]$$ $$g(h, -): \gamma\mapsto g(h, \gamma)$$ is $p$-adic analytic.  Let $q: R[[x_1, \cdots, x_n]]\to \mathscr{O}_Y$ be the natural quotient map. 

Let $m=Nr+s$.  Now $f^{m}(Y) \subseteq Z$ if and only if every function vanishing on \(Z\) vanishes on \(f^m(Y)\), i.e.\ if and only if $q\circ f^s\circ g(h, r)=0$ for all $h\in \mathscr{I}_Z$.  But this last is a $p$-adic analytic function of $r$; hence it either has finitely many  zeros or is identically zero.  It follows that the set of zeros of \(q \circ f^s \circ g(h, -)\) is an \(N\)-periodic semilinear set for every \(h\) in \(\mathscr I_Z\).  By Lemma~\ref{semilinearintersect}, the intersection of these sets for all \(h \in \mathscr I_Z\) is a \(N\)-periodic semilinear set as well. This proves the theorem.
\end{proof}
\begin{remark}
When we say a function $f$ whose target is an $R$-module $M$ of infinite rank (e.g. the function $g(h, -)$ above) is $p$-adic analytic, we mean that for every continuous $R$-linear form $\ell: M\to R$, the composition $\ell\circ f$ is $p$-adic analytic.  
\end{remark}

Before proceeding to the global situations (Theorems \ref{nonreducedmordelllang} and  \ref{nonreducedmordelllangpds}), we need two lemmas which will allow us to reduce to Lemma \ref{localmordell-lang}.
\begin{lemma}[{\cite[Proposition 2.2]{bellghiocatucker}}]\label{bellghiocatuckerlemma}
Let $K$ be a finite extension of $\mathbb{Q}_p$ with valuation ring $R$, uniformizer $\pi$, and residue field $\kappa$.  Let $X$ be a smooth, finite-type $R$-scheme with geometrically connected fibers.  

Let $\phi: X\to X$ be an \'etale $R$-map, and let $x\in X(\kappa)$ be a point with $\phi(x)=x$.  Then there exists an isomorphism $$\widehat{\mathscr{O}_{X, x}}\simeq R[[x_1, \cdots, x_n]]$$ with $n=\dim(X)$ such that
\begin{enumerate}
\item The automorphism of $R[[x_1, \cdots, x_n]]$ induced by $\phi$ is given by some $f\in R\langle x_1, \cdots, x_n\rangle^n$, and
\item $f$ is affine-linear $\bmod\,\pi$.
\end{enumerate}
\end{lemma}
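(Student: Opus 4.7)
The plan is to derive the isomorphism from smoothness, verify the Tate-algebra condition using the finite-type structure of $\phi$, and finally arrange affine-linearity modulo $\pi$ by a careful choice of coordinates.  For the isomorphism, since $X$ is smooth of relative dimension $n$ over $R$ at $x$, I would choose elements $t_1,\dots,t_n\in\mathscr{O}_{X,x}$ whose reductions mod $\pi$ form a regular system of parameters on the special fiber; by Cohen's structure theorem these induce the desired isomorphism $\widehat{\mathscr{O}_{X,x}}\simeq R[[x_1,\dots,x_n]]$.

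For condition (1), the Tate-algebra convergence, I would pass to an affine open neighborhood $U=\Spec A$ of $x$ with presentation $A=R[y_1,\dots,y_N]/I$ arranged so that $y_1,\dots,y_n$ reduce to the $t_i$.  Smoothness implies that the Jacobian of a generating set of $I$ in the variables $y_{n+1},\dots,y_N$ is invertible at $x$, so the implicit function theorem for Tate algebras (a Hensel-type result valid for affinoid algebras) yields $g_j\in R\langle x_1,\dots,x_n\rangle$ with $y_j=g_j(t_1,\dots,t_n)$ in $\widehat{\mathscr{O}_{X,x}}$ for $j>n$.  Since $\phi$ is a morphism of finite-type $R$-schemes, $\phi^*(y_i)$ is a polynomial in the $y_j$ with coefficients in $R$; substituting $y_j=g_j$ for $j>n$ and using that polynomials with $R$-coefficients compose with Tate-algebra elements of sup-norm at most one to yield Tate-algebra elements, we obtain $\phi^*(t_i)\in R\langle x_1,\dots,x_n\rangle$, establishing condition (1).

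For condition (2), affine-linearity mod $\pi$, the reduction $\bar f$ of $f$ is a formal automorphism of $\kappa[[x_1,\dots,x_n]]$ fixing the origin, and its $1$-jet lies in the finite group $\GL_n(\kappa)\ltimes\kappa^n$.  Replacing $\phi$ with a suitable iterate (whose order depends only on $|\kappa|$) makes the linear part of $\bar f$ the identity, after which one attempts to kill the remaining higher-order terms by an order-by-order formal change of coordinates in the style of a Koenigs--Sternberg linearization, checking that this change of variables can be taken in the Tate algebra so that condition (1) is preserved.  I expect the main obstacle to be precisely this last step: formal linearization over a field of positive characteristic need not succeed (for instance $t\mapsto t+t^p$ over $\mathbb{F}_p$ is not formally conjugate to the identity), so one should combine passage to an iterate, a suitable rescaling of coordinates (equivalently, shrinking to a formal subdisk), and the $p$-adic analytic techniques of the sort invoked in the proof of Lemma~\ref{localmordell-lang} to simultaneously achieve affine-linearity modulo $\pi$ and preserve convergence in $R\langle x_1,\dots,x_n\rangle$.
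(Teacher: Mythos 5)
The paper's own ``proof'' of this lemma is a one-line deferral to Bell--Ghioca--Tucker's Proposition 2.2 with $p$ replaced by $\pi$, and the engine of that argument is a single coordinate rescaling that your proposal never actually performs. One chooses parameters $t_1,\dots,t_n$ so that $\phi^*(t_i)=f_i(t_1,\dots,t_n)$ with $f_i\in R[[x_1,\dots,x_n]]$ and $f_i(0)\in\pi R$ (the constant terms are divisible by $\pi$ because $\phi$ fixes $x$ on the special fiber), and then substitutes $x_j\mapsto \pi x_j$ and divides by $\pi$; equivalently, one identifies the residue disk $\set{z\equiv x \bmod \pi}$ with the unit polydisk. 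Under this substitution the degree-$d$ coefficients are multiplied by $\pi^{d-1}$, so in one stroke the series land in $R\langle x_1,\dots,x_n\rangle$ (coefficients tend to $0$) \emph{and} every term of degree $\geq 2$ becomes divisible by $\pi$, i.e.\ $f$ is affine-linear mod $\pi$. That is essentially the whole proof.

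Both of your substitute arguments have gaps. For (1): an implicit function theorem for affinoid algebras on the full unit polydisk requires the Jacobian to be invertible on that entire polydisk, not merely at $x$; invertibility at $x$ only yields formal solutions $g_j\in R[[x_1,\dots,x_n]]$, equivalently convergence on the smaller disk $\abs{x_i}\leq\abs{\pi}$. For instance, for $X=\Spec R[y,(1-y)^{-1}]$ with $t_1=y$, the regular function $(1-y)^{-1}=\sum_k y^k$ has coefficients that do not tend to $0$ and so does not lie in $R\langle y\rangle$; it becomes a Tate-algebra element only after the rescaling. For (2): the conclusion asks only that $f$ be affine-linear mod $\pi$ --- the linear part is allowed to be an arbitrary element of $\GL_n(\kappa)$ --- so no passage to an iterate and no linearization is needed at this stage (the iterate killing the linear part is taken later, in the proof of Lemma~\ref{localmordell-lang}). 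The Koenigs--Sternberg linearization of $\bar f$ over $\kappa[[x_1,\dots,x_n]]$ that you propose would, as you yourself note, fail in residue characteristic $p$, and it aims at a conclusion strictly stronger than what is required. The ``rescaling / shrinking to a subdisk'' you mention in passing at the very end is not an optional auxiliary device to be combined with other techniques: it is the entire proof of both (1) and (2).
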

\begin{proof}
The proof is exactly the same as \cite[Proposition 2.2]{bellghiocatucker}, replacing $p$ with $\pi$.
\end{proof}
\begin{lemma}\label{nbdslemma}
Let $R$ be a discrete valuation ring and $X/R$ a separated, finite-type $R$-scheme.  Let $Y, Z$ be closed subschemes of $X$.  Suppose that $Y$ has exactly one associated point (in particular, $Y$ is irreducible).  Let $y\in Y$ be any closed point with ideal sheaf $\mathfrak{m}_y$; let $y_n\subset Y$ be the closed subscheme associated to $\mathfrak{m}_y^n$.  Then $Y\subset Z$ if and only if $y_n\subset Z$ for all $y$ and $n$.
\end{lemma}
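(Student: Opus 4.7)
The ``only if'' direction is immediate: each $y_n$ is by construction a closed subscheme of $Y$, so $Y \subseteq Z$ forces $y_n \subseteq Z$ for every closed point $y$ and every $n \geq 1$.

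For the converse, I would localize on $X$ to reduce to the affine case $X = \Spec A$ with $Y = \Spec(A/I)$. A closed point $y \in Y$ then corresponds to a maximal ideal $\bar{\fm} \subset A/I$; choosing a lift $\fm \subset A$, the defining ideal of $y_n$ in $A$ is $\fm^n + I$. The hypothesis $y_n \subseteq Z$ therefore rewrites as the inclusion $\cI_Z \subseteq \fm^n + I$ in $A$, or equivalently that the image of $\cI_Z$ in $A/I$ is contained in $\bar{\fm}^n$.

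Next I would intersect over all $n$ and localize at $\bar{\fm}$: Krull's intersection theorem in the Noetherian local ring $(A/I)_{\bar{\fm}} \cong \cO_{Y,y}$ shows that the image of $\cI_Z$ in $\cO_{Y,y}$ vanishes, for every closed point $y \in Y$. To finish, I would use that the closed points of $Y$ are precisely the maximal ideals of $A/I$, so that a standard element-level argument (an element of $A/I$ whose annihilator is contained in no maximal ideal must be zero) upgrades this stalk-wise vanishing to vanishing of the image of $\cI_Z$ in $A/I$ itself. This gives $\cI_Z \subseteq I = \cI_Y$, i.e.\ $Y \subseteq Z$.

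There is no serious obstacle: the argument is just Krull's intersection theorem combined with the standard local-to-global principle for ideals in a Noetherian ring. Notably, the hypothesis that $Y$ has a unique associated point does not actually enter the proof; I expect it is imposed because it is the natural condition to match $Y$'s completion at $y$ with the formal setup of Lemma~\ref{localmordell-lang} in the downstream applications.
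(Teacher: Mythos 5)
Your argument is correct for the statement you actually prove, but that statement is weaker than the one the paper needs, and the gap is exactly the hypothesis you dismiss at the end. The lemma is intended — and is used in Corollary~\ref{manyassociatedlemma} and Proposition~\ref{localassociated} — in the form: for a \emph{single} fixed closed point $y\in Y$, the containments $y_n\subseteq Z$ for all $n$ already force $Y\subseteq Z$. That is what makes the reduction to Lemma~\ref{localmordell-lang} possible, since one only arranges for an iterate of $f$ to fix \emph{finitely many} closed points, one in the closure of each associated point, and then checks containment in the corresponding completed local rings. Your proof instead quantifies over \emph{all} closed points of $Y$; that version is indeed true with no hypothesis on associated points, but it cannot be fed into the downstream argument. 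The unique-associated-point hypothesis enters precisely where your proof sidesteps it: it guarantees that $\mathscr{O}_Y\to\mathscr{O}_{Y,y}$ is injective for a single $y$ (a nonzero section of $\mathscr{O}_Y$ has support containing the closure of the unique associated point, hence all of $Y$, hence $y$), which combined with Krull's intersection theorem at that one point gives $\ker(\mathscr{O}_X\to\widehat{\mathscr{O}_{Y,y}})=\ker(\mathscr{O}_X\to\mathscr{O}_Y)$. So the missing idea is an explanation of why one closed point suffices — which is the actual content of the lemma.

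A secondary, fixable point: the reduction to the affine case is not as innocuous as stated. For an affine open $U\subseteq X$, a closed point of $Y\cap U$ (equivalently, a maximal ideal of $A/I$ for that chart) need not be a closed point of $Y$; for instance, if $Y$ is a section $\Spec R\hookrightarrow X$ of a proper $X/R$ and $U$ meets only the generic fibre, then $Y\cap U$ is the generic point of $Y$, closed in $U$ but not in $Y$, and your hypothesis says nothing about it. For your all-closed-points version the correct globalization is: the image of $\mathscr{I}_Z$ in $\mathscr{O}_Y$ is a coherent sheaf whose stalks vanish at every closed point of $Y$ (by Krull), and a nonzero coherent sheaf on the quasi-compact scheme $Y$ has nonempty closed support, which necessarily contains a closed point of $Y$.
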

\begin{proof}
The hypothesis is that $\mathscr{O}_X\to\widehat{\mathscr{O}_{Y, y}}/\mathfrak{m}_y^n$ has $\mathscr{I}_{Z}$ in its kernel for all $n$, i.e. that the composition $$\mathscr{I}_Z\to \mathscr{O}_X\to \widehat{\mathscr{O}_{Y,y}}$$ is zero.  We wish to show that $\mathscr{I}_Z$ is in the kernel of the quotient map $\mathscr{O}_X\to \mathscr{O}_Y$.  But consider the diagram
$$\xymatrix{
\mathscr{O}_X \ar[rdd]\ar[r] & \mathscr{O}_Y \ar[d]^{\pi_1}\\
& \mathscr{O}_{Y, y}\ar[d]^{\pi_2}\\
& \widehat{\mathscr{O}_{Y, y}}.
}$$
We observe that $\pi_1$ is injective as $Y$ has one associated point, and $\pi_2$ is injective by Krull's intersection theorem.  Thus $$\ker(\mathscr{O}_X\to \widehat{\mathscr{O}_{Y,y}})=\ker(\mathscr{O}_X\to \mathscr{O}_Y),$$ completing the proof.
\end{proof}
\begin{corollary}\label{manyassociatedlemma}
Let $R$ be a discrete valuation ring and $X/R$ a separated, finite-type $R$-scheme.  Let $Y, Z$ be closed subschemes of $X$.   Let ${y_i}\subset Y$ be a collection of closed points with ideal sheaf $\mathfrak{m}_{y_i}$, such that the closure of each associated point of $Y$ contains some $y_i$.  Then $Y\subset Z$ if and only if $Y\cap \text{Spf}(\widehat{\mathscr{O}_{Y, y_i}})\subset Z\cap \text{Spf}(\widehat{\mathscr{O}_{Y, y_i}})$ for all $i$.
\end{corollary}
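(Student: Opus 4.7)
The forward direction is immediate, since any inclusion $Y \subset Z$ of closed subschemes of $X$ pulls back to an inclusion on any formal neighborhood. For the converse, my plan is to reduce the global inclusion $Y \subset Z$ to vanishing statements in the completions $\widehat{\mathscr{O}_{Y, y_i}}$, using associated-prime injectivity to bridge the gap between one and many associated points, in the same spirit as Lemma~\ref{nbdslemma}.

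First, $Y \subset Z$ is equivalent to the composition $\mathscr{I}_Z \hookrightarrow \mathscr{O}_X \twoheadrightarrow \mathscr{O}_Y$ being the zero map. By the standard fact that for a Noetherian ring $A$ the natural map $A \to \prod_{\mathfrak{p} \in \Ass(A)} A_{\mathfrak{p}}$ is injective (which follows from prime avoidance applied to the annihilator of any nonzero element, together with the fact that every zero-divisor lies in an associated prime), it suffices to check this vanishing after localizing $\mathscr{O}_Y$ at each associated point $\xi$ of $Y$. For each such $\xi$ the hypothesis provides a closed point $y_i \in \overline{\{\xi\}}$, and $\mathscr{O}_{Y, \xi}$ is then a further localization of $\mathscr{O}_{Y, y_i}$, so it is in turn enough to show that $\mathscr{I}_Z \to \mathscr{O}_{Y, y_i}$ is zero for every $i$. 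Finally, Krull's intersection theorem gives an injection $\mathscr{O}_{Y, y_i} \hookrightarrow \widehat{\mathscr{O}_{Y, y_i}}$, just as in the proof of Lemma~\ref{nbdslemma}, and the hypothesis of the corollary is precisely the assertion that $\mathscr{I}_Z$ maps to zero in $\widehat{\mathscr{O}_{Y, y_i}}$; composing with the injection then yields the desired vanishing.

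I do not foresee a serious obstacle. The whole argument is a formal enhancement of Lemma~\ref{nbdslemma}, with the only new conceptual ingredient being the associated-prime injectivity $\mathscr{O}_Y \hookrightarrow \prod_{\xi \in \Ass(Y)} \mathscr{O}_{Y, \xi}$, which is exactly what allows one to go from the single-associated-point hypothesis of the previous lemma to the present statement. The main bookkeeping point is to confirm that the given condition $Y \cap \text{Spf}(\widehat{\mathscr{O}_{Y, y_i}}) \subset Z \cap \text{Spf}(\widehat{\mathscr{O}_{Y, y_i}})$ is correctly interpreted as $\mathscr{I}_Z \cdot \widehat{\mathscr{O}_{X, y_i}} \subset \mathscr{I}_Y \cdot \widehat{\mathscr{O}_{X, y_i}}$, i.e.\ as the vanishing of $\mathscr{I}_Z$ in $\widehat{\mathscr{O}_{Y, y_i}}$, after which everything collapses to the three-step reduction above.
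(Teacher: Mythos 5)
Your proof is correct, but it takes a genuinely different route from the paper's. The paper argues by induction on the number of associated points of $Y$: it picks an associated point whose closure contains no others, splits $\mathscr{O}_Y$ into the quotient by the ideal of sections supported on that closure and the scheme-theoretic closure of that point, observes that each piece has strictly fewer associated points, and feeds the base case into Lemma~\ref{nbdslemma}. You instead give a one-shot argument: the global inclusion $Y\subset Z$ is the vanishing of $\mathscr{I}_Z\to\mathscr{O}_Y$, which can be tested after localizing at the associated points because $\mathscr{O}_Y\hookrightarrow \prod_{\xi\in\Ass(Y)}\mathscr{O}_{Y,\xi}$ (zero-divisors lie in associated primes, plus prime avoidance), and each $\mathscr{O}_{Y,\xi}$ is a localization of some $\mathscr{O}_{Y,y_i}$, which in turn injects into $\widehat{\mathscr{O}_{Y,y_i}}$ by Krull intersection. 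Your version subsumes Lemma~\ref{nbdslemma} rather than invoking it, and it avoids the mildly delicate bookkeeping in the paper's decomposition (verifying that $\Spec_X(\mathscr{O}_Y/\mathscr{I})$ has exactly one fewer associated point); what the paper's route buys is the reuse of the already-proved one-associated-point lemma and a purely scheme-theoretic induction. Your interpretation of the hypothesis as the vanishing of the composite $\mathscr{I}_Z\to\mathscr{O}_X\to\widehat{\mathscr{O}_{Y,y_i}}$ matches the convention the paper itself uses in the proof of Lemma~\ref{nbdslemma}, so there is no mismatch there.
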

\begin{proof}
The case where $Y$ has one associated point is exactly Lemma \ref{nbdslemma}.  Without loss of generality, we may suppose that $y_1\subset Y$ has no other associated points in its closure.  Let $\mathscr{I}$ be the ideal sheaf in $\mathscr{O}_Y$ consisting of functions whose support is contained in $\overline{y_1}$.  Then $Y$ is contained in $Z$ if and only if both $\Spec_X(\mathscr{O}_Y/\mathscr{I})$ and $\overline{y_1}$ (the scheme-theoretic image of $y_1$ in $Y$) are contained in $Z$.  But $\overline{y_1}$ has one associated point, and $\Spec_X(\mathscr{O}_Y/\mathscr{I})$ has one fewer associated point than $Y$, so we are done by induction on the number of associated points.
\end{proof}
\begin{proposition}
\label{localassociated}
Let $K$ be a finite extension of $\mathbb{Q}_p$ with valuation ring $R$, uniformizer $\pi$, and residue field $k$.  Let $X$ be a smooth, finite-type $R$-scheme with geometrically connected fibers.  Let $Y,Z$ be closed subschemes of $X$; let $f: X\to X$ be an \'etale $R$-endomorphism.  Then the set of $m \geq 0$ such that $f^m(Y) \subset Z$ is a one-sided semilinear set, whose period depends only on the associated points of $Y$.

Furthermore, if \(f : X \to X\) is an automorphism, then the set of \(m\) such that \(f^m(Y) \subset Z\) is a two-sided semilinear set, with length bounded in the same way. 
\end{proposition}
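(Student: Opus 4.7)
The plan is to reduce to the setting of Lemma~\ref{localmordell-lang} by means of Corollary~\ref{manyassociatedlemma}, after replacing $f$ with a suitable iterate that simultaneously fixes closed points covering the associated points of $Y$ on the special fiber.

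For each of the finitely many associated points $\xi$ of $Y$, choose a closed point $y_\xi \in \overline{\xi}$. After a finite extension of $K$ (which affects $|\kappa|$ and $|\pi|_p$ only in a bounded way), we may assume every $y_\xi$ is $\kappa$-rational. Since $\kappa$ is finite, the orbit of each $y_\xi$ under $f$ acting on the special fiber $X(\kappa)$ is finite, so there exists $N_0 \geq 1$ with $f^{N_0}(y_\xi) = y_\xi$ on the special fiber for every $\xi$. Lemma~\ref{bellghiocatuckerlemma} applied to $f^{N_0}$ at each $y_\xi$ then yields identifications $\widehat{\mathcal{O}_{X,y_\xi}} \cong R[[x_1,\dots,x_n]]$ under which $f^{N_0}$ is represented by a convergent power series $g_\xi$ that is affine-linear modulo $\pi$, placing us in the setting of Lemma~\ref{localmordell-lang}.

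Next, partition the iterates by residue class modulo $N_0$. For $0 \leq s < N_0$, set $Z_s = (f^s)^{-1}(Z)$. Then $f^{N_0 r + s}(Y) \subseteq Z$ is equivalent to $Y \subseteq (f^{N_0})^{-r}(Z_s)$, which by Corollary~\ref{manyassociatedlemma} (applied to $Y$ using the closed points $y_\xi$) holds precisely when
\[
(f^{N_0})^r\bigl(Y \cap \on{Spf}(\widehat{\mathcal{O}_{X,y_\xi}})\bigr) \subseteq Z_s \cap \on{Spf}(\widehat{\mathcal{O}_{X,y_\xi}})
\]
for every $\xi$. Lemma~\ref{localmordell-lang}, applied to $g_\xi$ and the formal closed subschemes $Y \cap \on{Spf}(\widehat{\mathcal{O}_{X,y_\xi}})$ and $Z_s \cap \on{Spf}(\widehat{\mathcal{O}_{X,y_\xi}})$, then shows that for each fixed $\xi$ the set of such $r$ is an $N_1$-periodic two-sided semilinear set, with $N_1$ depending only on $|\kappa|$ and $|\pi|_p$. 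By Lemma~\ref{semilinearintersect}, their intersection $A_s = \set{r \in \Z : (f^{N_0})^r(Y) \subseteq Z_s}$ is itself $N_1$-periodic semilinear.

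Assembling across residue classes, $\set{m \geq 0 : f^m(Y) \subseteq Z} = \bigsqcup_{s=0}^{N_0-1} \bigl((N_0 A_s + s) \cap \Z_{\geq 0}\bigr)$ is a one-sided semilinear set with period dividing $N_0 N_1$; in the automorphism case the same decomposition with $r$ and $m$ ranging over $\Z$ gives a two-sided semilinear set. Since $N_0$ is determined by the $f$-orbits of the chosen $y_\xi$ on the special fiber, and these may be chosen based solely on the associated points of $Y$, the period bound depends only on the associated points of $Y$, as required. The main subtlety I expect is that the $y_\xi$'s are typically not themselves fixed by $f$; passing to $f^{N_0}$ and decomposing by residue classes $s$ circumvents this, but one must check that Corollary~\ref{manyassociatedlemma} and Lemma~\ref{semilinearintersect} combine the local semilinearities into a global one uniformly across all residue classes.
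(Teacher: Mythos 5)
Your overall strategy---pick closed points in the closures of the associated points of $Y$, pass to an iterate of $f$ fixing them, reduce to Lemma~\ref{localmordell-lang} via Lemma~\ref{bellghiocatuckerlemma} and Corollary~\ref{manyassociatedlemma}, and reassemble the residue classes using Lemma~\ref{semilinearintersect}---is exactly the paper's, and the automorphism case of your argument goes through.

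There is, however, a genuine gap in the endomorphism case, which is the main assertion of the proposition. You claim that because the orbit of $y_\xi$ in the finite set $X(\kappa)$ is finite, there exists $N_0\geq 1$ with $f^{N_0}(y_\xi)=y_\xi$. That inference is valid only when $f$ acts bijectively on $X(\kappa)$; for a general \'etale endomorphism a point with finite forward orbit is merely \emph{eventually} periodic. Concretely, for $f=[2]$ on $\mathbb{G}_m$ over $\mathbb{F}_q$ with $q$ odd, any point that is not in the image of repeated squaring is never returned to itself by any iterate. Without $f^{N_0}(y_\xi)=y_\xi$ you cannot invoke Lemma~\ref{bellghiocatuckerlemma} at $y_\xi$, and the local reduction collapses. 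The paper avoids this by working with the eventual image $I=\bigcap_n f^n(X(\mathbb{F}_{q^r}))$, which $f$ \emph{permutes}, so that some $f^N$ fixes $I$ pointwise; it then studies $g^m(f^i(Y))$ for $g=f^N$ and $0\leq i<N$, whose relevant closed points do lie in $I$. Equivalently, you can repair your argument by replacing $Y$ with $f^a(Y)$ for $a$ large enough that the points $f^a(y_\xi)$ are genuinely periodic (they lie in the closures of the associated points of $f^a(Y)$), absorbing the finitely many iterates $m<a$ into the finite part of the semilinear set. A second, cosmetic point: in the endomorphism case $(f^{N_0})^{-r}$ and the set $A_s$ should only be interpreted for $r\geq 0$ globally (negative powers make sense only after localizing at the fixed points, where $g_\xi$ is invertible).
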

\begin{proof}
The idea of the proof is to replace $f$ with an iterate so that it fixes a point in the closure of every associated point of $Y$.  Then Lemma \ref{manyassociatedlemma} reduces the claim to a local statement, which we may prove via Lemma \ref{localmordell-lang}.  We now give the details.

Let $r$ be such that for each associated point $y$ of $Y$, $\bar{y}$ contains a $\mathbb{F}_{q^r}$-point.  The map $X(\mathbb{F}_{q^r})\to X(\mathbb{F}_{q^r})$ induced by $f$ has eventual image $$I=\bigcap_n f^n(X(\mathbb{F}_{q^r}))$$ which is permuted by $f$; hence $f^N$ for some $N\gg 0$ fixes the eventual image $I$. Let $R'$ be an unramified extension of $R$ such that each point of $I$ is the image of some $R'$ point of $X$; replace $R$ with $R'$.  Let $g=f^N$; it suffices to show that the set of $m$ such that $g^m(f^i(Y))\subset Z$ for $i=0, \cdots, N-1$ is a semilinear set.

Now given $x\in I$ with ideal sheaf $\mathfrak{m}_x$, fix an isomorphism $$\widehat{\mathscr{O}_{X, x}}=\varprojlim \mathscr{O}_X/\mathfrak{m}_x^n\simeq R[[x_1, \cdots, x_n]].$$  As $f$ is \'etale, one has that $f^N$ induces an isomorphism $$\widehat{\mathscr{O}_{X, x}}\to \widehat{\mathscr{O}_{X, x}}$$ for each $x\in I$.  By Lemma \ref{bellghiocatuckerlemma}, the hypotheses of Lemma \ref{localmordell-lang} are satisfied for this map.  Hence the set of $m$ such that $g^m(f^i(Y))\cap \text{Spf}(\widehat{\mathscr{O}_{X, x}})\subset Z\cap \text{Spf}(\widehat{\mathscr{O}_{X, x}})$ is semilinear for each $x\in I$.  By Lemma \ref{manyassociatedlemma}, this implies the global result we desired.
\end{proof}

\begin{remark}
In fact, the period of the semilinear sets in the statement above does not depend on the supports of the associated points of $Y, Z$, but rather on $X$ and the minimal $k$ such that $$\overline{\xi}(\mathbb{F}_{q^k})\not=\emptyset$$ for all associated points $\xi$ of $Y, Z$.  Here $\overline{\xi}$ denotes the Zariski-closure of $\xi$.  This yields the following corollary, which will prove useful later on.
\end{remark}

\begin{corollary}[Uniform Mahler-Skolem-Lech]\label{uniformmsl}
Let $f: \mathbb{Z}^r\to \mathbb{Z}^r$ be a linear automorphism, and let $x, y\in \mathbb{Z}^r$ be two elements.  Then $$A(x, y):=\{n\mid f^n(x)=y\}$$ is two-sided semilinear with period bounded only in terms of $r$.
\end{corollary}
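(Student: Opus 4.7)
The plan is to realize the linear action geometrically as an automorphism of affine $r$-space over a $p$-adic ring, and then apply Proposition \ref{localassociated} together with the remark following it. First I would choose a prime $p$ once and for all---say $p=2$---and identify our linear automorphism with a matrix $A\in\GL_r(\mathbb{Z})$; since $\det(A)=\pm 1$ is a unit in $\mathbb{Z}_p$, the matrix $A$ extends to an automorphism (in particular an \'etale endomorphism) $f$ of the smooth $\mathbb{Z}_p$-scheme $X:=\mathbb{A}^r_{\mathbb{Z}_p}$, whose fibers are geometrically connected. Viewing $x,y\in\mathbb{Z}^r$ as $\mathbb{Z}_p$-points of $X$ and letting $Y,Z$ be the corresponding reduced closed subschemes, the condition $f^n(x)=y$ is equivalent to $f^n(Y)\subseteq Z$, so $A(x,y)$ coincides with the set $A_f(Y,Z)$ studied in Proposition \ref{localassociated}.

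Since $f$ is an automorphism, Proposition \ref{localassociated} gives that $A_f(Y,Z)$ is a two-sided semilinear set. By the remark immediately following that proposition, the period depends only on $X$ and on the minimal $k$ for which $\overline{\xi}(\mathbb{F}_{p^k})\neq\emptyset$ for every associated point $\xi$ of $Y$ or $Z$. In our setting $Y$ and $Z$ are single $\mathbb{Z}$-points of $\mathbb{A}^r$, so each such $\xi$ reduces to an $\mathbb{F}_p$-point of the special fiber, and $k=1$.

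It remains to check that with $p$ fixed, the resulting bound depends only on $r$. Tracing the proof of Proposition \ref{localassociated}, the integer $N$ required to make $f^N$ fix the eventual image of $X(\mathbb{F}_p)$ divides $|\GL_r(\mathbb{F}_p)|$, since $f$ acts on $\mathbb{A}^r(\mathbb{F}_p)=\mathbb{F}_p^r$ through $\bar A\in\GL_r(\mathbb{F}_p)$; this bound depends only on $r$. The remaining steps from Lemma \ref{localmordell-lang}---further increasing $N$ to kill higher $\pi$-adic precision and applying Poonen's analytic interpolation---contribute factors depending only on $\#|\kappa|=p$ and $|\pi|_p=1/p$, both fixed.

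The main subtlety is verifying that no hidden dependence on $f$, on the specific choice of $x$ and $y$, or on the individual completions $\widehat{\mathscr{O}_{X,\bar x}}$ slips into the bound. This is resolved by the observation that, once $p$ is fixed, any $f\in\GL_r(\mathbb{Z})$ acts on the finite set $\mathbb{A}^r(\mathbb{F}_p)$ through the fixed finite group $\GL_r(\mathbb{F}_p)$, and the completions at any $\mathbb{Z}$-point of $\mathbb{A}^r_{\mathbb{Z}_p}$ are all isomorphic to $\mathbb{Z}_p[[t_1,\dots,t_r]]$ after translating the point to the origin; so the bounds genuinely depend only on $r$.
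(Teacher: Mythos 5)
Your proof is correct and takes essentially the same route as the paper, whose entire argument is ``immediate from the remark above, after extending scalars to $\mathbb{Z}_3$''; you have simply unwound that remark and traced the dependence of the bound on $r$. The only difference is your choice of $p=2$ rather than $p=3$, which is harmless here since Lemma \ref{localmordell-lang} already absorbs the weaker $2$-adic convergence by allowing $N$ to be multiplied by a power of $p$ depending only on $|\pi|_p$.
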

\begin{proof}
This is immediate from the remark above, after extending scalars to $\mathbb{Z}_3$.  (It can also be checked by easy linear algebra.)
\end{proof}

The results above imply Theorems \ref{nonreducedmordelllang} and \ref{nonreducedmordelllangpds}.
The final step in the argument is to reduce the theorems for varieties over arbitrary fields of characteristic \(0\) to varieties over extensions of the \(p\)-adics.  To do this, it is necessary to produce a single finite-type integral \(\Z\)-algebra \(R\) over which the infinitely many closed subschemes \(Y_j\) are all defined. 

\thmmordelllang*
This is a special case of:
\thmmordelllangpds*

\begin{proof}
The statement is insensitive to base change along a field extension $k\subset k'$, so we may without loss of generality assume $k$ is uncountable.

We first prove claim (1) of Theorem~\ref{nonreducedmordelllangpds}.
Since the schemes \(Y_j\) are all defined by ideals of the form
\[
\mathscr I_{Y_j} = \sum_i \mathscr I_{Y_i^0}^{n_i},
\]
for some finite set of closed subschemes \(Y_i^\circ\), there exists a finite-type integral $\mathbb{Z}$-algebra $R$, $R$-schemes $X', Y_i', Z'$, an \'etale endomorphism $f'$ of $X'$, and a flat map $\iota: \on{Spec}(k)\to \on{Spec}(R)$ such that $X, Y_i, Z, f$ are obtained by base change along $\iota$.  Without loss of generality, $X'$ is smooth with geometrically connected fibers.

The claim of the theorem is then a consequence of Proposition~\ref{localassociated} as follows.  For each $j$, let $Y_j''$ be the subscheme of $X'$ defined by the sheaf of algebras $\mathscr{O}_{Y_j'}/\mathscr{I}$ where $\mathscr{I}$ consists of all functions whose support does not contain the image of $x$ in $X'$.  (This modification serves to eliminate the associated points whose closure does not contain $x$.) Then by hypothesis, the set of associated points of the $\{Y_j''\}$ is containted in the set $\{\xi \in \Xi\mid x\in \overline{\xi}\}$, and hence is finite.  So the $\{Y_j''\}$ have finitely many associated points, and thus by Proposition \ref{localassociated} we conclude that the set of $\{n\mid \phi^n(Y_j'')\subset Z'\}$ is semilinear of period independent of $j$.  Now base-changing to $k$ and restricting to the local ring at $x$ completes the proof.

For claim (2), we proceed as above; as before, there exists a finite-type integral $\mathbb{Z}$-algebra $R$, $R$-schemes $X', Y_i', Z'$, an \'etale endomorphism $f'$ of $X'$, and a flat map $\iota: \on{Spec}(k)\to \on{Spec}(R)$ such that $X, Y_i, Z, f$ are obtained by base change along $\iota$.  Without loss of generality, $X'$ is smooth with geometrically connected fibers.  On the the generic fiber of $X'$, the $\{Y_j'\}$ have only finitely many associated points.  Hence replacing $Y_j'$ with the subscheme of $X'$ defined by $\mathscr{O}_{Y_j'}/\mathscr{I}$, where $\mathscr{I}$ is the ideal sheaf consisting of functions whose support is on the special fiber of $X'$, we may assume that the $\{Y_j'\}$ have finitely many associated points, all of which are on the generic fiber.  Note that $Y_j$ is still the base change of $Y_j'$ along the map $R\to k$.  Now the result follows again from Proposition \ref{localassociated}.
\end{proof}

In general, the set \(\Xi\) of associated points of the closed subschemes defined by \(\sum_{i=1}^r \mathscr{I}_{Y_i^\circ}^{n_i}\) need not be finite, as we saw in Example~\ref{manyassociatedpoints}.  In our applications of Theorem~\ref{nonreducedmordelllangpds}, we will only use part (1) of the theorem, which is generally easier to check.  However, in some circumstances, the stronger  hypothesis needed for part (2) holds automatically.

\begin{theorem}
\label{finiteassociate}
Let \(X\) be a smooth variety, and let \(Y_1^\circ,\ldots,Y_r^\circ\) be a finite set of closed subschemes of \(X\).  Let \(Y_j\) be an infinite set of closed subschemes of the form 
\[
\sum_{i=1}^r \mathscr{I}_{Y_i^\circ}^{n_i},
\]
for integers \(n_i \geq 0\).  If any of the following conditions hold, then the set of associated points of \(Y_j\) is finite.
\begin{enumerate}
\item Each \(Y_i^\circ\) is a Cartier divisor, the \(Y_i^\circ\) intersect transversely, and \(Y_j\) is any collection of subschemes of the form \(\sum_{i=1}^r \mathscr{I}_{Y_i^\circ}^{n_i}\).
\item \(r=2\) and the \(Y_j\) are defined by ideal sheaves
\[
\mathscr I_{Y_1^\circ} + \mathscr I_{Y_2^\circ}^n
\]
\item All but finitely many of the \(Y_j\) are \(0\)-dimensional.
\end{enumerate}
\end{theorem}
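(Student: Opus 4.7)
The plan is to treat the three cases separately; in each, I reduce to an elementary local computation or, for case (2), to Brodmann's theorem on the eventual stability of associated primes of powers of a fixed ideal.

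\textbf{Case (3)} is essentially immediate. For nonempty $Y_j$ one must have $n_i \geq 1$ for all $i$ (otherwise $\mathscr{I}_{Y_i^\circ}^0 = \mathscr{O}_X$ forces $Y_j = \emptyset$), so the radical of the defining ideal equals $\sum_i \mathscr{I}_{Y_i^\circ}$ and the support of $Y_j$ is $\bigcap_i Y_i^\circ$. Associated points lie in the support, and those of a $0$-dimensional scheme are closed points. As $\bigcap_i Y_i^\circ$ has only finitely many $0$-dimensional components, the union over all $j$ of the associated points of the $Y_j$ is finite.

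\textbf{Case (1)} proceeds by a local calculation. At any point $p$ in the support of some $Y_j$, transversality lets me choose local (étale-local) equations $f_1,\ldots,f_r$ for the $Y_i^\circ$ passing through $p$ that form a regular sequence in $\mathscr{O}_{X,p}$; the remaining $Y_i^\circ$ avoid $p$ and contribute the unit ideal. The stalk of the ideal defining $Y_j$ at $p$ is then generated by the regular sequence $\{f_i^{n_i}\}_{i \in S(p)}$, where $S(p) := \{i : p \in Y_i^\circ,\ n_i \geq 1\}$. Hence the stalk of $\mathscr{O}_{Y_j}$ at $p$ is Cohen--Macaulay with no embedded primes, and its unique minimal (hence associated) prime is $(f_i : i \in S(p))$, corresponding to $\bigcap_{i \in S(p)} Y_i^\circ$ near $p$. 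Globally, the associated points of each $Y_j$ are therefore generic points of components of intersections $\bigcap_{i \in T} Y_i^\circ$ for subsets $T \subseteq \{1,\ldots,r\}$; only finitely many such generic points occur in total.

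\textbf{Case (2)} is where I would invoke Brodmann's theorem. The subscheme $Y_j$ with ideal $\mathscr{I}_{Y_1^\circ} + \mathscr{I}_{Y_2^\circ}^n$ is identified with the closed subscheme of $W := Y_1^\circ$ cut out by $J^n$, where $J := \mathscr{I}_{Y_2^\circ} \cdot \mathscr{O}_W$ is the image of $\mathscr{I}_{Y_2^\circ}$ in $\mathscr{O}_W$ (using that the image of $\mathscr{I}_{Y_2^\circ}^n$ in $\mathscr{O}_W$ is $J^n$). The associated points of $Y_j$ viewed in $X$ coincide with those of this subscheme of $W$, since the relevant sheaf is supported on $W$. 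Covering the noetherian scheme $W$ by finitely many affine opens and applying Brodmann's theorem on each---namely, that for any ideal $I$ in a noetherian ring $R$ the union $\bigcup_n \Ass(R/I^n)$ is finite---I conclude that $\bigcup_n \Ass(\mathscr{O}_W/J^n)$ is a finite subset of $W$. The main external input of the argument is Brodmann's theorem, and verifying its applicability after the reduction to $W$ is the one step requiring care; the other two cases reduce to direct local computations.
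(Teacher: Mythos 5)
Your proof is correct and takes essentially the same route as the paper's: case (1) via the observation that powers of a (transversality-supplied) regular sequence cut out an unmixed Cohen--Macaulay quotient, case (2) by rewriting $\mathscr O_X/(\mathscr I_{Y_1^\circ}+\mathscr I_{Y_2^\circ}^n)$ as $\mathscr O_W/J^n$ and invoking Brodmann's theorem on the eventual stability of $\Ass$ of powers of an ideal, and case (3) by noting that the associated points of a $0$-dimensional scheme are just its (finitely many, fixed-in-$j$) components. The only difference is one of detail: you prove the local Cohen--Macaulay statement that the paper merely records as an observation.
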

\begin{proof}
In each case, it is sufficient to show that there are only finitely many associated points in any affine chart.

Case (1) follows from the observation that if \(R\) is a regular ring and \(f_i\) is a sequence of regular elements, then the set
\[
\bigcup_{n_1,\ldots,n_r} \Ass(R/(f_1^{n_1},\ldots,f_r^{n_r}))
\]
is finite.

For case (2), we must show that if \(R\) is a noetherian ring, and \(I\) and \(J\) are two ideals in \(R\), the set
\[
\bigcup_{n} \Ass(R/(I+J^n))
\]
is finite.  Observe that \(R/(I+J^n) \cong (R/I)/(J^n(R/I))\) as \(R\)-modules.  But for any \(R\)-module \(M\) and ideal \(I\), the associated primes \(\Ass(M/I^nM)\) stabilize for large \(n\)~\cite{brodmann}.

Case (3) is immediate since the only possible associated points are the components of \(Y_j\).
\end{proof}

\section{Local separation by blow-ups}
\label{sectseparation}

We now deduce some geometric consequences of the results of the previous section.
Before beginning the proof of Theorem~\ref{separationthm}, we collect a few preliminary results.  First, it will often be convenient to invoke the following.
\begin{theorem}[{\cite[3.4.1]{kollarresolution}}]
\label{equivariantres}
Suppose that \(\phi : X \to X\) is an automorphism of a singular variety.  There exists a birational map \(\pi : Y \to X\) such that \(Y\) is smooth and \(\phi\) lifts to an automorphism of \(Y\).
\end{theorem}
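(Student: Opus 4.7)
The plan is to deduce the statement from a strong form of resolution of singularities in characteristic zero, namely one that is functorial with respect to smooth morphisms. This is exactly what Kollár's work (and parallel work of Bierstone--Milman and W\l odarczyk) provides: there is a resolution procedure \(X \mapsto (\pi_X : \tilde X \to X)\) such that for every smooth morphism \(f : U \to X\) of varieties in characteristic zero, the canonical map \(\tilde U \to \tilde X \times_X U\) is an isomorphism and is compatible with \(\pi_X\) and \(\pi_U\).

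Given this, I would argue as follows. Let \(\pi : Y \to X\) be the functorial resolution, so that \(Y\) is smooth and \(\pi\) is birational (and an isomorphism over the smooth locus of \(X\)). An automorphism \(\phi : X \to X\) is in particular an isomorphism and hence a smooth morphism of \(X\) to itself. Applying the functoriality of resolution to \(\phi\) yields a canonical isomorphism \(Y = \tilde X \cong \tilde X \times_{X,\phi} X\) compatible with the structure map to \(X\); equivalently, the pullback square formed by \(\pi\) and \(\phi\) has a canonical lift \(\tilde\phi : Y \to Y\) with \(\pi \circ \tilde\phi = \phi \circ \pi\). Applying the same construction to \(\phi^{-1}\) produces an inverse for \(\tilde\phi\), so \(\tilde\phi\) is an automorphism of \(Y\), which is what we want.

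There is essentially no obstacle to this argument once functorial resolution is taken as a black box; the entire content is the existence of a resolution procedure compatible with smooth base change, which is the deep input from \cite{kollarresolution}. One minor point worth noting in the write-up is that if one only cares about the conclusion for a single \(\phi\), it suffices to use equivariant (rather than fully functorial) resolution for the cyclic group generated by \(\phi\) when \(\phi\) has finite order; in the infinite order case, functoriality with respect to the smooth morphism \(\phi\) is what makes the argument go through uniformly. In either case the result is immediate from the cited theorem.
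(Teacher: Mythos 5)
Your argument is correct and coincides with what the paper does: the paper offers no proof of its own, simply citing Koll\'ar's functorial resolution theorem, and your deduction (apply functoriality of resolution with respect to the smooth morphism \(\phi\) to get a canonical lift, then apply it to \(\phi^{-1}\) to get the inverse) is exactly the standard way that citation yields the stated equivariance. Nothing further is needed.
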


We now begin the proof of the local result on separation by blow-ups.  
For the remainder of this section, we work in the setting of Theorem~\ref{separationthm}.  Let us fix some notation: we write \(\mathscr I_D\) for the ideal sheaf on \(X\) associated to the divisor \(D\), and \(\mathscr I_V\) for the ideal sheaf of \(V\).  We write \(\mathscr I_{D,V}\) for the ideal in the local ring \(\mathscr O_{X,V}\) determined by \(\mathscr I_D\), and \(\mathfrak m_V\) for the maximal ideal in \(\mathscr O_{X,V}\).   For simplicity, we write \(\phi(\mathscr I)\) for the ideal sheaf \(\im((\phi^{-1})^\ast \mathscr I \to \cO_X)\), so that if \(W\) is a subscheme we have \(\phi(\mathscr I_W) = \mathscr I_{\phi(W)}\).

For each \(k \geq 0\), let \(D^{(k)}\) be the closed subscheme of \(X\) defined by the ideal sheaf \(\mathscr I_D+\mathscr{I}_{V}^{k+1}\).  This is the \(k\)\textsuperscript{th} order germ of \(V\) along \(D\).  Then consider the set
\[
A_k = \set{ n : \phi^n(D^{(k)}) \subseteq D \text{ in $\Spec(\mathscr O_{X,V})$}}.
\]
By the first part of Theorem~\ref{nonreducedmordelllangpds}, applied with \(x\) the generic point of \(V\), there exists an \(N\) so that  each set \(A_k\) is an \(N\)-periodic semilinear set.   The hypothesis that only finitely many associated points of the \(D^{(k)}\) contain \(x\) clearly holds, since the only possible such points are the generic point of \(D\) and \(x\) itself.

To begin, we give some lemmas showing that after replacing \(\phi\) by a suitable iterate the sets \(A_k\) take a particularly simple form. Note that when \(\phi\) is replaced by an iterate \(\phi^n\), the set \(A_k\) is replaced by \(\frac{1}{n}(A_k \cap n \Z)\). 

The next lemma is the key application of Theorem~\ref{nonreducedmordelllangpds}: in the two-dimensional setting, it implies that there is a uniform bound on the order of tangency between \(C\) and the curves \(\phi^n(C)\).  The crucial observation is that because of the uniform bound \(N\) on the periods of the arithmetic progressions in \(A_k\), if any \(\phi^k(D)\) is tangent to \(D\) to order \(j\), this tangency must occur for some \(0 < k \leq N\).

\begin{lemma}
\label{eventuallyfinite}
There exists some \(k\) for which \(A_k\) is a finite set.
\end{lemma}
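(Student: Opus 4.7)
The plan is proof by contradiction: assume every $A_k$ is infinite and deduce that some nonzero power of $\phi$ fixes $D$, contradicting the hypothesis.

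First I would apply Theorem~\ref{nonreducedmordelllangpds}(1) to the collection $\{D^{(k)}\}_{k\geq 0}$, taking $Y_1^\circ = D$, $Y_2^\circ = V$, $Z = D$, and $x$ the generic point of $V$. Each $D^{(k)}$ is set-theoretically supported on $V$, so its associated points lie in $V$; the only associated point $\xi$ of any $D^{(k)}$ whose closure contains $x$ is $x$ itself, and the hypothesis of part~(1) is satisfied. This yields a single constant $N$, independent of $k$, such that each $A_k$ is an $N$-periodic semilinear subset of $\Z$.

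The next step uses the fact that the $A_k$ are monotonically decreasing: $D^{(k)}\subseteq D^{(k+1)}$ forces $A_0\supseteq A_1\supseteq\cdots$. Writing each $A_k$ in the canonical form $F_k\sqcup\bigsqcup_{r\in T_k}(r+N\Z)$ with $T_k\subseteq\Z/N\Z$ and the finite piece $F_k$ disjoint from the union, monotonicity yields $T_{k+1}\subseteq T_k$. Since $\Z/N\Z$ is finite, $T_k$ stabilizes to some $T$. If $T=\varnothing$ then $A_k=F_k$ is finite for $k\gg 0$ and we are done. Otherwise $\bigsqcup_{r\in T}(r+N\Z)\subseteq \bigcap_k A_k$, so $\bigcap_k A_k$ contains infinitely many nonzero integers.

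Pick such an $n\neq 0$ in $\bigcap_k A_k$. Using $\phi^n(V)=V$, and hence $\phi^n(\mathfrak{m}_V^{k+1})=\mathfrak{m}_V^{k+1}$ in $\mathscr{O}_{X,V}$, the condition $n\in A_k$ unpacks to
\[
\mathscr{I}_D\subseteq \mathscr{I}_{\phi^n(D)}+\mathfrak{m}_V^{k+1}\qquad\text{in }\mathscr{O}_{X,V},\ \text{for every } k\geq 0.
\]
Krull's intersection theorem in the Noetherian local ring $\mathscr{O}_{X,V}/\mathscr{I}_{\phi^n(D)}$ gives $\bigcap_k\bigl(\mathscr{I}_{\phi^n(D)}+\mathfrak{m}_V^{k+1}\bigr) = \mathscr{I}_{\phi^n(D)}$, so $\mathscr{I}_D\subseteq\mathscr{I}_{\phi^n(D)}$, i.e.\ $\phi^n(D)\subseteq D$ in $\Spec(\mathscr{O}_{X,V})$. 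Since both $D$ and $\phi^n(D)$ are globally irreducible reduced divisors through $V$, this local containment of equidimensional reduced subschemes identifies their collections of local branches at $V$, and the Zariski closure of each such branch is the unique global component; hence $\phi^n(D)=D$ globally, contradicting the hypothesis that $D$ is not $\phi$-periodic.

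The main obstacle is the need for a period bound $N$ that is \emph{uniform in $k$}: without this, knowing that each $A_k$ is individually infinite would not rule out the possibility that the exceptional sets accumulate and that $\bigcap_k A_k = \{0\}$. The uniformity given by Theorem~\ref{nonreducedmordelllangpds}(1), combined with the finiteness of $\Z/N\Z$, forces the sequence $A_k$ to stabilize either at a finite set or at an infinite $\phi$-invariant pattern, and the latter case is exactly what Krull then translates into global periodicity of $D$.
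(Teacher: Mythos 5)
Your proof is correct and follows essentially the same route as the paper's: both arguments rest on the uniform period $N$ from Theorem~\ref{nonreducedmordelllangpds}(1), a pigeonhole modulo $N$, and Krull's intersection theorem to convert ``$n\in A_k$ for all $k$'' into $\phi^n(D)=D$, contradicting non-periodicity. The only difference is organizational — you run the pigeonhole via monotonicity and stabilization of the residue classes $T_k$, whereas the paper notes that each fixed $i\in\{1,\dots,N\}$ can lie in $A_k$ for only finitely many $k$ and takes the maximum over $i$ — and your explicit invocation of Krull makes precise a step the paper leaves implicit.
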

\begin{proof}
Let \(Y_1^\circ = D\) and \(Y_2^\circ = V\), and consider the collection of subschemes \(D^{(k)}\) defined by the ideal sheaves \(\mathscr I_D + \mathscr I_V^{k+1}\) It follows from Theorem~\ref{nonreducedmordelllangpds}(1) applied to these ideal sheaves, with \(x\) the generic point of \(V\),  that if \(A_k\) is an infinite set, then it contains an infinite arithmetic progression \(P\) with step size dividing \(N\) by Theorem~\ref{nonreducedmordelllangpds}.  In particular, there must exist some \(i\) with \(1 \leq i \leq N\) for which \(\phi^i(D^{(k)}) \subseteq D\) in \(\mathscr O_{X,V}\).

Since \(D\) is not periodic under \(\phi\), the divisors \(D\) and \(\phi^i(D)\) are distinct for any nonzero \(i\), and so
for any \(i\), there is a maximal \(j\) for which \(D^{(j)} \subset D \cap \phi^i(D)\) in \(\mathscr O_{X,V}\).  The claim of the lemma then holds with
\[
k = \max_{1 \leq i \leq N} \left( \max \set{ j : D^{(j)} \subset D \cap \phi^i(D) } \right). \qedhere
\]
\end{proof}

\begin{lemma}
\label{semilineariterate}
Suppose that \(A_0, \ldots, A_n\) is a finite collection of semilinear sets.  Then there exists an integer \(N\) so that \(A_i \cap N \Z\) is either empty, \(\set{0}\), or \(N\Z\) for each \(i\).
\end{lemma}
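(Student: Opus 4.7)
The plan is to choose $N$ to be a common multiple of all the periods appearing in a fixed semilinear decomposition of the $A_i$, taken large enough to ``crush'' the exceptional finite parts.

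Concretely, I would start by fixing, for each $i$, a decomposition
\[
A_i = F_i \cup \bigcup_{j} P_{i,j}, \qquad P_{i,j} = a_{i,j} + b_{i,j}\Z,
\]
guaranteed by the definition of a semilinear set. Since there are only finitely many indices $i$ and finitely many $j$ for each $i$, the collection of periods $\{b_{i,j}\}$ is finite and the union $F = \bigcup_i F_i$ is finite. Let $M = \mathrm{lcm}\,(b_{i,j})$ and let $L = 1 + \max\{\,|x| : x \in F,\ x \neq 0\,\}$ (with the convention $L = 1$ if $F \subseteq \{0\}$). I would then take $N$ to be any positive multiple of $M$ with $N \geq L$, e.g.\ $N = M\lceil L/M \rceil$.

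It remains to verify that this $N$ works. For the finite parts: every nonzero $x \in F_i$ has $0 < |x| < N$, so $x \notin N\Z$; hence $F_i \cap N\Z \subseteq \{0\}$. For a progression $P_{i,j}$, because $b_{i,j} \mid N$, every element of $N\Z$ is a multiple of $b_{i,j}$, and so $P_{i,j} \cap N\Z$ is nonempty iff $b_{i,j} \mid a_{i,j}$, i.e.\ iff $0 \in P_{i,j}$; in that case $P_{i,j} \supseteq b_{i,j}\Z \supseteq N\Z$, so $P_{i,j} \cap N\Z = N\Z$, and otherwise $P_{i,j} \cap N\Z = \emptyset$. Taking the union, $A_i \cap N\Z$ equals $N\Z$ if some $P_{i,j}$ contains $0$, and otherwise is contained in $\{0\}$; in the latter case it is either $\emptyset$ or $\{0\}$ depending on whether $0 \in F_i$.

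There is no real obstacle here; the only subtlety is making sure to commit to a fixed decomposition before choosing $N$ (since a semilinear set admits many decompositions, and the statement concerns the intersection with $N\Z$, not the decomposition itself). The whole argument is a short piece of elementary arithmetic once that is observed.
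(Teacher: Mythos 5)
Your proof is correct and follows essentially the same route as the paper's: take $N$ to be a common multiple of all periods in a fixed decomposition (so each progression meets $N\Z$ in either $N\Z$ or $\emptyset$), then enlarge $N$ past the nonzero elements of the finite exceptional sets. The only cosmetic difference is that you carry out the two steps in one choice of $N$, whereas the paper first intersects with $k\Z$ and then passes to a larger multiple.
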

\begin{proof}
Let \(k\) be an integer divisible by the period of every arithmetic progression appearing in any of the sets \(A_i\).  Each \(A_i\) can then be written as the union of a finite number of residue classes \(k\Z + r\) and a finite set.  Then \(A_i \cap k\Z\) is either all of \(k\Z\) or a finite set for each \(i\).  Replacing \(k\) by a sufficiently large multiple \(N\), we may eliminate any nonzero element of \(A_i \cap k\Z\).
\end{proof}

\begin{lemma}
\label{stepsize}
Suppose that 
\[
\Z = A_0 \supseteq \cdots \supseteq A_{k-1} \supseteq A_k \supseteq A_{k+1} \supseteq \cdots
\]
is a decreasing chain of semilinear sets, so that \(A_k\) is finite for sufficiently large \(k\) and each \(A_k\) contains \(0\).  Then there exist \(n > 0\) and \(k\) such that \(A_i \cap n\Z = n\Z\) for \(i \leq k\) and \(A_i \cap n\Z = \set{0}\) for \(i > k\).
\end{lemma}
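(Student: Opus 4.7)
The plan is to reduce the problem to Lemma~\ref{semilineariterate} applied to a finite sub-collection extracted from the chain. First, I would observe that a decreasing chain of subsets of \(\Z\) that eventually lies inside a fixed finite set must stabilize: since \(A_k\) is finite for \(k \gg 0\), there exists \(K\) with \(A_k = A_K =: A_\infty\) for all \(k \geq K\). Thus the whole chain takes only finitely many distinct values, namely \(A_0, A_1, \ldots, A_K\), and each of these is semilinear by hypothesis.

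Next, I would apply Lemma~\ref{semilineariterate} to the finite family \(A_0, \ldots, A_K\) to obtain an integer \(n > 0\) such that \(A_i \cap n\Z\) is one of \(\emptyset\), \(\set{0}\), or \(n\Z\) for each \(0 \leq i \leq K\) (and hence for every \(i\), since \(A_i = A_K\) for \(i \geq K\)). Because every \(A_i\) contains \(0\) by hypothesis, the empty case is ruled out, so \(A_i \cap n\Z \in \set{\set{0}, n\Z}\) for all \(i\).

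Finally, I would locate the threshold. The equality \(A_0 = \Z\) gives \(A_0 \cap n\Z = n\Z\), so the chain starts at \(n\Z\). On the other hand, for \(i \geq K\) the set \(A_i = A_\infty\) is finite, so \(A_i \cap n\Z\) is finite and therefore must equal \(\set{0}\) rather than \(n\Z\). Since the intersections \(A_i \cap n\Z\) form a decreasing chain in the two-element poset \(\set{0} \subseteq n\Z\), once the value drops from \(n\Z\) to \(\set{0}\) it cannot climb back, and there is a unique largest index \(k\) with \(A_k \cap n\Z = n\Z\). This \(n\) and this \(k\) satisfy the stated conclusion. I do not anticipate any real obstacle here—the argument is essentially a bookkeeping exercise that leverages both the eventual finiteness (to get stabilization and rule out \(n\Z\) at large indices) and the presence of \(0\) in each \(A_i\) (to rule out the empty intersection), while the main analytic input has already been packaged into Lemma~\ref{semilineariterate}.
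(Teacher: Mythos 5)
Your proof is correct and follows essentially the same route as the paper's: both apply Lemma~\ref{semilineariterate} to a finite initial segment of the chain ending at a finite \(A_m\), rule out the empty intersection using \(0 \in A_i\), and conclude by monotonicity of the chain. Your added stabilization observation is harmless but not needed, since for \(i > K\) the inclusion \(A_i \cap n\Z \subseteq A_K \cap n\Z = \set{0}\) together with \(0 \in A_i\) already forces \(A_i \cap n\Z = \set{0}\).
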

\begin{proof}
According to Lemma~\ref{eventuallyfinite}, there exists some \(m\) so that \(A_m\) is a finite set.  By Lemma~\ref{semilineariterate} applied to the sets \(A_0,\ldots,A_m\), there exists \(N\) so that for \(1 \leq i \leq m\), we have either \(A_i \cap N\Z = \set{0}\) or \(A_i \cap N\Z = N\Z\), with \(A_0 \cap N\Z = N\Z\) and \(A_m \cap N\Z = \set{0}\).    Since the sets are decreasing, the claim follows.
\end{proof}

It follows that there exists \(n\) so that after replacing \(\phi\) by the iterate \(\phi^n\), there exists \(k\) with \(A_i = \Z\) for \(i \leq k\), and \(A_i = \set{0}\) for \(i > k\).

\begin{remark}
The geometric significance of Lemma~\ref{stepsize} to the discussion in Section~\ref{examplesection} is this: it might happen, for example, that \(\phi^n(C)\) is tangent to \(C\) at \(x\) whenever \(n\) is is \(0\) or \(1\) modulo \(3\), while \(C\) is tangent to \(\phi^n(C)\) to order \(2\) only for a finite set \(n = 3,6,7\).  In this setting we have \(N = 3\) and take \(k_0 = 1\).  After replacing \(\phi\) by \(\phi^9\), 
we arrange that
\begin{enumerate}
\item \(C\) is tangent to \(\phi^n(C)\) for all \(n\), 
\item \(C\) is not tangent to \(\phi^n(C)\) to order \(2\) for any \(n\).
\end{enumerate}
\end{remark}

 The first step in the proof is to show that \(\mathscr{I}_{D,V} + \mathscr{I}_{\phi^n(D),V} \) stabilizes to an ideal independent of \(n\); the means roughly that the multiplicity of intersection of \(D\) and \(\phi^n(D)\) along \(V\) is independent of \(n\).

We do this first in the case that \(D\) is smooth at the generic point of \(V\).  This follows from calculations in the local ring at \(V\).

\begin{lemma}
\label{localpartregular}
Let \(\phi : X \to X\) be an automorphism of smooth variety over a field of characteristic \(0\), and \(V\) a codimension \(2\) subvariety with \(\phi(V) = V\).  Suppose that \(D\) is an irreducible divisor containing \(V\) and smooth at the generic point of \(V\), and which is not $\phi$-periodic.  Then after replacing \(\phi\) by an iterate, there exists a value of \(k\) so that:
\begin{enumerate}
\item \(\mathscr{I}_{D,V} + \mathscr{I}_{\phi^n(D),V} = \mathscr{I}_{D,V} + \mathfrak m_V^{k+1}\) in \(\cO_{X,V}\) for every nonzero value of \(n\);
\item there exists an ideal sheaf \(\mathscr I_W\) on \(X\) with \(\phi^n(\mathscr I_W) = \mathscr I_W\) for every \(n\), and \(\mathscr I_W\vert_{\mathscr O_{X,V}} = \mathscr{I}_{D,V} + \mathfrak m_V^{k+1}\).
\end{enumerate}
\end{lemma}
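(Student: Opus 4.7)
The plan is to reduce everything to a two-dimensional computation in the regular local ring $\mathscr{O}_{X,V}$, where the semilinear structure of the sets $A_i$ established just above the lemma will translate into precise information about the intersection of $D$ with each $\phi^n(D)$ along $V$.

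\emph{Setup.} Using the discussion following Lemma~\ref{stepsize} applied to the decreasing chain $A_0 \supseteq A_1 \supseteq \cdots$, I first replace $\phi$ by a suitable iterate so that there is an integer $k \geq 0$ with $A_i = \mathbb{Z}$ for $i \leq k$ and $A_i = \{0\}$ for $i > k$. Unwinding the definition of $A_i$ and using $\phi^n(V) = V$ to identify $\phi^n(\mathscr{I}_V^{i+1})$ with $\mathscr{I}_V^{i+1}$, these translate, for every nonzero $n \in \mathbb{Z}$, into
\[
\mathscr{I}_{D,V} \subseteq \mathscr{I}_{\phi^n(D),V} + \mathfrak{m}_V^{k+1}, \qquad \mathscr{I}_{D,V} \not\subseteq \mathscr{I}_{\phi^n(D),V} + \mathfrak{m}_V^{k+2}.
\]

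\emph{Proof of (1).} Since $D$ is smooth at the generic point of $V$, the ring $\mathscr{O}_{X,V}$ is a two-dimensional regular local ring and $\mathscr{I}_{D,V} = (f)$ is principal; complete $f$ to a regular system of parameters $(f,g)$. Since $\phi^n$ is an automorphism with $\phi^n(V)=V$, the divisor $\phi^n(D)$ is also smooth at $V$, so $\mathscr{I}_{\phi^n(D),V} = (h_n)$ is principal as well. The intersection multiplicity $\len \mathscr{O}_{X,V}/(f,h_n)$ can be computed in two ways: it equals both the valuation of $\bar h_n$ in the DVR $\mathscr{O}_{D,V} = \mathscr{O}_{X,V}/(f)$ and the valuation of $\bar f$ in the DVR $\mathscr{O}_{\phi^n(D),V} = \mathscr{O}_{X,V}/(h_n)$. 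The two displayed conditions pin the latter valuation to be exactly $k+1$, so by this symmetry the former is also $k+1$, whence $(\bar h_n) = (\bar g^{k+1})$ in $\mathscr{O}_{D,V}$. Lifting back to $\mathscr{O}_{X,V}$,
\[
\mathscr{I}_{D,V} + \mathscr{I}_{\phi^n(D),V} = (f) + (h_n) = (f) + (g^{k+1}) = (f) + \mathfrak{m}_V^{k+1},
\]
proving (1).

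\emph{Proof of (2) and main obstacle.} I would take $\mathscr{I}_W := \sum_{n \in \mathbb{Z}} \mathscr{I}_{\phi^n(D)}$ as a subsheaf of $\mathscr{O}_X$. This is visibly $\phi$-invariant, and coherent because $X$ is Noetherian (on any affine open the ascending sum of coherent ideals stabilizes at a finite subsum). Its localization at $V$ contains $(f) + (h_n) = \mathscr{I}_{D,V} + \mathfrak{m}_V^{k+1}$ for any single nonzero $n$ by part (1), and is contained in $\mathscr{I}_{D,V} + \mathfrak{m}_V^{k+1}$ since every summand is, again by (1). The main obstacle is the symmetric intersection-multiplicity calculation in (1): without the smoothness of both $D$ and $\phi^n(D)$ at $V$ providing two DVR structures, the non-containment modulo $\mathfrak{m}_V^{k+2}$ would only yield a lower bound on the valuation of $\bar h_n$, not the matching upper bound needed for the equality. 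Part (2) is then a routine sum-over-orbit construction once (1) is in hand.
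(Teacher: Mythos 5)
Your proof is correct and follows the paper's overall strategy: both arguments reduce, via Lemma~\ref{stepsize} and the sets \(A_i\), to algebra in the two-dimensional regular local ring \(\mathscr O_{X,V}\), where the smoothness of \(D\) at the generic point of \(V\) makes \(\mathscr O_{X,V}/\mathscr I_{D,V}\) a discrete valuation ring. Two of your sub-steps differ from the paper's, and both variants are valid. For (1), the paper obtains the containment \(\mathscr I_{\phi^n(D),V} \subseteq \mathscr I_{D,V} + \mathfrak m_V^{k+1}\) by applying \(\phi^n\) to the containment for \(-n\), and then identifies \(\mathscr I_{D,V} + \mathscr I_{\phi^n(D),V}\) using the fact that every ideal containing the principal ideal \(\mathscr I_{D,V}\) has the form \(\mathscr I_{D,V} + \mathfrak m_V^r\); you instead invoke the symmetry of the local intersection number \(\len \mathscr O_{X,V}/(f,h_n)\), which additionally requires --- and you correctly justify --- that each \(\phi^n(D)\) is also smooth at the generic point of \(V\). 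For (2), your \(\mathscr I_W = \sum_{n} \mathscr I_{\phi^n(D)}\) is manifestly \(\phi\)-invariant and coherent, with stalk at \(V\) computed directly from (1); the paper instead takes \(W\) to be the scheme-theoretic image of \(\Spec \mathscr O_{X,V}/(\mathscr I_{D,V} + \mathfrak m_V^{k+1}) \to X\) and must separately verify invariance from the equality \(\mathscr I_{D,V} + \mathfrak m_V^{k+1} = \mathscr I_{\phi^n(D),V} + \mathfrak m_V^{k+1}\). Your construction of \(W\) is arguably cleaner; the paper's treatment of (1) is marginally more economical in that it never needs the smoothness of the translates \(\phi^n(D)\) or any intersection-theoretic input.
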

\begin{proof}

By Lemma~\ref{stepsize}, we may replace \(\phi\) by an iterate \(\phi^d\) so that there exists a value of \(k\) for which \(A_k = \Z\) while \(A_{k+1} = \set{0}\).  This means that \(\phi^n(D^{(k)}) \subseteq D\) in \(\mathscr O_{X,V}\) for all \(n\), while \(\phi^n(D^{(k+1)}) \subseteq D\) in \(\mathscr O_{X,V}\) only for \(n = 0\).  At the level of ideals in \(\mathscr O_{X,V}\), this yields \(\mathscr{I}_{D,V} \subseteq \mathscr{I}_{\phi^n(D),V} +  \mathfrak m_V^{k+1}\), while \(\mathscr{I}_{D,V} \not\subseteq \mathscr{I}_{\phi^n(D),V} + \mathfrak m_V^{k+2}\). Replacing \(n\) by \(-n\), we have \( \mathscr{I}_{D,V} \subseteq \mathscr{I}_{\phi^{-n}(D),V} + \mathfrak m_V^{k+1}\).  Applying \(\phi^n\) to both sides yields \(\mathscr{I}_{\phi^n(D),V} \subseteq \mathscr{I}_{D,V} + \mathfrak m_V^{k+1}\). Similarly, \(\mathscr{I}_{\phi^n(D),V} \not\subseteq \mathscr{I}_{D,V} + \mathfrak m_V^{k+2}\). It follows that 
\[
\mathscr{I}_{D,V} + \mathfrak m_V^{k+1} = \mathscr{I}_{\phi^n(D),V} + \mathfrak m_V^{k+1}.
\]
Let \(W \subset X\) be the scheme-theoretic image of 
\[
\Spec \mathscr O_{X,V}/(\mathscr{I}_{D,V} + \mathfrak m_V^{k+1}) \to \Spec \mathscr O_{X,V} \to X,
\]
defined by an ideal sheaf \(\mathscr I_W\).  It must be that \(\phi^n(W)\) is the scheme-theoretic closure of 
\begin{align*}
\phi^n(\Spec \mathscr O_{X,V}/(\mathscr{I}_{D,V} + \mathfrak m_V^{k+1})) &= \Spec \mathscr O_{X,V}/(\mathscr{I}_{\phi^n(D),V}+ \phi^n(\mathfrak m_V^{k+1})) \\ &= \Spec \mathscr O_{X,V}/(\mathscr{I}_{\phi^n(D),V} + \mathfrak m_{V}^{k+1}) =  \Spec \mathscr O_{X,V}/(\mathscr{I}_{D}+ \mathfrak m_{V}^{k+1})
\end{align*}
This shows that \(W\) is itself \(\phi\)-invariant.  Note that \(\mathscr{I}_{W,V} = \mathscr{I}_{D,V} + \mathfrak m_V^{k+1}\). 

Since \(X\) is smooth, \(\cO_{X,V}\) is a two-dimensional regular local ring.   The ideal \(\mathscr{I}_{D,V} \subset \cO_{X,V}\) is principal, and since \(D\) is smooth at the generic point of \(V\), \(\mathscr{I}_{D,V}\) is generated by an element not contained in \(\mathfrak m_V^2\).  Then \(\cO_{X,V}/\mathscr{I}_{D,V}\) is a one-dimensional regular local ring, hence a discrete valuation ring.  The ideals in \(\cO_{X,V}\) containing \(\mathscr{I}_{D,V}\) correspond to ideals in \(\cO_{X,V}/\mathscr{I}_{D,V}\), which are of the form \(\mathfrak m_V^r\).  It follows that every ideal in \(\cO_{X,V}\) containing \(\mathscr{I}_{D,V}\) is of the form \(\mathscr{I}_{D,V} + \mathfrak m_V^r\) for some \(r\).

We claim next that
\[
\mathscr{I}_{D,V} + \mathscr{I}_{\phi^n(D),V} = \mathscr{I}_{D,V} + \mathfrak m_V^{k+1}
\]
for every nonzero value of \(n\).  On one hand, we have
\[
\mathscr{I}_{D,V} + \mathscr{I}_{\phi^n(D),V} \subseteq \mathscr{I}_{D,V} + \mathscr{I}_{\phi^n(D),V} + \mathfrak m_V^{k+1} = \mathscr{I}_{D,V} + \mathfrak m_V^{k+1}
\]
To see that \(\mathscr{I}_{D,V} + \mathscr{I}_{\phi^n(D),V} = \mathscr{I}_{D,V} + \mathfrak m_V^{k+1}\), it suffices to note that \(\mathscr{I}_{D,V} + \mathscr{I}_{\phi^n(D),V} \not\subseteq \mathscr{I}_{D,V} + \mathfrak m_V^{k+2}\) by the choice of \(k\).
\end{proof}
We now prove a special case of Theorem \ref{separationthm}; we will later reduce the theorem to this case.

\begin{theorem}[Local separation, smooth case]
\label{separationthmsmooth}
Let \(X\) be a smooth variety over \(k\) and \(\phi : X \to X\) an automorphism.  Suppose that \(D \subset X\) is an irreducible divisor, containing a codimension-\(2\) irreducible closed subset \(V\) with \(\phi(V) = V\), that $D$ is not $\phi$-periodic, and that \(D\) is smooth at the generic point of \(V\).  Then there exists a birational morphism \(\pi  : Y \to X\) such that after replacing \(\phi\) by a suitable iterate:
\begin{enumerate}
\item \(Y\) is smooth;
\item some iterate of \(\phi\) lifts to an automorphism \(\psi : Y \to Y\);
\item \(\pi(\psi^m(\tilde{D}) \cap \psi^n(\tilde{D}))\) does not contain \(V\) for any \(m \neq n\). 
\end{enumerate}
\end{theorem}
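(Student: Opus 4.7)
The strategy is to blow up a single $\phi$-invariant ideal sheaf on $X$ and then equivariantly resolve singularities. After replacing $\phi$ by a suitable iterate, Lemma~\ref{localpartregular} produces an integer $k\ge 0$ and a $\phi$-invariant ideal sheaf $\mathscr{I}_W$ on $X$ whose restriction to $\mathscr{O}_{X,V}$ equals $\mathscr{I}_{D,V}+\mathfrak{m}_V^{k+1}$. Let $\pi_0\colon Y_0\to X$ be the blow-up of $\mathscr{I}_W$; the $\phi$-invariance of $\mathscr{I}_W$ ensures that $\phi$ lifts to an automorphism of $Y_0$. Applying Theorem~\ref{equivariantres} to resolve $Y_0$ equivariantly produces a smooth $Y$ with a birational morphism $\pi\colon Y\to X$ factoring through $Y_0$, together with an automorphism $\psi\colon Y\to Y$ lifting $\phi$.

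It remains to show $\pi(\psi^m(\tilde D)\cap\psi^n(\tilde D))\not\supset V$ for $m\ne n$, which by applying $\psi^{-m}$ reduces to the case $m=0$, and can be checked at the generic point of $V$. In $\mathscr{O}_{X,V}$, a two-dimensional regular local ring, choose regular parameters $(f,g)$ with $\mathscr{I}_{D,V}=(f)$. Lemma~\ref{localpartregular} then forces the ideal of $\phi^n(D)$ in $\mathscr{O}_{X,V}$ to be generated by an element of the form $f_n=a_n f+b_n g^{k+1}$ with $b_n$ a unit; moreover, since $\phi^n(D)$ is smooth at $V$ we have $f_n\notin\mathfrak{m}_V^2$, which forces $a_n$ to be a unit when $k\ge 1$. (The case $k=0$ is easier: the blow-up of $V$ is already smooth and separates $D$ from $\phi^n(D)$ transversally.)

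On the two standard affine charts of $\Bl_{(f,g^{k+1})}\Spec(\mathscr{O}_{X,V})$: on the chart with coordinate $v=f/g^{k+1}$ the total space is smooth, with exceptional divisor $\{g=0\}$, and the strict transform of $D$ is $\{v=0\}$, while the strict transform of $\phi^n(D)$ is cut out by $a_n v+b_n+\cdots$, meeting the exceptional fibre at $v=-b_n/a_n\ne 0$. On the chart with coordinate $u=g^{k+1}/f$, which carries an $A_k$-singularity at the origin for $k\ge 1$, the element $f$ generates the pullback of $\mathscr{I}_W$, so the strict transform of $D$ is empty, while the strict transform of $\phi^n(D)$ is cut out by $a_n+b_n u+\cdots$ and avoids the singular point $u=0$ because $a_n$ is a unit. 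Consequently, above the generic point of $V$, the divisors $\tilde D$ and the strict transform of $\phi^n(D)$ are disjoint on $Y_0$ and both lie in the smooth locus of $Y_0\to X$ over $V$.

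The equivariant resolution $Y\to Y_0$ is an isomorphism over this smooth locus, so the disjointness persists in $Y$: $\psi^n(\tilde D)\cap\tilde D$ is empty above the generic point of $V$, hence its image under $\pi$ cannot contain $V$, yielding (3). The main technical obstacle is verifying that the coefficient $a_n$ is a unit; this is what ensures the strict transforms miss the singular locus of the non-principal-ideal blow-up and makes a single weighted blow-up followed by equivariant resolution suffice, in place of a more delicate iterated sequence of smooth point blow-ups.
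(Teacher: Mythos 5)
Your proposal is correct and follows essentially the same route as the paper: blow up the $\phi$-invariant subscheme $W$ produced by Lemma~\ref{localpartregular}, lift an iterate of $\phi$, check disjointness of the strict transforms at the generic point of $V$, and finish with equivariant resolution. The only divergence is in the disjointness check, where your explicit two-chart computation replaces the paper's appeal to the general fact that blowing up $\mathscr{I}_D+\mathscr{I}_{\phi^n(D)}$ separates the strict transforms of $D$ and $\phi^n(D)$ (\cite[Exercise 7.12]{hartshorne}); in particular the unit-ness of $a_n$ and the smooth-locus discussion are superfluous, since strict transforms on $Y$ map into strict transforms on $Y_0$ and disjointness over $\eta_V$ therefore persists automatically.
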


\begin{proof}[Proof of Theorem~\ref{separationthmsmooth}]
Let \(\pi_0 : Y_0 \to X\) be the blow-up of \(X\) along the closed subscheme \(W\) obtained from Lemma \ref{localpartregular}.   We claim that
\begin{enumerate}
\item an iterate of \(\phi\) lifts to an automorphism \(\psi_0 : Y_0 \to Y_0\);
\item \(\pi_0(\phi^m(\tilde{D}) \cap \phi^n(\tilde{D}))\) does not contain \(V\) for \(m \neq n\).
\end{enumerate}
The first claim is immediate since the center \(W\) is invariant under some iterate $\phi^r$ of \(\phi\), by construction.  Write \(\psi_0 : Y_0 \to Y_0\) for this lift of $\phi^r$.

By (2) of Lemma~\ref{localpartregular}, we have \(\mathscr I_{D,V} + \mathscr I_{\phi^n(D),V} = \mathscr I_{D,V} + \mathfrak m_V^{k+1}\) in \(\mathscr O_{X,V}\) for every nonzero value of \(n\).  Since this equality holds at the generic point of \(V\), for every \(n\) there exists an open set \(U_n \subset X\) with \(\left( \mathscr I_{D} + \mathscr I_{\phi^n(D)}\right)\vert_{U_n} =  \mathscr I_W \vert_{U_n}\).  It then follows from~\cite[Exercise 7.12]{hartshorne} that the strict transforms of \(D\) and \(\phi^n(D)\) do not meet above \(U_n\), so that \(\pi_0(D \cap \psi_0^n(D))\) is disjoint from \(U_n\).  This shows the necessary disjointness holds for \(m=0\).  In general, we obtain.
\[
\pi_0(\psi_0^m(\tilde{D}) \cap \psi_0^n(\tilde{D})) = \pi_0(\psi_{0}^{m}(\tilde{D}) \cap \psi_0^{n-m}(\tilde{D})) = \phi^{n-m}(\pi_0(\tilde{D} \cap \psi_0^{n-m}(\tilde{D}))).
\]
This set is consequently disjoint from \(\phi^{n-m}(U_n)\), an open subset of \(V\), as required.  Hence (2) and (3) of Theorem~\ref{separationthmsmooth} are satisfied by the model \(\pi_0 : Y_0 \to X\).  However, the variety \(Y_0\) may not itself be smooth.  Applying Theorem~\ref{equivariantres} to \(\psi_0 : Y \to Y\), we obtain a smooth model \(\pi : Y \to X\) on which all three conditions are satisfied, completing the proof.
\end{proof}

\begin{remark}
Some additional care is needed to prove the local separation result (Theorem \ref{separationthm}) in the case that \(D\) is not smooth at the generic point of \(V\), since the proof of Theorem~\ref{separationthmsmooth} required that \(\mathscr I_{D,V}\) is not contained in \(\mathfrak m_V^2 \subset \mathscr O_{X,V}\).  We now show that it is possible to reduce to the case in which \(\tilde{D}\) is smooth at the generic point of \(V\).

To see the difficulty, suppose that \(X\) is a surface and \(V\) is a point, as \S\ref{examplesection}.  It might be that \(D \subset X\) is a curve with a node at \(D\).  It is possible that one of the local branches of \(D\) has tangent direction fixed by \(\phi\), while the other does not; this is illustrated in Figure~\ref{nodalcurve}.

The automorphism lifts to the blow-up of \(X\) at the node of \(D\).  One point of intersection of \(\tilde{D}\) with the exceptional divisor has infinite orbit.  The other intersection \(x\) is a fixed point of the automorphism.  However, the curve is smooth at \(x\), and Theorem~\ref{separationthmsmooth} can be applied to guarantee the existence of a model on which the iterates are separated.
\end{remark}

\begin{figure}[htb]
  \centering
\includegraphics{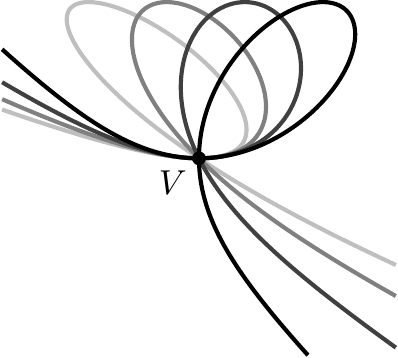}
\caption{Orbit of a curve with a node at the fixed point}
\label{nodalcurve}
\end{figure}

\begin{lemma}
\label{regularizedivisor}
Suppose that \(\phi : X \to X\) is an automorphism of a smooth variety over \(k\) and that \(V \subset X\) is an irreducible codimension \(2\) subvariety with \(\phi(V) = V\).  Suppose that \(D \subset X\) is an irreducible divisor containing \(V\), which is not $\phi$-periodic.  Then there exists a birational map \(\pi : Y \to X\) such that:
\begin{enumerate}
\item \(Y\) is smooth;
\item some iterate of \(\phi\) lifts to an automorphism \(\psi : Y \to Y\);
\item for every value of \(n\), the codimension \(2\) part of \(D \cap \phi^n(D) \cap \pi^{-1}(V) = \bigcup_i V_i\) is a union of finitely many \(\psi\)-invariant codimension \(2\) subvarieties \(V_i\) of \(Y\), and \(D\) is smooth at the generic point of \(V_i\) for each \(i\).
\end{enumerate}
\end{lemma}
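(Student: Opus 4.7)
The plan is to construct $\pi : Y \to X$ as a finite sequence of $\phi$-equivariant blow-ups above $V$ that desingularize $\tilde D$, and then verify condition (3) by finiteness considerations.

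For the desingularization, I would first apply equivariant embedded resolution (a standard strengthening of Theorem~\ref{equivariantres}) to reduce to the case that $V$ is smooth in $X$. Then I blow up $V$ itself: since $V$ is $\phi$-invariant, an iterate of $\phi$ lifts to an automorphism of $X_1 = \Bl_V X$. The strict transform $\tilde D_1$ has singularities above $V$ contained in finitely many proper closed subvarieties of the exceptional locus, which are permuted by the lifted automorphism; after replacing $\phi$ by a further iterate each such center is invariant, so I blow them up and continue. Working locally at the generic point of $V$ this is the desingularization of a plane curve in the two-dimensional regular local ring $\mathscr O_{X,V}$, which terminates in finitely many steps by Hironaka. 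The composite $\pi : Y \to X$ is then $\psi$-equivariant for an appropriate iterate $\psi = \phi^N$, with $Y$ smooth and $\tilde D$ smooth.

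Next, I would ensure $\psi$-invariance of the exceptional data. The irreducible components $E_1, \ldots, E_r$ of $\pi^{-1}(V)$ are permuted by $\psi$, so after a further iteration each $E_j$ is $\psi$-invariant. The finitely many components of each $E_i \cap E_j$ and of each $\tilde D \cap E_j$ are likewise permuted by $\psi$ and can be made individually $\psi$-invariant by a further iteration. Now for any $n$, any codimension $2$ component $V_i$ of $\tilde D \cap \psi^n(\tilde D) \cap \pi^{-1}(V)$ lies in some $E_j$; since $V_i$ has codimension $1$ in both $\tilde D$ and $E_j$, and $\tilde D \cap E_j$ has the expected codimension $2$ in $Y$, $V_i$ must be an irreducible component of $\tilde D \cap E_j$, hence is $\psi$-invariant by the preceding step. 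Smoothness of $\tilde D$ at the generic point of $V_i$ is immediate, since $\tilde D$ is smooth on $Y$.

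The main obstacle is arranging the desingularization sequence to be $\phi$-equivariant, given that $\phi$ does not preserve $D$: the singular locus of $\tilde D$ at each stage is not a priori $\phi$-invariant. The key observation is that only finitely many blow-up centers appear at each stage over the $\phi$-invariant set $V$, and these are permuted by $\phi$; passing to an iterate at each stage (and compounding at the end into a single iterate $\psi = \phi^N$) restores equivariance.
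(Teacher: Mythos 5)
Your proposal has a genuine gap, and it is precisely the difficulty that the remark preceding the lemma (the nodal-curve example) is warning about. You propose to fully desingularize $\tilde D$ above $V$ by an equivariant sequence of blow-ups, justifying equivariance by the claim that the blow-up centers ``are permuted by $\phi$'' and so become invariant after passing to an iterate. But the centers you need are components of the singular locus of the strict transform of $D$, and $D$ is \emph{not} $\phi$-periodic: the lifted automorphism carries $\Sing(\tilde D_1)$ to $\Sing(\psi_1(\tilde D_1))$, not to $\Sing(\tilde D_1)$, so there is no reason for these components to be permuted, and a component with infinite orbit cannot be made invariant by passing to any iterate. Concretely, if $D$ has a node along $V$ with one branch whose tangent direction has infinite orbit under $\phi$, the corresponding point of $\tilde D_1\cap E$ cannot be blown up equivariantly, and $\tilde D$ cannot be made smooth everywhere above $V$. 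This is exactly why the lemma asserts smoothness of $D$ only at the generic points of the \emph{invariant} components of $D\cap\phi^n(D)\cap\pi^{-1}(V)$, and why the paper's proof blows up only $\psi$-periodic codimension-$2$ subvarieties of maximal multiplicity, running an induction on the pair $(n(X_k),m(X_k))$ and using the surface-case termination of embedded resolution only to bound that induction.

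The same misconception reappears in your verification of (3): you assert that the components of $\tilde D\cap E_j$ are permuted by $\psi$, but $\psi(\tilde D\cap E_j)=\psi(\tilde D)\cap E_j$ is a different set, so they are not. The correct replacement for this step --- and the place where the dynamical Mordell--Lang machinery actually enters the proof --- is to apply the semilinearity of the sets $A_{\psi}(W_i,W_j)=\set{z : \psi^z(W_i)\subset W_j}$ for the finitely many codimension-$2$ components $W_i$ of the intersection of the strict transform of $D$ with the new exceptional locus, together with Lemma~\ref{semilineariterate}: after a further iterate, each $W_i$ is either $\psi$-invariant or never returns to any $W_j$, whence any codimension-$2$ component of $\tilde D\cap\psi^n(\tilde D)$ lying above $V$ is forced to be one of the invariant $W_i$. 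Without this input your argument does not establish that the $V_i$ in condition (3) are $\psi$-invariant.
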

\begin{proof}
We will construct \(Y\) via a sequence of blow-ups \(\pi_k^\prime : X_k \to X_{k-1}\) such that \(\phi\) induces automorphisms \(\psi_k : X_k \to X_k\).  Write \(\pi_k : X_k \to X\) for the composition of these blow-ups, and write \(D_k\) for the strict transform of \(D\) on \(X_k\).  

The argument is  by induction on the maximal multiplicity of \(D\)  along a fixed component of \(\psi_k\) lying above \(V\).  More precisely, let \(m(X_k)\) be the number
\[
m(X_k) = \max_{\substack{V_i \subset \pi_k^{-1}(V) \\ \codim_{X_k} V_i = 2 \\ \psi_k^n(V_i) = V_i}} \mult_{V_i}(D_k),
\]
the maximum multiplicity of \(D\) along a \(\psi_k\)-periodic codimension \(2\) subset lying above \(V\).   Here ``\(\psi_k^n(V_i) = V_i\)'' in the subscript means that this equality holds for some non-zero \(n\), so that \(V_i\) is \(\psi_k\)-periodic. Observe that the set of codimension \(2\) subvarieties \(V_i \subset \pi_k^{-1}(V)\) with \(\mult_{V_i}(D_k) \geq 0\) is finite, so this maximum is taken over a finite set.  Note too that replacing \(\phi\) by an iterate does not change \(m(X_k)\), and so we do this freely in what follows.  
Let
\[
n(X_k) = \# \set{V_i \subset \pi^{-1}(V) : \text{$\codim_{X_k} V_i = 2$, $\psi_k^n(V_i) = V_i$, and $\mult_{V_i}(D_k) = m(X_k)$}}
\]
be the number of components achieving this multiplicity.

The proof is by induction on \((n(X_k),m(X_k))\), ordered lexicographically: we show that after a sequence of blow-ups, either the maximal multiplicity decreases, or the number of \(V_i\) achieving this multiplicity decreases.

Suppose that \(m(X_k) \geq 2\), and fix a component \(V_k^0 \subset \pi_k^{-1}(V)\) which is \(\psi_k\)-periodic and has \(\mult_{V_k^0}(D_k) = m(X_k)\).  Let \(\pi_{k+1}^{\prime 0} : X_{k+1}^0 \to X_k\) be the blow-up along \(V_k\).   Since \(V_k\) is \(\psi_k\)-invariant, \(\psi_k\) lifts to an automorphism \(\psi_{k+1}^0 : X_{k+1}^0 \to X_{k+1}^0\).  The variety \(X_{k+1}^0\) might
be singular, but taking a functorial resolution \(\pi_k^\prime : X_{k+1} \to X_k\) we may assume that \(X_{k+1}\) is smooth and that \(\psi_k\) lifts to \(\psi_{k+1} : X_{k+1} \to X_{k+1}\).

Let \(W_1,\ldots,W_r\) be the codimension-\(2\) components of \(D_{k+1} \cap (\pi_{k+1}^\prime)^{-1}(V_k^0)\). Each of the sets \[A_{\psi_{k+1}}(W_i,W_j):=\{z\in \mathbb{Z}\mid \psi_{k+1}^z(W_i)\subset W_j\}\] is semilinear.  By Lemma~\ref{semilineariterate}, after replacing \(\psi_{k+1}\) by an iterate, we may assume that \(A_{\psi_k}(W_i,W_i)\) is either all of \(\Z\) or empty, and that \(A_\phi(W_i,W_j)\) is empty for \(i \neq j\).  

Suppose then that \(D_{k+1} \cap \psi_{k+1}^n(D) \cap \pi_{k+1}^{-1}(V_k^0)\) has a component with codimension \(2\) in \(X\).  It must be that \(\phi^n(W_i) = W_j\) for some \(i\) and \(j\).  But by the above, this is possible only if \(i=j\) and \(W_i\) is \(\phi\)-invariant.

We have \(\mult_{W_i}(D_{k+1}) \leq \mult_{V_k^0}(D_k)\) for any \(i\), so \(m(X_{k+1}) \leq m(X_k)\).  However, it might happen that equality holds for some component \(W_i\), so that \(m(X_{k+1}) = m(X_k)\) and \(n(X_{k+1}) = n(X_k)\).  In this case, we repeat the preceding blow-up procedure, taking \(V_{k+1}^0= W_i\) for the center of the blow-up.  If \(m(X_k)\) and \(n(X_k)\) do not decrease, we continue the process, obtaining a sequence of models
\(\cdots \to X_{k+1} \to X_k\) such that
\begin{enumerate}
\item some iterate of \(\phi\) lifts to an automorphism of \(X_{k+\ell}\) for all \(\ell \geq 0\)
\item \(m(X_{k+1})  = m(X_k)\) and \(n(X_{k+1}) = n(X_k)\).
\end{enumerate}
We claim that such a sequence of blow-ups must be finite: this follows from the standard local calculation showing that embedded resolution of singularities of curves on a surface can be achieved by repeatedly blowing up the singular points.  
Indeed, we may reduce the statement to the two-dimensional case as follows. Fix a surface \(S \subset X\) so that \(S\) is transverse to \(V\).  Write \(S_k \subset X_k\) for the strict transform of \(S\) on \(X_k\).  
Then for every \(\ell \geq k\) we have
\[
\mult_{V_\ell^0 \cap S_\ell}(D_\ell \vert_{S_\ell}) = \mult_{V_\ell^0} D_\ell \geq 2.
\]
Since \(S_{k+1}\) is obtained by blowing up \(S_k\) at the points \(V_k^0 \cap S_k\), this sequence of blow-ups must be finite by~\cite[Lemma 1.68]{kollarresolution}.  This shows that the multiplicity eventually decreases, and we have either \(n(X_\ell) < n(X_k)\) for some \(\ell > k\) (if some other component \(V_k^\prime\) has the same multiplicity as \(V_k\), i.e.\ if \(n(X_k) \geq 2\)) or \(m(X_\ell) < m(X_\ell)\) (if \(n(X_k) = 1\)).  As a result, we eventually reach a model \(Y= X_\ell\) on which \(m(X_\ell) = 1\).

Each model in our sequence has the property that if \(D_k \cap \psi_k^n(D)\) has a component contained in \(\pi_k^{-1}(V)\), this component is \(\phi\)-invariant.  That \(m(X_\ell) = 1\) means that \(D\) is smooth at the generic point of each such component, and so condition (3) holds as required.
\end{proof}

We finally prove Theorem~\ref{separationthm}.  The statement is identical to that of Theorem~\ref{separationthmsmooth}, except it is no longer assumed that \(D\) is smooth at the generic point of \(V\).

\thmlocalsep*

\begin{proof}[Proof of Theorem~\ref{separationthm}]
We first apply Lemma~\ref{regularizedivisor} to obtain a birational map \(\pi_0 : Y_0 \to X\) satisfying conditions (1)--(3) of the lemma, so that \(\phi\) lifts to an automorphism \(\psi_0 : Y_0 \to Y_0\).  Let \(V_1,\ldots,V_r\) be the finitely many codimension-\(2\) \(\psi_0\)-invariant subsets of \(\pi^{-1}(V)\).  According to condition (3), \(\tilde{D}\) is smooth at the generic point of each \(V_i\).  Applying Theorem~\ref{separationthmsmooth} to each \(V_i\), we then obtain to desired blow-up \(\pi : Y \to X\)

To see that condition (4) holds on \(Y\), note that by (3) of Lemma~\ref{regularizedivisor} on \(Y_0\) we have \(\psi_0^m(\tilde{D}) \cap \psi_0^n(\tilde{D}) \cap \pi^{-1}(V)\) equal to the union of the \(V_i\): these intersections have no components other than the \(\psi_k\)-invariant sets.  Then by (3) of Theorem~\ref{separationthmsmooth}, we find that \(\pi(\psi^m(\tilde{D}) \cap \psi^n(\tilde{D}))\) does not contain \(V\).
\end{proof}

\section{Global separation by blow-ups}
\label{sectglobalsep}

The following result on varieties containing infinite sets of disjoint divisors will play a role in the proof of Theorem~\ref{mainblowup}.

\begin{proposition}[{\cite[Theorem 1.1]{disjointdivisors}}]
\label{propdisjoint}
Suppose that \(X\) is a normal projective variety defined over an algebraically closed field, and that \(\set{D_i}\) is an infinite set of pairwise disjoint irreducible divisors on \(X\).  Then there exists a smooth projective curve \(C\) and a map \(f : X \to C\) such that each \(D_i\) is contained in a fiber of \(f\).
\end{proposition}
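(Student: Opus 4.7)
The strategy, following \cite{disjointdivisors}, is to reduce the statement to producing two nonzero effective divisors $A$ and $B$ on $X$, each a positive integer combination of some of the $D_i$, with $\supp(A)\cap\supp(B)=\emptyset$ and $A\sim B$.  Given such a pair, I would choose sections $s_A,s_B\in H^0(X,\mathscr O_X(A))$ cutting out $A$ and $B$.  Since the supports are disjoint, these sections have no common zero, so $[s_A:s_B]$ defines a morphism $g\colon X\to\P^1$; taking Stein factorization $g=h\circ f$ gives a morphism $f\colon X\to C$ with connected fibers onto a smooth projective curve $C$ (normality, and hence smoothness, of $C$ is inherited from normality of $X$).

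Next, I would verify that every $D_j$ lies in a fiber of $f$.  If $D_j$ is not among the components of $A$ or $B$, then pairwise disjointness forces $D_j\cap(A\cup B)=\emptyset$, so $g|_{D_j}$ factors through the affine variety $\P^1\setminus\set{0,\infty}$; since $D_j$ is projective, $g|_{D_j}$ is constant, hence so is $f|_{D_j}$.  If $D_j$ is a component of $A$ or $B$, then $D_j\subseteq g^{-1}(0)\cup g^{-1}(\infty)$, and because $g^{-1}(0)\cup g^{-1}(\infty)=f^{-1}(h^{-1}\set{0,\infty})$ is a disjoint union of fibers of $f$, irreducibility of $D_j$ forces it to lie in a single fiber.

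The main obstacle is producing $A\sim B$.  First, finite-dimensionality of $\on{NS}(X)_{\BQ}$ (say of rank $\rho$) implies that any $\rho+1$ of the $D_i$ satisfy a nontrivial rational relation; splitting its coefficients by sign and clearing denominators yields effective integral combinations $A_0,B_0$ with disjoint supports and $A_0\equiv B_0$ numerically.  Second, I must promote $\equiv$ to $\sim$: the class $[A_0-B_0]\in\on{Pic}^\tau(X)$ lies in an extension of a finite group by the abelian variety $\on{Pic}^0(X)$.  The finite-order piece is killed by passing to a common multiple of $A_0$ and $B_0$.  Killing the $\on{Pic}^0$-component is the crux, and is where the hypothesis that $\set{D_i}$ is infinite (rather than merely large) is essential: exploiting the infinite supply of $D_i$ produces many further numerical relations among the $[D_i]$, and a divisibility argument inside $\on{Pic}^0(X)$, combined with the fact that the images of the $[D_i]$ generate a finitely generated subgroup there, allows one to arrange that the $\on{Pic}^0$-contribution of $[A_0-B_0]$ is canceled by such a correction.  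This final upgrade from numerical to linear equivalence, in the presence of a possibly positive-dimensional $\on{Pic}^0(X)$, is the hardest part and the reason the statement is nontrivial beyond pure dimension-counting in $\on{NS}$.
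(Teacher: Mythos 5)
The paper does not prove this proposition at all---it is imported verbatim as \cite[Theorem 1.1]{disjointdivisors}---so there is no internal argument to compare against; I can only assess your outline on its own terms. Your reduction and endgame are sound: once you have nonzero effective divisors $A\sim B$ supported on disjoint subsets of the $D_i$, the base-point-free pencil $[s_A:s_B]$, the Stein factorization, and the fiber-containment argument for the remaining $D_j$ all go through, and the first numerical relation $A_0\equiv B_0$ (with both sides nonzero, since a nonzero effective divisor on a projective variety cannot be numerically trivial) does follow from the finite-dimensionality of $\on{NS}(X)_{\BQ}$ as you say. Killing the torsion of $\on{NS}$ by passing to multiples, using finiteness of $\on{Pic}^\tau/\on{Pic}^0$, is also fine.

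The gap is exactly the step you flag as the crux, and your sketch of it does not work. You assert that the images of the relevant classes generate a finitely generated subgroup of $\on{Pic}^0(X)(k)$ and that a ``divisibility argument'' then cancels the $\on{Pic}^0$-component of $[A_0-B_0]$. Neither claim is justified, and no purely group-theoretic argument can succeed here: over $k=\C$, for instance, $\on{Pic}^0(X)(\C)\cong \C^q/\Lambda$ contains free abelian subgroups of infinite rank, so a countable family of numerically trivial classes arising from the $D_i$ could a priori satisfy no nontrivial $\Z$-linear relation at all. Producing a genuine linear equivalence requires geometric input tied to disjointness beyond what happens in $\on{NS}(X)$. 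Concretely, one first upgrades your single relation via the Hodge index theorem (the form $D\cdot D'\cdot H^{n-2}$ has signature $(1,\rho-1)$, and pairwise orthogonal classes force infinitely many $D_i$ to be proportional in $N^1(X)_{\R}$ with $D_i^2\cdot H^{n-2}=0$), and then one must argue geometrically---for example by exploiting that $\mathscr O_X(D_i)\vert_{D_i}$ is numerically trivial and running a Zariski/Mumford-type argument to produce sections of $\mathscr O_X(mD_i)$, or by noting that infinitely many of the suitably rescaled $D_i$ lie on a single irreducible component of the projective scheme of effective divisors with fixed class in $\on{NS}(X)$ and analyzing the Abel--Jacobi map of that component to $\on{Pic}^0(X)$. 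As written, your proposal replaces this essential step with an unsupported finiteness assertion, so the proof is incomplete.
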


We also need the following easy lemma.
\begin{lemma}
\label{abhyankar}
Suppose that \(\pi : Y \to X\) is a birational morphism between smooth projective varieties, and that \(f : Y \to C\) is a non-constant morphism to a smooth curve which does not descend to $X$.  Then \(C\) is isomorphic to \(\P^1\).
\end{lemma}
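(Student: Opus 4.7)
The plan is to prove the contrapositive: if $C$ is not $\P^1$, then $f$ must descend to $X$. First, I would reformulate the descent condition in terms of a rational map. Since $\pi$ is birational there is a dense open $U \subseteq X$ on which $\pi^{-1}$ is defined as a morphism, and then $f \circ \pi^{-1}|_U$ is a regular map to $C$, defining a rational map $\psi : X \dashrightarrow C$. A standard separatedness argument shows that $f$ descends to a morphism $g : X \to C$ if and only if $\psi$ extends to a morphism on all of $X$: any descent of $f$ restricts to $\psi$ on $U$, and conversely any extension $g$ satisfies $g \circ \pi = f$ on the dense open $\pi^{-1}(U) \subseteq Y$, hence everywhere by separatedness.

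Next, observe that $C$ is automatically proper: since $Y$ is projective and $f$ is non-constant, the image $f(Y) \subseteq C$ is a closed irreducible subvariety of positive dimension in the smooth curve $C$, hence must equal $C$, so $C$ is the image of a projective variety and is therefore a smooth projective curve. Suppose for contradiction that the genus $g$ of $C$ is at least $1$. Then the Abel--Jacobi map $j : C \hookrightarrow J(C)$ embeds $C$ into its Jacobian, which is an abelian variety. The composition $j \circ \psi : X \dashrightarrow J(C)$ is a rational map from a smooth projective variety to an abelian variety, and by the classical theorem of Weil (see, e.g., Milne's notes on abelian varieties), every such rational map extends to a morphism defined everywhere. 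This yields a morphism $\tilde\psi : X \to J(C)$ which agrees with $j \circ \psi$ on $U$. Since $X$ is irreducible and $j(C) \subseteq J(C)$ is closed, the image $\tilde\psi(X)$ is contained in $j(C)$, so $\tilde\psi$ factors through a morphism $X \to C$ extending $\psi$. By the reformulation above this produces a descent of $f$, contradicting the hypothesis. Hence $C \cong \P^1$.

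The only substantive input is the extension theorem for rational maps from smooth varieties to abelian varieties, which is classical; everything else is formal, and I do not anticipate serious obstacles. An alternative route would be to exhibit an exceptional divisor of $\pi$ on which $f$ is non-constant and invoke the fact that components of fibers of birational morphisms between smooth projective varieties (in characteristic zero) are rationally chain connected, forcing $C$ to contain a rational curve. The Jacobian approach, however, is shorter and sidesteps any appeal to the structure of exceptional loci.
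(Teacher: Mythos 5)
Your proof is correct, but it takes a genuinely different route from the paper's. The paper argues directly on \(Y\): since \(f\) does not descend, some positive-dimensional fiber of \(\pi\) must dominate \(C\), and such fibers are rationally chain connected, which forces \(C \cong \P^1\) --- this is precisely the alternative you sketch in your last sentence, and it rests on a structural fact about exceptional loci of birational morphisms between smooth projective varieties (ruledness / rational chain connectedness of fibers). Your main argument instead works on \(X\): you reduce descent to the extendability of the rational map \(\psi : X \dashrightarrow C\), note that \(C = f(Y)\) is a smooth projective curve, and, when \(C\) has genus at least \(1\), extend \(j \circ \psi\) to a morphism \(X \to J(C)\) by Weil's theorem on rational maps from smooth varieties to abelian varieties, then factor back through \(C\) using that \(j(C)\) is closed and that the locus of definition is dense. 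All the steps check out: the separatedness reformulation of descent, the properness of \(C\), and the factorization through the closed immersion \(j\) are each sound. What your route buys is complete independence from the geometry of the exceptional locus, at the cost of invoking the Albanese-type extension theorem; the paper's route is shorter once the rational-connectedness input is granted and makes the geometric mechanism (rational curves in fibers of \(\pi\) surjecting onto \(C\)) explicit.
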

\begin{proof}
Some positive-dimensional fiber of $\pi$ must dominate $C$, as $f$ does not descend to $X$.  But such fibers are rationally connected, so $C$ is $\mathbb{P}^1$.
\end{proof}

We now prove Theorem~\ref{mainblowup}.  The strategy is to repeatedly apply Theorem~\ref{separationthm} to components of \(D \cap \phi^n(D)\), eventually rendering the infinitely many divisors \(\phi^n(D)\) pairwise disjoint.

\begin{theorem}
\label{mainblowupconstruct}
Let \(X\) be a smooth variety over \(k\) and \(\phi : X \to X\) an automorphism.  Suppose that \(D \subset X\) is an irreducible divisor such that  
\[
V(D,\phi) = \set{ d\in D : \text{$\phi^n(d) \in D$ for some nonzero $n$} } = \bigcup_{n \neq 0} D \cap \phi^n(D)
\]
is a proper Zariski-closed subset of $D$.  Then there exists a projective birational morphism \(\pi : Y \to X\) such that
\begin{enumerate}
\item \(Y\) is smooth;
\item some iterate of \(\phi\) lifts to an automorphism \(\psi : Y \to Y\);
\item the divisors \(\psi^n(\tilde{D})\) are pairwise disjoint.
\end{enumerate}
\end{theorem}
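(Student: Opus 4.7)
The plan is to iteratively apply the local separation theorem (Theorem~\ref{separationthm}) to the codimension-$2$ components of the ``indeterminacy set'' $V(D,\phi)$, and to argue termination via Noetherian induction.

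First I would dispose of some structural preliminaries. Because $V(D,\phi) \subsetneq D$, the divisor $D$ is not $\phi$-periodic, so for each $n \neq 0$ the divisors $D$ and $\phi^n(D)$ are distinct irreducible hypersurfaces in the smooth $X$. Krull's Hauptidealsatz forces $D \cap \phi^n(D)$ to have pure codimension $2$ in $X$, and since $V(D,\phi)$ is Zariski-closed in the Noetherian $X$ the ascending chain of finite subunions $\bigcup_{n\in S}(D \cap \phi^n(D))$ stabilizes. Thus $V(D,\phi)$ is a finite union of codimension-$2$ subvarieties $V_1, \ldots, V_s$; since $\phi$ permutes the $V_i$, I may replace $\phi$ by an iterate so that each $V_i$ is $\phi$-invariant, as permitted by conclusion (2).

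The main construction is by Noetherian induction. I would formulate the hypothesis $P(W)$ for a closed subset $W\subseteq X$ as follows: \emph{for any smooth projective $Y$ equipped with a projective birational map $\pi : Y \to X$, any automorphism $\psi$ of $Y$ lifting a power of $\phi$, and any irreducible divisor $\tilde D \subset Y$ with $V(\tilde D, \psi) \subseteq \pi^{-1}(W)$, there exists a projective birational $Y' \to Y$ satisfying (1)--(3).} The base case $W = \emptyset$ is immediate, since then $V(\tilde D, \psi) = \emptyset$ and $\psi$-equivariance makes the $\psi^n(\tilde D)$ pairwise disjoint. For the inductive step I would pick an irreducible codimension-$2$ component $\tilde V$ of $\overline{V(\tilde D, \psi)}$ in $Y$; after another iterate $\tilde V$ is $\psi$-invariant. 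Apply Theorem~\ref{separationthm} to $(Y, \psi, \tilde D, \tilde V)$ to obtain $\pi'' : Y'' \to Y$ and a lift $\psi''$ satisfying clause (4): $\pi''((\psi'')^m \tilde{\tilde D} \cap (\psi'')^n \tilde{\tilde D}) \not\supseteq \tilde V$ for all $m \neq n$. Since $Y$ is Noetherian, $\overline{V(\tilde{\tilde D}, \psi'')}$ is already a \emph{finite} union of intersections, so its $\pi''$-image is a finite union of closed subsets of $Y$ each failing to contain the irreducible $\tilde V$; hence the image itself fails to contain $\tilde V$. Pushing forward to $X$ and using that $\pi''$ is an isomorphism away from $\tilde V$, the new bounding set $W'$ in $X$ is strictly contained in $W$, and I invoke $P(W')$.

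The main obstacle I expect is pinning down the strict decrease $W' \subsetneq W$ in $X$: $V(\tilde D, \psi)$ is only a countable union of closed subsets of $Y$ and need not itself be closed, and $\pi$ can contract subvarieties in ways that inflate push-forward images. The remedy is to exploit Noetherianity of $Y$ to reduce $\overline{V(\tilde D, \psi)}$ to a finite union (so that clause (4) applied term-by-term, combined with the irreducibility of the target $\tilde V$, really does prevent the union from covering $\tilde V$), and to choose $\tilde V$ lying over a component of $W$ of maximal dimension so that the $\pi$-image genuinely loses a component. Noetherian induction on closed subsets of the fixed space $X$ then terminates the process after finitely many blow-ups.
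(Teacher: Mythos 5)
Your overall strategy (repeatedly invoke the local separation theorem at codimension-$2$ components and induct) matches the paper's, but there is a genuine gap at the step where you prepare to apply Theorem~\ref{separationthm}: you assert that ``$\phi$ permutes the $V_i$,'' so that each component of $V(D,\phi)$ becomes $\phi$-invariant after passing to an iterate. This is false in general. The set $V(D,\phi)$ is contained in $D$, while $\phi(V(D,\phi)) = \bigcup_{n\neq 0}\phi(D)\cap\phi^{n+1}(D)$ is contained in $\phi(D)$; since $D$ is not periodic these are different divisors, and there is no reason for $\phi$ to preserve $V(D,\phi)$ or permute its components. (What is true is only that a component of $D\cap\phi^n(D)$ is carried by $\phi^{-n}$ --- for that specific $n$ --- to a component of $D\cap\phi^{-n}(D)$.) The same unjustified claim recurs in your inductive step (``after another iterate $\tilde V$ is $\psi$-invariant''). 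Since Theorem~\ref{separationthm} requires $\phi(V)=V$ as a hypothesis, the argument does not go through as written.

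Producing an invariant component is exactly where the dynamical Mordell--Lang machinery enters, and it cannot be bypassed by a permutation argument. The paper's route: if $V(D,\phi)\neq\emptyset$ persists after iterates, some component $V$ lies in $D\cap\phi^{n_i}(D)$ for an unbounded sequence $n_i$, so $A_{\phi^{-1}}(V,D)=\set{n : V\subset\phi^n(D)}$ is an infinite semilinear set; by Lemma~\ref{stepsize}, after replacing $\phi$ by an iterate either $V\not\subset\phi^n(D)$ for all $n\neq 0$ (so $V$ simply drops out of $V(D,\phi)$ and the induction advances) or $V\subset D\cap\phi^n(D)$ for \emph{all} $n$; in the latter case $\phi^{-n}(V)$ is one of the finitely many components for every $n$, so pigeonhole gives $\phi^{n-m}(V)=V$ and a further iterate makes $V$ genuinely invariant. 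You need this (or an equivalent) argument before Theorem~\ref{separationthm} can be applied. Your remaining points --- that $V(D,\phi)$ is a finite union of codimension-$2$ components (though note this follows because each pure-codimension-$2$ set $D\cap\phi^n(D)$ is a union of components of the closed set $V(D,\phi)$, not from an ``ascending chains of closed sets stabilize'' principle, which fails in Noetherian spaces), and the termination/strict-decrease bookkeeping --- are essentially sound and parallel the paper's induction on the number of components.
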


\begin{proof}
The proof is by induction on the number of components of \(V(D,\phi)\).  If \(V(D,\phi)\) is empty, claims (1)--(3) of the theorem are already satisfied.  Observe that replacing \(\phi\) by an iterate can serve only to shrink the set \(V(D,\phi)\) and decrease the number of components, and so we freely replace \(\phi\) by iterates in what follows.

We claim now that there exists a sequence \(n_i\) with \(\abs{n_i}\) unbounded, such that \(D \cap \phi^{n_i}(D)\) is nonempty for each \(i\).  Indeed, suppose that there exists \(N\) for which \(\phi^n(D) \cap D\) is empty for all \(n\) with \(\abs{n} > N\).  Replacing \(\phi\) by the iterate \(\phi^N\), we may assume that \(\phi^n(D) \cap D\) is empty for all \(n \neq 0\).  But then \(\phi^m(D) \cap \phi^n(D) = \phi^m(D \cap \phi^{n-m}(D))\) is empty for any \(m \neq n\).  Then claims (1), (2), and (3) hold with \(\pi\) the identity map.

Let \(V_0,\ldots,V_r\) be the finitely many components of \(\bigcup_{n \neq 0} D \cap \phi^n(D)\), so that each \(V_i\) is a codimension-\(1\) subvariety of \(D\).  Since the intersections \(D \cap \phi^{n_i}(D)\) are nonempty, there must exist some irreducible component \(V = V_j\) which is contained in \(D \cap \phi^{n_i}(D)\) for infinitely many of the \(n_i\).

Thus the set \(A_{\phi^{-1}}(V,D) = \set{n : \phi^{-n}(V) \subset D} = \set{n : V \subset \phi^n(D)}\) is infinite. By Lemma~\ref{stepsize}, after replacing \(\phi\) by an iterate, we may assume either that \(\phi^n(V)\) is not contained in \(D\) for any \(n\), or that it is contained in \(V\) for all \(n\).  In the former case, \(V\) is no longer contained in \(V(D,\phi)\), and we continue the induction.  So we may then assume that \(V \subset D \cap \phi^n(D)\) for all \(n\).  Applying \(\phi^{-n}\), we see that \(\phi^{-n}(V) \subset D \cap \phi^{-n}(D)\) for all \(n\).  Since \(\phi^{-n}(V)\) has codimension \(1\) in \(D\), it must coincide with some component \(V_i\), and since there are only finitely many \(V_i\), we must have \(\phi^{-n}(V) = \phi^{-m}(V)\) for some distinct \(m\) and \(n\).  But then \(\phi^{n-m}(V) = V\), and replacing \(\phi\) by the iterate \(\phi^{n-m}\), we may assume that \(\phi(V) = V\).

We now apply the local statement of Theorem~\ref{separationthm} to \(\phi : X \to X\) along the invariant subvariety \(V\) and obtain a model \(\pi : Y \to X\) on which \(\phi\) lifts to an automorphism and for which \(\pi(\psi^m(\tilde{D}) \cap \psi^n(\tilde{D}))\) does not contain \(V\).  Suppose now that \(W \subset V(\tilde{D},\psi)\) is a codimension-\(2\) subvariety of \(Y\).  Then \(W \subset \tilde{D} \cap \psi^n (\tilde{D})\) for some \(n\), and \(\pi(W) \subset D \cap \phi^n(D)\).  This shows that the components of \(V(\tilde{D},\psi)\) are a subset of those of \(V(D,\phi)\).  Since the component \(V\) has been removed and \(\pi\) is an isomorphism away from \(V\), the cardinality of this set decreases.  Applying this procedure inductively, we eventually obtain a model \(\pi : Y \to X\) satisfying conditions (1)--(3).
\end{proof}

\begin{corollary}
\label{disjointdivsapp}
Let \(\phi : X \to X\) be as in Theorem~\ref{mainblowupconstruct}, and let \(\pi : Y \to X\) be the birational model produced by the theorem. Assume in addition that $X$ is projective.

Then there exists a curve \(C\), a morphism \(f : Y \to C\), and an automorphism \(\tau : C \to C\) such that \(f \circ \psi = \tau \circ f\).   Moreover, the normal bundle of \(D\) satisfies \(H^0(D,N_{D/X}) > 0\) and \(D\) moves in a positive-dimensional family on \(X\).  If $V(D,\phi)$ is non-empty, the divisor \(D\) is linearly equivalent to a \(\phi\)-periodic divisor.
\end{corollary}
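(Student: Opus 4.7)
The plan is to extract a $\psi$-equivariant fibration from the infinite family $\{\psi^n(\tilde D)\}$ of pairwise disjoint divisors on $Y$, and then read off the deformation and linear-equivalence statements from properties of its fibers.

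First, I would apply Proposition~\ref{propdisjoint} to $\{\psi^n(\tilde D)\}$, obtaining a morphism $f_0 : Y \to C_0$ to a smooth projective curve with each $\psi^n(\tilde D)$ contained in a fiber; after Stein factorization I may assume $f : Y \to C$ has connected fibers.  Since $\psi$ permutes the divisors $\psi^n(\tilde D)$ and these lie in infinitely many distinct fibers of $f$, a standard rigidity argument shows that $\psi$ sends fibers of $f$ to fibers of $f$ and descends to an automorphism $\tau : C \to C$ with $f \circ \psi = \tau \circ f$.  To see that $D$ moves on $X$ and that $H^0(D, N_{D/X}) > 0$, I would push the $1$-parameter family of fibers of $f$ forward along $\pi$: the map $c \mapsto \pi_*(f^{-1}(c)) \in \on{Hilb}(X)$ is non-constant because distinct fibers of $f$ have disjoint supports and only finitely many prime divisors of $Y$ are contracted by $\pi$.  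This produces a positive-dimensional family of effective divisors on $X$ containing $D$, and the induced tangent vector at $[D]$ is a nonzero section of $N_{D/X}$.

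For the final assertion, assume $V(D,\phi) \neq \emptyset$.  I would first verify that $f$ does not descend to $X$, so that $C \cong \P^1$ by Lemma~\ref{abhyankar}: if instead $f = g \circ \pi$ for some $g : X \to C$, then each $\phi^n(D)$ would lie in the fiber $g^{-1}(\tau^n c_0)$, and since the points $\tau^n c_0$ are distinct, the divisors $\phi^n(D)$ would be pairwise disjoint on $X$, contradicting the nonemptiness of $V(D,\phi)$.  With $C = \P^1$, all fibers of $f$ are linearly equivalent; by generic smoothness in characteristic zero and connectedness of fibers, for all but finitely many $n$ the fiber $F_n := f^{-1}(\tau^n c_0)$ is reduced and irreducible, whence the inclusion $\psi^n(\tilde D) \subseteq F_n$ (of irreducible divisors of the same dimension) is an equality of Cartier divisors.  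Writing $F$ for a general fiber, the class $[F] \in \on{Pic}(Y)$ is $\psi^*$-invariant (since $\tau^*$ acts trivially on $\on{Pic}(\P^1)$), and combining this with $[\psi^n \tilde D] = [F]$ yields $[\tilde D] = [F]$.  Taking $F^* = f^{-1}(c^*)$ for $c^* \in \P^1$ a $\tau$-fixed point, $F^*$ is $\psi$-invariant and $\tilde D \sim F^*$; pushing forward along the birational morphism $\pi$ then yields $D \sim \pi_* F^*$, a $\phi$-periodic divisor on $X$.

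The main obstacle is the identification $[\tilde D] = [F]$ in $\on{Pic}(Y)$, since Proposition~\ref{propdisjoint} only guarantees that $\psi^n(\tilde D)$ is contained in a fiber of $f$ rather than equal to one.  Handling this requires the generic smoothness argument above together with the passage to the Stein factorization.
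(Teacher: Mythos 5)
Your proposal is correct and follows essentially the same route as the paper: Proposition~\ref{propdisjoint} to get the fibration, descent of $\psi$ to an automorphism $\tau$ of $C$, Lemma~\ref{abhyankar} (via the observation that descent of $f$ to $X$ would force the $\phi^n(D)$ to be disjoint) to get $C\cong\P^1$, and a $\tau$-fixed point to produce the periodic divisor. The only differences are cosmetic: the paper descends $\psi$ via the induced map $C\to\Hilb(Y)$ where you invoke a rigidity argument, and you are somewhat more careful than the paper in upgrading ``contained in a fiber'' to ``equal to a fiber'' and in extracting the section of $N_{D/X}$ from the family.
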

\begin{proof}
Since the divisors \(\psi^n(\tilde{D})\) are pairwise disjoint, it follows from Proposition~\ref{propdisjoint} that there exists a map \(f : Y \to C\) such that all of the divisors \(\psi^n(\tilde{D})\) are contained in fibers of \(f\).  The map \(f\) has only finitely many reducible fibers, and so there are infinitely many $m$ for which \(\psi^m(\tilde{D})\) and \(\psi^{m+1}(\tilde{D})\) are both irreducible fibers of \(f\). 

At last we show that \(\psi\) induces an automorphism of \(C\).  The map $Y\to C$ is flat by e.g. the miracle flatness theorem, and hence induces a map $C\to \Hilb(Y)$.  Let $B\subset \Hilb(Y)$ be the image of this map; $C$ is the normalization of $B$.  The automorphism \(\psi\) induces an automorphism \(\psi_H : \Hilb(Y) \to \Hilb(Y)\).  Since \(\psi_H([\psi^m(D)]) = [\psi^{m+1}(D)]\), we see that \(\psi_H(B)\) meets \(B\) infinitely many times. Since \(B\) is one-dimensional, it must be that \(B\) is preserved by \(\psi_H\).  Consequently \(\psi\) induces an automorphism of $B$, and hence $C$.  We call this morphism \(\tau : C \to C\); $\tau$ satisfies  \(\phi \circ f = f \circ \tau\) by construction.

The divisor \(\tilde{D}\) is a fiber of \(f :Y \to C\), and so moves in a \(1\)-parameter family inside \(Y\).  It follows that \(D\) moves in a \(1\)-parameter family inside \(X\) and that \(H^0(X,D) > 0\) as claimed.

By Lemma~\ref{abhyankar}, if \(V(D,\phi)\) is non-empty, it must be that \(C\) is isomorphic to \(\P^1\).  It follows that the divisors \(\phi^n(D)\) are all linearly equivalent; as any automorphism of $\mathbb{P}^1$ has a fixed point, this implies that $D$ is linearly equivalent to a periodic divisor.
\end{proof}
Putting together Theorem \ref{mainblowupconstruct} and Corollary \ref{disjointdivsapp}, we have proven Theorem \ref{mainblowup}:
\thmglobalsep*

\section{Applications to automorphism groups}
\label{secblowupautos}

In this section, we will explain how the results of the previous section can be applied in a geometric context to study automorphism groups of varieties constructed by sequences of blow-ups.  Before beginning the proof, we collect some preliminary observations about varieties with multiple contractible divisors.

\begin{lemma}
\label{cantintersect}
Suppose that \(X\) is a smooth projective variety of dimension \(n\), and that \(\pi_i : X \to Y_i\) (\(i \in I\)) are distinct morphisms realizing \(X\) as the blow-up of smooth variety \(Y_i\) at a center of dimension less than or equal to \(r\).  
\begin{enumerate}
\item If \(2r+3 \leq n\), then any two exceptional divisors \(E_i\) and \(E_j\) either coincide or are pairwise disjoint. Moreover, the set of exceptional divisors \(E_i\) is finite.
\item If \(r+3 \leq n\) and \(\Nef(E_0)\) is a rational polyhedral cone, then there exists a finite set of irreducible divisors \(W_i \subset E_0\) such that if \(E_j\) is distinct from \(E_0\), then \(E_0 \cap E_j \subseteq \bigcup_i W_i\) for every \(j \neq 0\).
\end{enumerate}
\end{lemma}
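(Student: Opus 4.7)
Suppose for contradiction that $E_i \neq E_j$ but $E_i \cap E_j \neq \emptyset$. As distinct prime Cartier divisors on the smooth variety $X$, their intersection is pure of codimension $2$, so of dimension $n-2$. The product morphism
\[
(\pi_i,\pi_j)\big|_{E_i\cap E_j} : E_i\cap E_j \longrightarrow Z_i\times Z_j
\]
has image of dimension at most $2r$, so the hypothesis $2r+3\leq n$ forces its generic fiber to have dimension at least $n-2-2r\geq 1$. Any curve $C$ in such a positive-dimensional fiber is contracted by both Mori contractions $\pi_i$ and $\pi_j$, so $[C]$ lies in each of the extremal rays $R_i,R_j\subset\overline{NE}(X)$, each of which is spanned by a line class in a fiber of $\pi_k|_{E_k}$. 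Two one-dimensional rays sharing a nonzero class coincide, so $R_i=R_j$, and since an extremal Mori contraction is determined by its ray, this forces $\pi_i=\pi_j$ and $E_i=E_j$, contradicting our assumption. Finiteness of the set of exceptional divisors then follows from linear independence in $N^1(X)$: if $\sum a_k[E_k]=0$, restricting to $E_j$ and using pairwise disjointness gives $a_j[E_j]|_{E_j}=0$, while $[E_j]|_{E_j}$ has negative degree on a fiber line of $\pi_j|_{E_j}$, forcing $a_j=0$.

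\textbf{Part (2).} For each $j$ with $E_j\neq E_0$, set $f_j:=\pi_j|_{E_0}:E_0\to \pi_j(E_0)$. Since $\pi_j$ is an isomorphism on $X\setminus E_j$, the map $f_j$ is an isomorphism on the dense open set $E_0\setminus(E_0\cap E_j)$ and hence birational. Any irreducible component $W$ of $E_0\cap E_j$ has dimension $n-2\geq r+1$ (using $n\geq r+3$), while $f_j(W)\subseteq Z_j$ has dimension at most $r$, so $W$ is contracted by $f_j$. Let $F_j\subseteq \overline{NE}(E_0)$ denote the face generated by the numerical classes of curves contracted by $f_j$. Because $\Nef(E_0)$ is rational polyhedral, its Kleiman dual $\overline{NE}(E_0)$ is also polyhedral and has only finitely many faces; hence $F_j$ takes only finitely many values as $j$ varies. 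The crucial point is that the collection of irreducible divisors contracted by $f_j$ depends only on $F_j$: a divisor $W\subset E_0$ is contracted by $f_j$ if and only if the general fiber of $f_j|_W$ is positive-dimensional, if and only if $W$ is covered through a general point by a family of curves whose classes lie in $F_j$. Since $f_j$ is birational it contracts only finitely many divisors, so the union of divisors contracted across the finite set of realized faces produces the desired finite collection $\{W_i\}$. As $E_0\cap E_j$ is pure of codimension $1$ in $E_0$, every irreducible component of $E_0\cap E_j$ is among the $W_i$, giving $E_0\cap E_j\subseteq\bigcup_i W_i$.

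\textbf{Main obstacle.} The delicate step is verifying that whether an irreducible divisor $W\subset E_0$ is contracted by $f_j$ is determined by the face $F_j$ alone. The reverse implication, that curves of class in $F_j$ are annihilated by $(f_j)_*$ and hence contracted, is immediate from the definition. The forward direction requires that a contracted divisor $W$ is covered by positive-dimensional fibers of $f_j|_W$ whose curve classes witness membership in $F_j$. One must also check that ``curves contracted by $f_j$'' genuinely forms a face of $\overline{NE}(E_0)$, which relies on strict convexity of $\overline{NE}(\pi_j(E_0))$---a consequence of the projectivity of $\pi_j(E_0)$ as a closed subvariety of $Y_j$.
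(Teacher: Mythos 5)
Your proof is correct and follows essentially the same approach as the paper: in both parts a dimension count produces curves contracted by two of the blow-downs, and the polyhedrality of \(\Nef(E_0)\) is used exactly as in the paper to bound the contracted divisors. The only cosmetic difference is in the endgame of part (1), where you invoke uniqueness of the contraction of an extremal ray (rigidity), while the paper argues directly that a morphism cannot contract a curve in a fiber \(\P^{n-r-1}\) without contracting the whole fiber.
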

\begin{proof}
First we prove (1).  Suppose that some \(E_i\) and \(E_j\) have nonempty intersection, and choose a point \(x\) be a point in the intersection.  Let \(F \cong \P^{n-r-1}\) be the fiber of \(\pi_i\) containing \(x\) and \(F^\prime \cong \P^{n-r-1}\) be the fiber of \(\pi_j\) containing \(x\).  Since \(F \cap F^\prime\) is nonempty and \(n \geq 2r+3\),  the intersection of \(F\) and \(F^\prime\) must contain a curve \(\Gamma\).  The map \(\pi\) contracts $F$ and hence \(\Gamma\) to a point. But it is impossible to contract a positive-dimensional subvariety of \(F^\prime\) without contracting all of \(F^\prime\), as $F'\simeq \mathbb{P}^{n-r-1}$.  Hence $E_i=E_j$.

Now we prove the finiteness claim.  Let \(f_i\) be a line contained in a fiber of \(E_i\), so that \(E_i \cdot f_i = -1\). Since \(E_i\) is disjoint from \(E_j\) for \(i \neq j\), it must be that \(E_i \cdot f_j = 0\) if \(i \neq j\).  This implies that the classes of the \(E_i\) are linearly independent in \(N^1(X)\).  Since \(N^1(X)\) is finite-dimensional, there can be at most finitely many contractions.

Next we prove (2).  Let \(x\) be any point of \(E_0 \cap E_j\), and let \(F_0\) and \(F_j\) be the fibers of the maps \(\pi_0\) and \(\pi_j\) passing through \(X\).  Observe that \(F_j \cap E_0\) is contracted to a point by  \(\pi_j\), and has dimension \(n-r-2\).  Since \(r+3 \leq n\), this intersection has positive dimension.  It follows that the divisor \(E_0 \cap E_j \subset E_0\) is contracted by the map \(\pi_j\).

Any map \(\pi_i\vert_{E_0} : E_0 \to \pi_i(E_0)\) contracting a divisor determinates a codimension \(1\) face of \(\Nef(E_0)\), and two contractions with different fibers determine different faces of the cone.  
Since \(\Nef(E_0)\) has only finitely many faces by hypothesis, only finitely many divisors \(W_i\) appear among the exceptional divisors of contractions \(\pi_i\vert_{E_0}\).    Hence the support of the intersection \(E \cap \phi^n(E)\) must be contained in the union \(W = \bigcup_i W_i\), for every value of \(n\).
\end{proof}

We will later apply this lemma in the case that \(X\) is a variety with an infinite order automorphism \(\phi : X \to X\), and the maps \(\pi_i : X \to Y_i\) are of the form \(\pi_0 \circ \phi^{-n}\), where \(\pi_0 : X \to Y_0\) is some fixed contraction.  We also note that condition (2) holds automatically if \(\pi : Y \to X\) is the blow-up of \(X\) along any variety of Picard rank \(1\), since then \(E_0\) has Picard rank \(2\).

\begin{lemma}
\label{periodicexceptional}
Suppose that \(\pi : Y \to X\) is the blow-up along a smooth subvariety with exceptional divisor \(E\).  Let \(\phi : Y \to Y\) be an automorphism with \(\phi(E) = E\).  Then the iterate \(\phi^{\rho(E)}\) descends to an automorphism of \(X\).
\end{lemma}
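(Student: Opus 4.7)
\textbf{The plan is} to reduce descent to a condition on the action of $\phi|_E$ on N\'eron--Severi classes. Since $\pi$ is an isomorphism away from $E$ and $\phi(E)=E$, the map $\phi$ descends to an automorphism of $X$ if and only if $\phi|_E$ sends fibers of the projective bundle $\pi|_E : E \to Z$ to fibers, where $E \cong \mathbb{P}(N_{Z/X})$ is identified with the projective bundle over the smooth blow-up center $Z$.  The projection $\pi|_E$ is the Mori contraction of the extremal ray $R = \mathbb{R}_{\geq 0}[\ell] \subset \overline{NE}(E)$, with $\ell$ the class of a line in a fiber, so $\phi|_E$ preserves this fibration if and only if $(\phi|_E)_*[\ell] = [\ell]$ in $N_1(E)$.

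\textbf{Next,} from $\phi(E)=E$ we get $\phi^*[E]=[E]$ in $N^1(Y)$, and restricting yields $(\phi|_E)^*[E|_E] = [E|_E]$ in $N^1(E)$.  Setting $\xi := -[E|_E]$, the tautological class $\mathcal{O}_E(1)$ of the projective bundle, we find that $\xi$ is fixed under $(\phi|_E)^*$.  Since $\xi \cdot \ell = 1$, the $(\phi|_E)_*$-orbit of $\ell$ lies in the affine hyperplane $\{c \in N_1(E)_\mathbb{Q} : \xi \cdot c = 1\}$, an affine subspace of dimension $\rho(E)-1$ inside $N_1(E)_\mathbb{Q}$.

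\textbf{The key step,} which I expect to be the main obstacle, is showing that the orbit length divides $\rho(E)$.  Each orbital class $(\phi|_E)_*^n \ell$ is the fiber class of a distinct projective bundle structure $\pi|_E \circ (\phi|_E)^{-n} : E \to Z$, and spans a distinct extremal ray of $\overline{NE}(E)$ corresponding to a Mori contraction onto a variety of Picard rank $\rho(E)-1$.  One must argue---using the $(\phi|_E)_*$-invariance of $\xi$, the affine-linear dynamics on a $(\rho(E)-1)$-dimensional space, and rigidity of Mori contractions---that the iterated numerical fiber classes are sufficiently constrained within $N_1(E)_\mathbb{Q}$ to force the orbit length to divide $\rho(E)$; the cleanest route is to show that a collection of distinct orbital fiber classes is linearly independent in $N_1(E)_\mathbb{Q}$, whose dimension is exactly $\rho(E)$.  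Once this bound is in hand, $(\phi|_E)_*^{\rho(E)}[\ell] = [\ell]$, so $\phi^{\rho(E)}|_E$ preserves the extremal ray $R$ and hence the contraction $\pi|_E$; it therefore permutes fibers of $\pi|_E$ among themselves, and the universal property of the blow-up yields an automorphism $\phi_X : X \to X$ satisfying $\pi \circ \phi^{\rho(E)} = \phi_X \circ \pi$.
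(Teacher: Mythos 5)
Your reduction is the same one the paper uses: $\phi$ descends precisely when (an iterate of) $\phi|_E$ carries fibers of the bundle $\pi|_E : E \to Z$ to fibers, and this is controlled by the orbit of the fiber-line class $[\ell]$ (equivalently, of the bundle structure itself) under $\phi|_E$. The problem is that the step you yourself flag as ``the main obstacle'' --- that distinct orbital fiber classes $(\phi|_E)_*^n[\ell]$ are linearly independent in $N_1(E)_{\mathbb{Q}}$, hence that the orbit has length at most $\rho(E)$ --- is exactly the content of the result the whole lemma rests on, and you do not prove it. Each $\pi|_E \circ (\phi|_E)^{-n}$ is a distinct $\mathbb{P}^{n-r-1}$-bundle structure on $E$, and the assertion that a variety carries at most $\rho(E)$ such structures is a genuine theorem of Wi\'{s}niewski (Theorem 2.2 of the cited paper), proved using the structure of fiber-type extremal contractions; it is not something that follows formally from the invariance of $\xi = -[E|_E]$ or from the affine-linear dynamics on the hyperplane $\{\xi \cdot c = 1\}$. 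Distinct extremal rays of $\overline{NE}(E)$ are in general far from linearly independent (a surface with $\rho=2$ can have many), so the linear independence you want really does require the special geometry of $\mathbb{P}^k$-bundle fibers. As written, your argument assumes its key input.

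Two smaller remarks. First, even granting the bound, an orbit of length $k \le \rho(E)$ only gives that $\phi^k$ preserves the fibration; for $\phi^{\rho(E)}$ itself one needs $k \mid \rho(E)$. The paper's statement has the same looseness (and only the existence of a bounded descending iterate is used downstream), so this is not a defect specific to your write-up, but you should not present ``orbit length divides $\rho(E)$'' as the target when what your proposed mechanism could deliver is only ``orbit length is at most $\rho(E)$.'' Second, the final descent step (fibers map to fibers implies $\phi$ induces a morphism of $X$, not merely a bijection) deserves a sentence --- e.g.\ via the universal property of the blow-down or normality of $X$ --- though this part is routine.
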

\begin{proof}
This is a case of~\cite[Lemma 3.2]{jdlthreeautos}.  Briefly, it suffices to show that some iterate of \(\phi\) sends fibers of \(E\) to fibers of \(E\), and then \(\phi\) descends to an automorphism of \(X\).  This in turn follows from the fact that \(E\) is a \(\P^n\)-bundle, and a given variety \(E\) has at most \(\rho(E)\) different \(\P^n\)-bundle structures~\cite[Theorem 2.2]{wisniewski}.
\end{proof}

\begin{lemma}
\label{boundperiods}
Suppose that \(\phi : X \to X\) is an automorphism of a smooth projective variety.  There is a constant \(e\) depending only on \(\rho(X)\) (and not on \(\phi\)) such that if \(D\) is a divisor  which is rigid in its cohomology class (in the sense that \(D\) is the only effective divisor with class \([D]\) in \(N^1(X)\)), and \(\phi^n(D) = D\) for some positive \(n\), there exists \(0 < n^\prime \leq e\) with \(\phi^{n^\prime}(D) = D\).
\end{lemma}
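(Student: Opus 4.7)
The plan is to apply the uniform Skolem-Mahler-Lech result, Corollary~\ref{uniformmsl}, to the induced linear action of $\phi^*$ on the N\'eron-Severi group $N^1(X) \cong \mathbb{Z}^{\rho(X)}$, which is a free abelian group of rank $\rho(X)$. The rigidity hypothesis on $D$ is precisely what makes this reduction possible: if $(\phi^*)^n [D] = [D]$ in $N^1(X)$, then $\phi^{-n}(D)$ is an effective divisor numerically equivalent to $D$, and therefore equals $D$. Thus the set
\[
A := \{n \in \mathbb{Z} : \phi^n(D) = D\} = \{n \in \mathbb{Z} : (\phi^*)^n [D] = [D]\}
\]
coincides with the stabilizer of $[D]$ under the linear $\mathbb{Z}$-action of $\phi^*$ on $\mathbb{Z}^{\rho(X)}$.

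Applying Corollary~\ref{uniformmsl} with $r = \rho(X)$ and $x = y = [D]$ then yields that $A$ is a two-sided semilinear set whose period is bounded by some constant $e$ depending only on $\rho(X)$. In particular, there exists $M$ such that for every integer $m$ with $|m| \geq M$ one has $m \in A \Leftrightarrow m + e \in A$.

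The set $A$ is nonempty (by hypothesis) and closed under addition and negation, so $A = n' \mathbb{Z}$ for some $n' \geq 1$. To conclude, I would invoke the eventual $e$-periodicity above: for $|m|$ sufficiently large, $m \in n'\mathbb{Z} \Leftrightarrow m + e \in n'\mathbb{Z}$, which forces $e \in n'\mathbb{Z}$, i.e.\ $n' \mid e$ and hence $n' \leq e$. The main content of the argument is the passage from $\phi^n(D) = D$ to $(\phi^*)^n[D] = [D]$ via rigidity, which reduces the statement to the uniform Skolem-Mahler-Lech theorem; no further geometric input is required, and I do not anticipate any serious obstacle beyond verifying the final elementary translation from semilinear period to subgroup index.
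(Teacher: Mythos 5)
Your proof is correct and takes essentially the same route as the paper: pass to the induced linear automorphism $\phi^*$ of $N^1(X)\cong\mathbb{Z}^{\rho(X)}$, apply Corollary~\ref{uniformmsl} with $x=y=[D]$ to bound the period of $\{n : (\phi^*)^n[D]=[D]\}$ in terms of $\rho(X)$ alone, and use rigidity to identify this set with $\{n:\phi^n(D)=D\}$. Your extra step deducing $n'\mid e$ from the subgroup structure of this set just fills in a detail the paper leaves implicit.
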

\begin{proof}
Since \(\phi^\ast\) and its inverse are both given by invertible integer matrices, their determinant is \(\pm 1\).  Corollary \ref{uniformmsl} provides an integer $e$ depending only on $\rank N^1(X)$ such that \((\phi^\ast)^{n^\prime}([D]) = [D]\) for some \(n^\prime \leq e\).  Since \(D\) is the only divisor with class \([D]\), it must be that \(\phi^n(D) = D\).
\end{proof}

Lemma \ref{boundperiods} does not follow from the bounds on the periods in Theorem~\ref{nonreducedmordelllang}, since the bounds there depend on the field of definition of \(\phi\).  The strategy is instead to conclude the statement from the corresponding statement for the linear map \(\phi^\ast : N^1(X) \to N^1(X)\). 

\thmautomorphisms*
\begin{proof}
We denote the sequence of blow-ups used in constructing \(Y\) as follows.
\[
\xymatrix{
  Y = X_n \ar[r]^-{\pi_{n-1}} &  X_{n-1} \ar[r]^-{\pi_{n-2}} & \cdots \ar[r] & X_1 \ar[r]^-{\pi_0} &  X_0 = X
}
\]

Let \(E_j \subset X_j\) be the exceptional divisor of \(X_j \to X_{j-1}\), and \(\tilde{E}_j \subset X\) its strict transform.  There are two cases: 
\begin{enumerate}
\item all of the divisors \(\tilde{E}_j\) are periodic under \(\phi\), and 
\item some \(\tilde{E}_j\) is not.
\end{enumerate}

As each \(\tilde{E}_j\) is rigid, by Lemma~\ref{boundperiods}, there is a constant \(e(X)\) so that if \(\tilde{E}_j\) is periodic under \(\phi\), it has period dividing \(e(X)\).  We claim that in case (1) the iterate \(\phi^N\) descends to an automorphism of \(X\), where 
\[
N = e(X) \prod_{i=1}^n \rho(E_n)
\]
It follows from Lemma~\ref{boundperiods}, there is a constant \(n_0 = n_0(X)\) such that \(\phi^{r}(E_j) = E_j\) for some \(r\) less than \(n_0\).  Hence replacing \(\phi\) by \(\phi^r\), we may assume that each  divisor \(\tilde{E}_j\) is  invariant under \(\phi\).  Now, since \(E_n \subset X_n\) is invariant under \(\phi : X_n \to X_n\), the iterate \(\phi^{\rho(E_n)}\) descends to an automorphism \(\phi_{n-1} : X_{n-1} \to X_{n-1}\) by Lemma~\ref{periodicexceptional}.  Continuing in this manner, we conclude that \(\phi^N\) descends to an automorphism of \(X\), as claimed.

It remains only to consider case (2), in which some \(\tilde{E}_j\) is not periodic under \(\phi\). We will see that this case is impossible due to hypothesis on the structure of the exceptional divisor.

Let \(j\) be the largest index for which \(E_j\) has infinite orbit.  By the argument above, \(\phi^N\) descends to an automorphism \(\phi : X_j \to X_j\).  Replace \(Y\) by \(X_j\), and let \(\pi : X_j \to X_{j-1}\) be the contraction of \(E_j\), for which we now write \(E\).  Then the maps \(\pi \circ \phi^{-n} : X_j  \to X_{j-1}\) give an infinite set of contractions, all with distinct exceptional divisors \(\phi^n(E)\).  In the case  \(2r + 3 \leq n\), this is an immediate contradiction by Lemma~\ref{cantintersect}.

If instead \(r+3 \leq n\) and \(\Nef(E)\) is polyhedral, applying Lemma~\ref{cantintersect} as before, we conclude that \(\phi^n(E) \cap E\) is a union of the finitely many components \(W_i \subset E\), independent of \(n\).  This means that the set \(V(E) = \bigcup_{n \neq 0} E \cap \phi^n(E)\) is either empty or contains only the components of \(W\).  By Theorem~\ref{mainblowup}, it must be that \(H^0(E,N_{E/X}) > 0\).  This is a contradiction, since \(E\) is the exceptional divisor of a blow-up.
\end{proof}

\corblowupauto*
\begin{proof}
Since all the blow-ups are assumed to be along points and curves, they satisfy condition (2) of Theorem~\ref{generalblowup}: \(n=4\) and \(r \leq 1\).  Moreover, each \(E\) has Picard rank at most \(2\) and the polyhedrality of \(\Nef(E)\) is immediate.
\end{proof}

\corboundedautos*

\begin{proof}

Write $Y=\on{Bl}_Z(X)$ and let $G\subset \underline{\on{Aut}}(X)$ be
the closed subgroup scheme consisting of automorphisms of $X$ that
preserve $Z$.  By assumption, $\underline{\on{Aut}}(X)$ is of finite
type, so the same is true for $G$.  Let $M=\#|G/G^0|$, where $G^0$ is
the connected component of the identity in $G$.

Note that $G$ is naturally a subgroup scheme of
$\underline{\on{Aut}}(Y)$.  By Theorem \ref{generalblowup}, there exists an
integer $N$ such that for any $\phi\in \on{Aut}(Y)$, $\phi^N\in G$;
thus $\phi^{NM}$ is in the identity component of
$\underline{\on{Aut}}(Y)$.  In particular,
$\underline{\on{Aut}}(Y)/\underline{\on{Aut}}^0(Y)$ is a group of
bounded exponent.

Thus by Lemma \ref{burnside}, the natural map $$\on{Aut}(Y)\to
\GL(H^*(Y, \mathbb{C}))$$ has finite image.  Let $\on{Aut}_\omega(Y)$
be the kernel of this map.  By a well-known result of Lieberman--Fujiki
\cite[Proposition 2.2]{lieberman}, $\on{Aut}^0(Y)$ has finite index in
$\on{Aut}_\omega(Y)$, which we have just shown has finite index in
$\on{Aut}(Y)$.  Hence $\on{Aut}(Y)$ has finite component group, as
desired.

\end{proof}

\begin{lemma}\label{burnside}

Let $H\subset \GL_n(k)$ be a group of bounded exponent, with $k$ a
field of characteristic zero.  Then $H$ is finite.

\end{lemma}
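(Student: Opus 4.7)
The plan is to reduce to the case that $H$ acts irreducibly on $k^n$ and then deduce finiteness via Burnside's density theorem for matrix algebras, exploiting the fact that any element of finite order in $\GL_n(k)$ has trace equal to a sum of roots of unity and therefore takes only finitely many values.

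For the reduction, I would argue by induction on $n$. If $H$ preserves a proper subspace, choose a composition series $0 = V_0 \subset \cdots \subset V_m = k^n$ of $k^n$ as an $H$-module. Each $V_i/V_{i-1}$ is an irreducible $H$-representation of smaller dimension, still of exponent dividing $N$, so by induction the image of $H$ in each $\GL(V_i/V_{i-1})$ is finite. The kernel $K$ of the product map $H \to \prod_i \GL(V_i/V_{i-1})$ consists, in a suitable basis, of block-upper-triangular unipotent matrices $I+M$ with $M$ nilpotent; in characteristic zero such a matrix has finite order only when $M=0$, since $(I+M)^k = I$ would force $kM + \binom{k}{2}M^2 + \cdots = 0$, which is impossible for $k \neq 0$ and $M \neq 0$ as seen from the topmost nonzero superdiagonal. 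Hence $K$ is trivial and $H$ is finite.

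In the irreducible case, I would invoke Burnside's density theorem to conclude that the $k$-linear span of $H$ inside $M_n(k)$ equals $M_n(k)$, and then pick a $k$-basis $h_1, \ldots, h_{n^2} \in H$. Consider the map $\tau : H \to k^{n^2}$ defined by $\tau(h) = (\on{tr}(h h_i))_{i=1}^{n^2}$. For each $h \in H$ and each $i$, the product $h h_i$ lies in $H$, so its eigenvalues (over $\ol k$) are $N$-th roots of unity, whence $\on{tr}(h h_i)$ is a sum of $n$ such roots of unity; there are only finitely many such sums, so $\tau(H)$ is finite. The map $\tau$ is also injective: $\tau(h) = \tau(h')$ forces $\on{tr}((h-h')h_i) = 0$ for every $i$, and since the $h_i$ span $M_n(k)$ and the trace pairing on $M_n(k)$ is nondegenerate, $h = h'$. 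Combining, $H$ is finite.

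The only nontrivial ingredient is the application of Burnside's density theorem in the irreducible case; the rest is elementary linear algebra, and no serious obstacle is anticipated.
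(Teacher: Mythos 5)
Your argument is correct in substance, but it follows a genuinely different route from the paper's: you have reconstructed Burnside's classical solution of the bounded-exponent problem for linear groups (a composition series to reduce to the irreducible case, handling the kernel via the fact that a unipotent matrix of finite order in characteristic zero is trivial, then Burnside density plus the observation that traces of elements whose eigenvalues are $N$-th roots of unity take only finitely many values, with nondegeneracy of the trace form giving injectivity). The paper instead passes to the Zariski closure $\overline{H}$ of $H$ in $\GL_n$: the $N$-th power map is a morphism equal to the constant map $1$ on the dense subset $H$, hence on all of $\overline{H}$, so the kernel of $\overline{H}(k[\epsilon]/\epsilon^2)\to \overline{H}(k)$ --- a $k$-vector space, hence torsion-free in characteristic zero --- has exponent dividing $N$ and is therefore trivial; thus $\overline{H}$ is zero-dimensional and finite. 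The algebro-geometric route is shorter and isolates exactly where characteristic zero enters (the Lie algebra is torsion-free); yours is more elementary, avoids group schemes entirely, and makes explicit the two separate uses of characteristic zero (no nontrivial unipotent elements of finite order, and separability of $x^N-1$).

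One point to patch: Burnside's density theorem requires the representation to be absolutely irreducible, equivalently $k$ algebraically closed. A composition series over a non-closed field only produces factors irreducible over $k$, for which the $k$-span of $H$ need only be a matrix algebra over a division ring (e.g.\ $\mathbb{C}\subset M_2(\mathbb{R})$ acts irreducibly but does not span). This is harmless: replace $k$ by $\ol{k}$ at the outset, since $H\subset \GL_n(k)\subset \GL_n(\ol{k})$ and finiteness may be checked there. With that one line added, your proof is complete.
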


\begin{proof}

Let $\overline{H}$ be the Zariski closure of $H$ in $\GL_n$.
$\overline{H}$ is a subgroup scheme of $\GL_n$ which is of bounded
exponent in the sense that if $N$ is the exponent of $H$, we have that
the map

$$
\xymatrix{
\overline{H} \ar[rr]^{x\mapsto x^N} & &\overline{H}
}
$$

is the same as the constant map $x\mapsto 1$.  But now consider the
group $$T:=\ker(\overline{H}(k[\epsilon]/\epsilon^2)\to
\overline{H}(k)).$$  This is a torsion-free group (indeed a $k$-vector
space) of exponent $N$; hence it must equal zero.  Thus $\overline{H}$
is zero-dimensional, hence finite.

\end{proof}

\section{Bounds on intersection multiplicities}
\label{highercodimstuff}

The applications in the preceding sections have focused on intersections \(D \cap \phi^n(D)\) of a divisor with its own iterates under an automorphism of a variety.  In fact, many of the same arguments can be applied to intersections with  smaller dimensional varieties.  In the case that the intersection is \(0\)-dimensional, this is a result of Arnold.

\begin{lemma}
\label{maximalcontainment}
Let \(\phi : X \to X\) be an automorphism of a smooth  variety, and let \(V \subset X\) be a subvariety with \(\phi(V) = V\).  Suppose that \(Y\) and \(Z\) are two irreducible closed subvarieties of \(X\) containing \(V\) and with $\codim_X(Z)=1$, \(\codim_X Y + \codim_X Z = \codim_X V\). Suppose further that $Y$ is regular at the generic point of $V$. 

Then there exists some \(K\) such that \(\mathscr I_V^K \subseteq \mathscr I_Y + \mathscr I_{\phi^n(Z)}\) in $\mathscr{O}_{X, V}$ is satisfied for all nonzero \(n\) such that  \(\codim_{\eta_V} Y \cap \phi^n(Z) = \codim_{\eta_V} V\).  (Here $\eta_V$ is the generic point of $V$ and $\mathscr{O}_{X, V}$ is the local ring of $X$ at the generic point of $V$.)

Furthermore, the set of $n$ such that \(\codim_{\eta_V} Y \cap \phi^n(Z) \neq \codim_{\eta_V} V\) is semilinear.
\end{lemma}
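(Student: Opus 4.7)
The plan is to translate the containment $\mathscr{I}_V^K \subseteq \mathscr{I}_Y + \mathscr{I}_{\phi^n(Z)}$ into a bound on an intersection multiplicity in a DVR, and then feed a suitable family of thickenings of $V$ inside $Y$ into Theorem~\ref{nonreducedmordelllangpds}(1) to obtain a period bound that is uniform in the thickening parameter.

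First I would set up the local algebra. Write $A := \mathscr{O}_{X,V}$, a regular local ring of dimension $\codim V$. Because $Y$ is regular at $\eta_V$ and $\codim V - \codim Y = \codim Z = 1$, the quotient $R := A/\mathscr{I}_{Y,V} = \mathscr{O}_{Y,\eta_V}$ is a one-dimensional regular local ring, i.e.\ a DVR; let $t$ be a uniformizer. The image $\bar{I}_n \subseteq R$ of $\mathscr{I}_{\phi^n(Z),V}$ is either zero or $(t^{k_n})$ for a unique $k_n \geq 1$, and the codimension hypothesis $\codim_{\eta_V}(Y\cap \phi^n(Z)) = \codim V$ is exactly the statement $\bar{I}_n \neq 0$, i.e.\ $k_n < \infty$. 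Reducing $\mathscr{I}_V^K \subseteq \mathscr{I}_Y + \mathscr{I}_{\phi^n(Z)}$ modulo $\mathscr{I}_{Y,V}$ identifies it with $(\bar{\mathfrak{m}}_V)^K \subseteq (t^{k_n})$, i.e.\ $k_n \leq K$. So claim (1) reduces to the statement that the finite values $k_n$ are uniformly bounded in $n$.

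The key step is the following application of non-reduced Mordell--Lang. For each $K \geq 0$, let $F_K \subset X$ be defined by $\mathscr{I}_Y + \mathscr{I}_V^{K+1}$, an infinite family of the exact shape appearing in Theorem~\ref{nonreducedmordelllangpds}(1) (with $Y_1^\circ = Y$, $Y_2^\circ = V$). Locally at $\eta_V$, $F_K \subseteq \phi^n(Z)$ iff $\mathscr{I}_{\phi^n(Z),V} \subseteq \mathscr{I}_{Y,V} + \mathfrak{m}_V^{K+1}$, which after passing to $R$ becomes $k_n > K$. Moreover $\mathscr{I}_{Y,V} + \mathfrak{m}_V^{K+1}$ is $\mathfrak{m}_V$-primary in $A$ because $R/(t^{K+1})$ is Artinian, so each $F_K$ has a unique associated point whose closure contains $\eta_V$, namely $\eta_V$ itself; in particular the whole family $\{F_K\}$ contributes only the single point $\eta_V$ to the hypothesis on $\Xi$. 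Feeding $\phi^{-1}$, the family $\{F_K\}$, the target $Z$, and the point $x = \eta_V$ into Theorem~\ref{nonreducedmordelllangpds}(1) --- with $\phi^{-1}$ used to convert $F_K \subseteq \phi^n(Z)$ into the forward shape $\phi^{-n}(F_K) \subseteq Z$ --- yields a single $N$, independent of $K$, such that $B_K := \{n : k_n > K\}$ is $N$-periodic semilinear for every $K$.

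The rest is pigeonhole. For each residue $c \in \mathbb{Z}/N\mathbb{Z}$, the intersection $(c + N\mathbb{Z}) \cap B_K$ is finite or cofinite in $c + N\mathbb{Z}$; let $C_K \subseteq \mathbb{Z}/N\mathbb{Z}$ denote the set of cofinite residues. Since $B_K$ is decreasing in $K$, the $C_K$ form a decreasing chain of subsets of the finite set $\mathbb{Z}/N\mathbb{Z}$, and stabilize at some $C_\infty$ beyond a finite stage $K_0$; for $K \geq K_0$ the set $B_K$ differs from $\bigcup_{c\in C_\infty}(c+N\mathbb{Z})$ by a decreasing sequence of finite sets. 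Any $n$ of large absolute value in $\bigcup_{c\in C_\infty}(c+N\mathbb{Z})$ therefore lies in every $B_K$, forcing $k_n = \infty$; every other $n$ for which $k_n$ is finite lies in one of finitely many exceptional values, on which $k_n$ is automatically bounded. This produces the uniform $K$ claimed in (1). Claim (2) is then immediate: $\{n : \codim_{\eta_V}(Y\cap\phi^n(Z)) \neq \codim V\} = \{n : k_n = \infty\} = \{n : \mathscr{I}_{\phi^n(Z),V} \subseteq \mathscr{I}_{Y,V}\} = \{n : \phi^{-n}(Y) \subseteq Z \text{ in }\Spec A\}$, semilinear by another direct application of Theorem~\ref{nonreducedmordelllangpds}(1) to the single subscheme $Y$ and the automorphism $\phi^{-1}$. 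The main technical point I expect to need care with is aligning the Mordell--Lang hypotheses --- identifying the correct thickening family, orienting the containment so the theorem applies, and verifying the uniform ``finitely many associated points above $\eta_V$'' condition; once these are in place, the combinatorial stabilization is essentially pigeonhole.
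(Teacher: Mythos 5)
Your proposal is correct and follows essentially the same route as the paper: both apply Theorem~\ref{nonreducedmordelllangpds}(1) to the family of thickenings of \(V\) inside \(Y\) cut out by \(\mathscr I_Y+\mathscr I_V^{k}\) to get a period \(N\) independent of \(k\), use that \(\mathscr O_{Y,V}\) is a DVR to translate the containments into the valuations \(k_n\), and conclude by stabilization of the decreasing chain of \(N\)-periodic semilinear sets (the paper invokes the descending chain condition where you unpack it residue-class by residue-class). Your explicit verification of the associated-points hypothesis and the reorientation via \(\phi^{-1}\) are fine and do not change the argument.
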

\begin{proof}
Let $V_k$ be the scheme defined by $\mathscr{I}_V^k+\mathscr{I}_Y$.  By Theorem \ref{nonreducedmordelllangpds}, the set $$A_k:=\{n\mid \phi^n(V_k)\subset Z\}$$ is semilinear of period $N$, independent of $k$.  Let $$A_\infty=\bigcap_k A_k.$$  By Lemma \ref{semilinearintersect}, $A_\infty$ is also semilinear of period $N$.

As semilinear sets of period $N$ satisfy the descending chain condition, there exists $K$ such that $A_K=A_\infty$.  By the regularity of $Y$ at the general point of $V$, and the fact that $V$ is codimension one in $Y$ (by the hypothesis on $Z$), $\mathscr{O}_{Y, V}$ is a discrete valuation ring. Hence the localization of $Y\cap \phi^n(Z)$ to the generic point of $V$ is either $\Spec(\mathscr{O}_{Y, V})$ or $\Spec(\mathscr{O}_{Y,V}/\mathfrak{m}_V^r)$ for some $r$.

$A_\infty$ is the set of $n$ such that  the localization of $Y\cap \phi^n(Z)$ to the generic point of $V$ is $\Spec(\mathscr{O}_{Y, V})$, or equivalently $\codim_{\eta_V} Y \cap \phi^n(Z) \neq \codim_{\eta_V} V$, proving the last claim of the lemma. 

Likewise $A_k$ is the set of $n$ such that the localization of $Y\cap \phi^n(Z)$ to the generic point of $V$ is contained in $\Spec(\mathscr{O}_{Y, V}/\mathfrak{m}_V^k)$, which is contained in $\Spec(\mathscr{O}_{X, V}/\mathfrak{m}_V^k)$.  Hence \(\mathscr I_V^K \subseteq \mathscr I_Y + \mathscr I_{\phi^n(Z)}\) in $\mathscr{O}_{X, V}$ for all nonzero \(n\) such that  \(\codim_{\eta_V} Y \cap \phi^n(Z) = \codim_{\eta_V} V\), as desired.
\end{proof}

\begin{theorem}[cf.\ \cite{arnold}]\label{arnoldtheorem}
Let \(\phi : X \to X\) be an automorphism of a smooth  variety, and let \(V \subset X\) be a subvariety with \(\phi(V) = V\).  Suppose that \(Y\) and \(Z\) are two Cohen-Macaulay subvarieties of \(X\) containing \(V\), with $\text{codim}(Z)=1, Y$ regular at the generic point of $V$, and with \(\codim_X Y + \codim_X Z = \codim_X V\).  Then 
\[
\set{\mult_V (Y \cap \phi^n(Z)) }
\]
is uniformly bounded in \(n\), among \(n\) with \(V\) a component of \(Y \cap \phi^n(Z)\) (i.e.\ those \(n\) for which this intersection has the expected dimension along \(V\)).
\end{theorem}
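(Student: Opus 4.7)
The plan is to combine Lemma~\ref{maximalcontainment} with the standard fact that, for a proper intersection of Cohen--Macaulay subvarieties, the intersection multiplicity along $V$ is just the length of the local ring of the scheme-theoretic intersection at $V$; the uniform ideal containment coming from the lemma will then translate directly into a uniform length bound.

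First, for those nonzero $n$ at which $V$ is a component of $Y \cap \phi^n(Z)$ of the expected codimension, the local ring
\[
R_n := \mathscr{O}_{X,V}/(\mathscr{I}_Y + \mathscr{I}_{\phi^n(Z)})
\]
is Artinian (since $V$ is an isolated component of the intersection after localizing at its generic point). I would identify
\[
\mult_V(Y \cap \phi^n(Z)) = \len_{\mathscr{O}_{X,V}} R_n.
\]
The Cohen--Macaulay hypothesis on $Y$ and $\phi^n(Z)$, together with the fact that they meet properly along $V$, forces the higher $\Tor_i^{\mathscr{O}_{X,V}}(\mathscr{O}_Y,\mathscr{O}_{\phi^n(Z)})$ to vanish, so Serre's intersection multiplicity reduces to $\len R_n$.

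Next, I would apply Lemma~\ref{maximalcontainment}. Its hypotheses are satisfied verbatim: $Y$ and $Z$ are irreducible subvarieties containing $V$, $Z$ has codimension one, $Y$ is regular at the generic point of $V$, and the codimensions add. The lemma supplies a constant $K$, depending on $Y$ and $Z$ but independent of $n$, such that
\[
\mathfrak m_V^K \;=\; \mathscr{I}_V^K \;\subseteq\; \mathscr{I}_Y + \mathscr{I}_{\phi^n(Z)} \quad \text{in } \mathscr{O}_{X,V}
\]
for every $n$ in our set. Consequently $R_n$ is a quotient of $\mathscr{O}_{X,V}/\mathfrak m_V^K$.

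Finally, since $X$ is smooth, $\mathscr{O}_{X,V}$ is a regular local ring of dimension $c = \codim_X V$, so
\[
\len_{\mathscr{O}_{X,V}}\bigl(\mathscr{O}_{X,V}/\mathfrak m_V^K\bigr) \;=\; \binom{K+c-1}{c}
\]
is a fixed finite number. Combining the two previous steps,
\[
\mult_V(Y \cap \phi^n(Z)) \;=\; \len R_n \;\leq\; \binom{K+c-1}{c}
\]
uniformly in the relevant $n$, which is what we wanted. In this argument the only substantive input is Lemma~\ref{maximalcontainment}, whose uniform $K$ is itself the output of the non-reduced Mordell--Lang result (Theorem~\ref{nonreducedmordelllangpds}); the remaining steps are bookkeeping, with the mild subtlety being the identification of $\mult_V$ with $\len R_n$, which is where the Cohen--Macaulay hypothesis enters.
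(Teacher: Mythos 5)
Your proposal is correct and follows essentially the same route as the paper: use the Cohen--Macaulay hypothesis to reduce Serre's Tor formula to $\len_{\cO_{X,V}}\cO_{X,V}/(\mathscr I_{Y,V}+\mathscr I_{\phi^n(Z),V})$, then bound this uniformly by the length of $\cO_{X,V}/\mathfrak m_V^K$ via the uniform containment from Lemma~\ref{maximalcontainment}. The only difference is cosmetic: you make the bound explicit as $\binom{K+c-1}{c}$, which the paper leaves as $\len_{\cO_{X,V}}\cO_{X,V}/\mathfrak m_V^K$.
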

\begin{proof}
For \(n\) for which the intersection is dimensionally correct, the intersection multiplicity along \(V\) is defined by
\[
\mult_V(Y,\phi^n(Z)) = \sum_{i \geq 0} (-1)^i \len_{\cO_{X, V}} \Tor_i^{\cO_{X, V}}(\cO_{Y, V}, \cO_{\phi^n(Z), V})
\]

However, by the Cohen-Macaulay assumption on \(Y\) and \(Z\), all terms except that with \(i=0\) vanish, so that 
\[
\mult_V(Y,\phi^n(Z)) = \len_{\cO_{X,V}} \cO_{X,V} / (\mathscr I_{Y,V} + \mathscr I_{\phi^n(Z),V})
\]
Since by Lemma \ref{maximalcontainment}, there exists $k$ such that
\[
\mathfrak m_V^k \subset \mathscr I_{Y,V} + \mathscr I_{\phi^n(Z),V}
\]
for all nonzero \(n\) such that $Y\cap Z$ has the expected dimension at $V$, we have
\[
\mult_V(Y,\phi^n(Z)) = \len_{\cO_{X,V}} \cO_{X,V} / (\mathscr I_{Y,V} + \mathscr I_{\phi^n(Z),V}) \leq 
\len_{\cO_{X,V}} \cO_{X,V} / \mathfrak m_V^k,
\]
a bound independent of \(n\).  This completes the proof.
\end{proof}

It may happen that there are some values of \(n\) for which \(Y \cap \phi^n(Z)\) has larger than expected dimension, so that \(M_n\) does not compute the multiplicity.  Lemma \ref{maximalcontainment} shows that this set is semilinear.  However, if $Z$ is no longer a divisor, we do not know how to prove an analogue of Theorem \ref{arnoldtheorem} except in the case $\dim(V)=0$ (the case covered by Arnol'd). Furthermore, we do not know how to control the set of $n$ such that \(Y \cap \phi^n(Z)\) has larger than expected dimension.
\begin{question}
Suppose $\codim(Y)>1$.  Is the set of \(n\) for which \(Y \cap \phi^n(Z)\) has larger than expected dimension along \(V\) a semilinear set?
\end{question}

\section{Acknowledgements}

We are grateful to Serge Cantat, Roland Roeder, Paul Reschke, and Kevin Tucker for useful conversations.  The first author was supported by NSF Research Training Group grant \#DMS-1246844.  The second author was supported by an NSF Postdoctoral Fellowship.

\singlespacing
\bibliographystyle{amsplain}
\bibliography{refs}

\end{document}